\newcommandx\tedclusterr[2][1=n,2=\dhinterseq]{{\widetilde{\boldsymbol\xi}}^*_{#1,#2}} % cluster empirical
\newcommand\TEDclusterrandomr{\widehat{\boldsymbol\xi}^*}%%%%%% Sets %%%%%%%%%%%%%%%%%%ùù
\newcommand{\ra}[1]{\renewcommand{\arraystretch}{#1}}
\newcommand{\remarkend}{\hfill $\oplus$}
\newcommandx{\conditiondhsumpaper}[2]{\ensuremath{\mathcal{S}(#1,#2)}} % condition for
\newcommandx\tedclustersl[2][1=n,2=\dhinterseq]{{\widetilde{\boldsymbol\mu}}^*_{#1,#2}} % cluster empirical
\newcommand\TEDclusterrandomsl{\widehat{\boldsymbol\mu}^*} % TED for cluster functionals blocks estimator random levels
\newcommand\statinterseqn{k_n}
\newtheorem{theorem}{Theorem}[section]
\newtheorem{proposition}[theorem]{Proposition}
\newtheorem{lemma}[theorem]{Lemma}
\newtheorem{definition}[theorem]{Definition}
\newtheorem{hypothesis}[theorem]{Assumption}
\theoremstyle{remark}
\newtheorem{remark}[theorem]{Remark}
\crefname{hypothesis}{assumption}{assumptions}
\crefname{lemma}{Lemma}{Lemmas}
\crefname{theorem}{Theorem}{Theorems}
\crefname{proposition}{Proposition}{Propositions}
\crefname{exercise}{Exercise}{Exercises}
\crefname{corollary}{Corollary}{Corollaries}
\numberwithin{equation}{section}
\newcommandx\TEP[1][1=]{\mathbb{G}^{#1}} % Univariate TEP
\newcommand\tepcluster{{\mathbb{G}}}
\newcommandx\tedcluster[2][1=n,2=\dhinterseq]{{\widetilde{\boldsymbol\nu}}^*_{#1,#2}} % cluster empirical measure
\newcommandx\tedclusterindep[2][1=n,2=\dhinterseq]{{\widetilde{\boldsymbol\nu}}^\indep_{#1,#2}} % TED for cluster functionals block estimator, independent blocks
\newcommandx\tedclustermdep[2][1=n,2=\dhinterseq]{{\widetilde{\boldsymbol\nu}}^{*(m)}_{#1,#2}} % TED for cluster functionals block estimator m-dep approximation - using \tedcluster with ${\cdot}^{(m)}$ creates a problem
\newcommand\TEDclusterrandom{\widehat{\boldsymbol\nu}^*} % TED for cluster functionals blocks estimator random levels
\newcommandx\tedclusterrandom[2][1=n,2=\dhinterseq]{{\widehat{\boldsymbol\nu}}^*_{#1,#2}}
\newcommand{\ANSJB}{{\rm ANSJB}}
\newcommand\constant{\mathrm{cst}}
\newcommandx\envelope[1][1={\mathbf H}]{{\mathbf {#1}}}
\newcommand\canditheta{{\vartheta}}
\newcommandx\stoploss[1][1={\rm stoploss}]{\theta_{#1}}
\newcommandx\stoplossest[1][1={{\rm stoploss},n}]{\widetilde\theta_{#1}}
\newcommandx\stoplossestk[1][1={{\rm stoploss},n,\statinterseq}]{\widehat\theta_{#1}}
\newcommandx\largedeviation[1][1={\rm largedev}]{\theta_{#1}}
\newcommandx\largedeviationest[1][1={{\rm largedev},n}]{\widetilde\theta_{#1}}
\newcommandx\largedeviationestk[1][1={{\rm largedev},n,\statinterseq}]{\widehat\theta_{#1}}
\newcommandx\ruin[1][1={\rm ruin}]{\theta_{#1}}
\newcommandx\ruinest[1][1={{\rm ruin},n}]{\widetilde\theta_{#1}}
\newcommandx\ruinestk[1][1={{\rm ruin},n,\statinterseq}]{\widehat\theta_{#1}}
\newcommandx\clfunc[1][1=h]{#1} % notation for cluster functionals
\newcommandx\anticl[2][2=\dhinterseq]{\conditionS(#1,#2)}
\newcommandx\anticlpsi[3][1=\tepseq,2=\dhinterseq,3=\psi]{\conditionS(#1,#2,#3)}
\newcommandx{\norm}[2][1=]{\left|#2\right|_{#1}} %%%% arbitrary norm on a euclidean space
\newcommandx{\lpnorm}[3][1=,3=]{\|#2\|_{#1}^{#3}}
\newcommandx{\matrixnorm}[2][2=]{\left\|#1\right\|_{#2}} %%%% matrix norm
\newcommandx{\matrixnormseries}[3][2=,3=]{\left\|#1\right\|_{#2,#3}} %%%% matrix norm
\newcommandx{\linftynorm}[2][1=]{\|#2\|_{#1}} %%%% supnorm in $\ellinfty$ of $A$
\newcommandx{\anynorm}[2][2=]{\left|#1\right|_{#2}} %%%% supnorm
\newcommandx{\supnormclass}[3][3=]{\left\|#1\right\|_{#2}^{#3}} %%%% supnorm on a function class (for entropy)
\newcommandx{\sphere}[2][1=]{\mathbb{S}_{#1}^{#2}}
\newcommandx{\oball}[3][3=]{B_{#3}(#1,#2)} %%%%% open ball with center #1 and radius #2  ; #3 is the norm
\newcommandx{\cball}[3][3=]{\overline{B}_{#3}(#1,#2)} %%%%% closed ball with center #1 and radius #2  ; #3 is the norm
\newcommandx{\coball}[3][3=]{B_{#3}^c(#1,#2)} %%%%% complement of the open ball with center #1 and radius #2
\newcommandx{\ccball}[3][3=]{\overline{B}_{#3}^c(#1,#2)} %%%%% complement of the closed ball with center #1 and radius #2
\newcommandx\cone[1][1=C]{\mathcal{#1}}
\newcommandx\conej[1][1=\mathbf{j}]{\mathcal{C}_{#1}}   %%%%% typical cone
\newcommandx{\coneindex}[2][2=\mathcal{C}]{#1_#2}
\newcommand\Nset{\mathbb{N}}
\newcommand\Zset{\mathbb{Z}}
\newcommand\Rset{\mathbb{R}}
\newcommandx\borel[1][1=\csms]{\mathcal{B}(#1)}   %% the borel sigma-field of $\Rset^{#1}$
\newcommand{\bszero}{{\boldsymbol0}}
\newcommand\indep{\dag}  %%% notation for independent blocks in beta-mixing blocking method; * was bad; \dag is ugly
\newcommand\bsQ{\boldsymbol{Q}}
\newcommand\bsx{\boldsymbol{x}}
\newcommand\bsX{\boldsymbol{X}}
\newcommand\bsy{\boldsymbol{y}}
\newcommand\bsY{\boldsymbol{Y}}
\newcommand\bsTheta{\boldsymbol{\Theta}}
\newcommand\bsnu{\boldsymbol{\nu}}
\newcommand\tailmeasure{\bsnu} %%%% the tail measure of a stationary time series
\newcommand\tailmeasurestar{{\bsnu}^*} %%%% the "center" of the tail measure of a stationary time series in terms of $Q$
\newcommand\ltwotmsmetric{\boldsymbol{\rho}^*} %%%% $L^2(\nu^*)$ metric
\newcommand\exc{\mce}  %%%% number of exceedences over 1
\newcommand\anchor{\mca} %%%%%%% generic notation for anchoring sequences
\newcommandx{\numult}[2][2=]{\bsnu_{\boldsymbol{#1}_{#2}}} %%%% same with an optional argument for index of vector
\newcommand\backest\Upsilon
\newcommandx{\nualphap}[2][1=\alpha,2=p]{\nu_{#1,#2}}  %%%% mesure of regular variation in dimension 1
\newcommandx\numultcondi[2][2=]{\boldsymbol{\nu}_{\boldsymbol{#1}_{#2}}}
\newcommandx\chain[1][1=Y]{\mathbb{#1}}
\newcommandx{\scalingseq}[1][1=n]{c_{#1}}  %Scaling in multivariate RV
\newcommandx{\scalingfunction}[1][1=]{c#1} %Scaling in multivariate RV
 \newcommandx{\absquantileseq}[2][1=]{a_{#1#2}}
\newcommandx{\dhinterseq}[1][1=n]{r_{#1}}  % the intermediate sequence in condition dh
\newcommandx{\dhinterseqsmall}[1][1=n]{\ell_{#1}}  % the intermediate small block sequence (related to condition dh)
\newcommandx{\dinterseq}[1][1=n]{r_{#1}}  % the intermediate sequence in condition d
\newcommandx{\tepseq}[1][1=n]{u_{#1}}  % the sequence $u_n$ to define the TEP
\newcommandx{\interseq}[1][1=n]{k_{#1}}  % intermediate sequence in TEP
\newcommandx{\scalingseqcone}[1][1=n]{c_{#1}} %Scaling sequence for MRV on cone
\newcommandx{\scalingseqhidden}[1][1=n]{\tilde{c}_{#1}}
\newcommandx{\scalingfunctionhidden}[1][1=]{\tilde{c}{#1}}
\newcommandx{\scalingseqcev}[1][1=n]{c^*_{#1}}
\newcommand\conditionS{\ensuremath{\mathcal{S}}} % condition for mean cluster size and for covariance of ordinary TEP
\newcommandx{\conditiondh}[2][1=\dhinterseq,2=\scalingseq]{\ensuremath{\mathcal{A}\mathcal{C}(#1,#2)}} % condition for tail process to tend to 0
\newcommandx{\conditionANSJB}[1][1=\scalingseq]{\mathrm{ANSJB}(\dhinterseq,#1)}
\newcommandx{\conditiondhsum}[1][1=\scalingseq]{\ensuremath{\mathcal{S}(\dhinterseq,#1)}} % condition for convergence of covariance of TEP
\newcommandx{\conditiondhsumW}[1][1=\scalingseq]{\ensuremath{\mathcal{SW}(\dhinterseq,#1)}} % condition for convergence of covariance of TEP
\newcommand\conditionR{\ensuremath{\mathcal{R}}} % $r_n\bar{F}(u_n)\to0$
\newcommand\convdistr{\stackrel{\mbox{\tiny\rm d}}{\longrightarrow}} % weak convergence of random variables
\newcommand\convprob{\stackrel{\tiny \mathbb{P}}{\longrightarrow}}
\newcommand\vaguelysharp{vaguely$^\#$}
\newcommandx\prohodistsym[1][1=]{\rho_{#1}}
\newcommandx\prohodist[3][3=]{\rho_{#3}(#1,#2)}
\newcommand\rmd{\mathrm{d}} % Variable in integrals
\newcommand\esp{\mathbb E}
\newcommand\pr{\mathbb P}
\newcommand\var{\mathrm{var}}
\newcommand\cov{\mathrm{cov}}
\newcommandx{\autocov}[1][1=]{\gamma_{#1}}
\newcommandx{\cdfnorm}[1][1=\bsX]{H_{#1}}
\newcommandx{\tailcdfnorm}[1][1=\bsX]{\overline{H}_{#1}}
\newcommand\ind[1]{\mathbbm{1}{\left\{#1\right\}}}
\newcommand\mca{\mathcal A}
\newcommand\mcb{\mathcal B}
\newcommand\mce{\mathcal E}
\newcommand\mcf{\mathcal F}
\newcommand\mcg{\mathcal G}
\newcommand\mch{\mathcal H}
\newcommand\mci{\mathcal I}
\newcommand\mck{\mathcal K}
\newcommand\mcl{\mathcal L}
\newcommand\bbR{\mathbb{R}}
\newcommand\bbZ{\mathbb{Z}}
\newcommandx\test[2][1=X]{{#1}_{#2}}
\newcommandx\orderstat[3][1=X]{{#1}_{(#2:#3)}}
\newcommand\statinterseq{k}
\newcommandx{\sequence}[3][2=\Zset,3=j]{\{#1_{#3},#3\in#2\}}
\newcommandx{\sequenceshort}[2][2=j]{\{#1_#2\}}
\newcommandx\sequ[3][2=j,3=\mathbb{Z}]{\{#1_#2,#2\in#3\}}
\newcommandx\sequnorm[3][3=j,2=\mathbb{Z}]{\{\norm{#1_#3},#3\in#2\}}
\newcommandx\sequnormq[4][2=,4=j,3=\mathbb{Z}]{\{\norm{#1_#4}^{#2},#4\in#3\}}
\newcommandx\uncompactd[2][1=d]{(\overline{\Rset}^{#1})^{#2}\setminus\{\boldsymbol0\}}
\newcommandx{\barrsetproduct}[2][1=d]{(\overline{\Rset}^{#1})^{#2}}
\newcommandx{\rsetproduct}[2][1=d]{(\Rset^{#1})^{#2}}
\newcommand{\metricspace}{\csms}
\newcommand\spaceD{\mathbb{D}}  %%% cadlag functions
\newcommandx\csms[1][1=E]{\mathsf{#1}}   %%% a generic complete separable metric space
\newcommandx\borelcsms[1][1=E]{\mathcal{#1}}   %%% the borel sigma-field of the generic complete separable metric space
\newcommandx\mplusx[1][1=]{\mathcal{M}#1}  %%%% the borel measures
\newcommandx\mplusxp[1][1=]{\mathcal{N}{#1}} %%% point measures : with an $N$ as in kallenberg 17
\newcommandx\mplusxpb[1][1=\borelcsms]{\mathcal{N}_{pb}({#1})}  %%% point measures with bounded points
\newcommandx\mplusxpone[1][1=\borelcsms]{\mathcal{N}_{p1}({#1})}  %%% point measures with largest point with modulus 1
\newcommandx\mplusxpeps[1][1=\borelcsms]{\mathcal{N}_{p\epsilon}({#1})}  %%% point measures with one point with modulus greater than $\epsilon$.
\newcommandx\mplusxf[1][1=\borelcsms]{\mathcal{M}_f({#1})}
\newcommandx\mplusxpf[1][1=\borelcsms]{\mathcal{N}_{pf}({#1})} %%%% finite point measures
\newcommandx\mplusxps[1][1=\borelcsms]{\mathcal{N}_{ps}({#1})} %%%% simple point measures
\newcommandx\mplusxpS[1][1=\borelcsms]{\mathcal{N}_{pS}({#1})} %%%%  point measures  with summable points
\newcommandx\mplusxpsc[1][1=\borelcsms]{\mathcal{N}_{psc}({#1})} %%%%  point measures with jumps of constant sign
\newcommand\distance{\mathrm{d}}  %%%% generic distance
\newcommandx\metric[1][1=\metricspace]{\distance_{#1}} %%%% metric for a general metric space $\metricspace$
\newcommandx\metricmcg{\rho} %%%% metric for the index class $\cg$
\newcommandx\bracknum[3][2=\mch]{N_{[\,]}(#1,#2,#3)} % not needed anymore
\newcommandx\bracknumarray[2][2=\mch]{N_{[\,]}(#1,#2,L^2_n)} % bracketing number for arrays - in particular for tep % not needed anymore
\newcommandx\entropynum[3][3=\mch]{N(#1,#3,#2)} % not needed anymore
\newcommandx\process[1][1=X]{\mathbb{#1}}
\newcommandx\hillest[3][1=n,3=]{\widehat{\gamma}_{#1,#2}^{#3}}
\newcommandx\hillmoment[2][1=n]{\widehat{\gamma}_{#1,#2}^{(M)}}
\newcommand\lzero{\ell_0}
\newcommand\tildelzero{\tilde{\ell}_0}
\newcommand\lone{\ell_1}
\newcommandx\lalpha[1][1=\alpha]{\ell_{#1}}
\newcommand\shift{B}
\newcommand\backshift{B}
\newcommand\iid{i.i.d.}
\newcommand\wrt{with respect to}
\newcommand\ie{i.e.}
\newcommand\semimetric{semi-metric}
\newcommand\rhs{right-hand side}
\newcommand\realvalued{real-valued}
\newcommand\nonnegative{non-negative}
\newcommand{\nonzero}{non zero}
\newcommand\shiftinvariant{shift-invariant}
\newcommand\Qseq{conditional spectral tail process}
\begin{document}
\title{Estimation of cluster functionals for regularly varying time series: runs estimators}

\author{Youssouph Cissokho\thanks{University of Ottawa}\and Rafa{\l} Kulik\thanks{University of Ottawa} }

\date{\today}
\maketitle

\begin{abstract}
Cluster indices describe extremal behaviour of stationary time series.
We consider runs estimators of cluster indices. Using a modern theory of multivariate, regularly varying time series, we obtain central limit theorems under conditions that can be easily verified for a large class of models. In particular, we show that blocks and runs estimators have the same limiting variance.
\end{abstract}

\section{Introduction}
Consider
a stationary, regularly varying $\Rset^d$-valued time series $\bsX=\sequence{\bsX}$.
We are interested in its extremal behaviour. A classical approach to this problem is to calculate the \textit{extremal index}. If $|\cdot|$ is an arbitrary norm on $\Rset^d$, then the extremal index $\theta$ (if exists) of $\{|\bsX_j|,j\in\Zset\}$ is defined as a parameter in the limiting distribution of the maxima. With $Q$ being the quantile function of $|\bsX_0|$ and $a_n=Q(1-1/n)$ we have
\begin{align*}
\lim_{n\to\infty}\pr(a_n^{-1}\max_{j=1,\ldots,n}\{|\bsX_1|,\ldots,|\bsX_n|\}\leq x)=\exp(-\theta x^{-\alpha})\;, \ \ x>0\;.
\end{align*}
The parameter $\theta\in (0,1]$ indicates the amount of clustering, with $\theta=1$ (the case of extremal independence) meaning no-clustering of large values.

The extremal index is just one parameter that describes clustering of extremes.
Informally speaking, it arises as the limit
$$
\lim_{n\to\infty}\frac{\esp[H((\bsX_{1},\ldots,\bsX_{\dhinterseq})/\tepseq)]}{\dhinterseq \pr(\norm{\bsX_0}>\tepseq)}\;,
$$
for the particular choice of function $H:(\Rset^d)^{\Zset}\to\Rset$, and a suitable choice of the scaling sequence $\tepseq\to\infty$ and the block size $\dhinterseq\to\infty$.  (Formally speaking, $(\bsX_{1},\ldots,\bsX_{\dhinterseq})$ is a random element of $(\Rset^d)^{\dhinterseq}$, while the domain of $H$ is $(\Rset^d)^{\Zset}$. This inconsistency will be explained later).

In particular, the extremal index is achieved by applying a suitable functional to a cluster:
\begin{align*}
H((\bsX_{1},\ldots,\bsX_{\dhinterseq})/\tepseq)=\ind{
	\max\{\norm{\bsX_{1}},\ldots,\norm{\bsX_{\dhinterseq}}\}>\tepseq}\;.
\end{align*}
That is,
\begin{align}\label{eq:extremal-index-1}
\theta=\lim_{n\to\infty}\frac{\pr(\max\{\norm{\bsX_{1}},\ldots,\norm{\bsX_{\dhinterseq}}\}>\tepseq)}{\dhinterseq \pr(\norm{\bsX_0}>\tepseq)}\;.
\end{align}
Informally speaking, a cluster is a triangular array $(\bsX_1/\tepseq,\ldots,\bsX_{\dhinterseq}/\tepseq)$ with $\dhinterseq,\tepseq\to\infty$ that converges in distribution in a certain sense. Cluster indices are obtained by applying the appropriate  functional $H$ to the cluster.
The functionals are defined on $(\Rset^d)^\Zset$, {the space of $\Rset^d$-valued sequences,} and are such that their values do not depend on coordinates that are equal to zero.
More precisely, for $\bsX=\{\bsX_j,j\in\Zset\}\in (\Rset^d)^\Zset$ and $i\leq j\in\Zset$,  we denote $\bsX_{i,j}=(\bsX_i,\ldots, \bsX_j)\in (\Rset^d)^{(j-i+1)}$.
Then, we identify $H(\bsX_{i,j})$ with
$H((\bszero,\bsX_{i,j},\bszero))$, where $\bszero\in (\Rset^d)^\Zset$ is the zero sequence. Such functionals $H$ will be called \textit{cluster functionals}.

Let $\norm{\cdot}$ be an arbitrary norm on $\Rset^d$ and $\{\tepseq\}$, $\{\dhinterseq\}$ be such that
\begin{align}\label{eq:rnbarFun0}
\lim_{n\to\infty}\tepseq=\lim_{n\to\infty}\dhinterseq =\lim_{n\to\infty}n\pr(\norm{\bsX_0}>\tepseq) = \infty\;, \nonumber\\
\lim_{n\to\infty}\dhinterseq/n=\lim_{n\to\infty}\dhinterseq\pr(\norm{\bsX_0}>\tepseq) = 0\;.
\tag{$\conditionR(\dhinterseq,\tepseq)$}
\end{align}
Given a cluster functional $H$ on $(\Rset^d)^\Zset$, we want to estimate the limiting
quantity
\begin{align}
\tailmeasurestar(H)=  \lim_{n\to\infty}  \tailmeasurestar_{n,\dhinterseq}(H)=\lim_{n\to\infty} \frac{\esp[H(\bsX_{1,\dhinterseq}/\tepseq)]}{\dhinterseq \pr(\norm{\bsX_0}>\tepseq)}\;.  \label{eq:thelimitwhichisnolongercalledbH}
\end{align}
To guarantee existence of the limit we will require additional anticlustering assumptions on the time series $\sequence{\bsX}$. For $\bsx=\{\bsx_j,j\in\Zset\}\in(\Rset^d)^\Zset$ define $\bsx^*=\sup_{j\in\Zset}|\bsx_j|$. The cluster indices of interest are, among others:
\begin{itemize}
	\item the extremal index obtained with $H_1(\bsx)=\ind{\bsx^*>1}$, $\bsx=\{\bsx_j,j\in\Zset\}\in(\Rset^d)^\Zset$;
	\item the cluster size distribution obtained with
	\begin{align}\label{eq:h2}
	H_2(\bsx)=\ind{\sum_{j\in\Zset}\ind{\norm{\bsx_j}>1}=m}\;, \ \   \bsx=\{\bsx_j,j\in\Zset\}\in(\Rset^d)^\Zset\;, \ \  m\in\Nset \;;
	\end{align}
	\item the stop-loss index of a univariate time series obtained with
	\begin{align}\label{eq:h3}
	H_3(\bsx) = \ind{\sum_{j\in\Zset} (x_j-1)_+ > \eta}\;, \ \    \bsx=\{\bsx_j,j\in\Zset\}\in\Rset^\Zset\;, \ \    \eta>0\;;
	\end{align}
	\item the large deviation index of a univariate time series obtained with
	\begin{align}\label{eq:h4}
	H_4(\bsx)=\ind{K(\bsx)>1}\;, \ \  K(\bsx) = \left(\sum_{j\in\Zset} x_j\right)_+\;, \ \
	\bsx=\{\bsx_j,j\in\Zset\}\in\Rset^\Zset\;;
	\end{align}
	\item the ruin index of a univariate time series obtained with
	\begin{align}\label{eq:h5}
	H_5(\bsx)=\ind{K(\bsx)>1}\;, \ \   K(\bsx) = \sup_{i\in\Zset} \left(\sum_{j\leq i}
	x_j\right)_+\;, \ \ \bsx=\{\bsx_j,j\in\Zset\}\in\Rset^\Zset\;.
	\end{align}
\end{itemize}
As indicated above, the extremal index is the classical quantity that arises in the extreme value theory for dependent sequences. Similarly, the cluster size distribution has been studied in \cite{hsing:1991} and \cite{drees:rootzen:2010}.
The large deviation index was studied under the name \textit{cluster index} in \cite{mikosch:wintenberger:2013,mikosch:wintenberger:2014}. It quantifies the effect of dependence in large deviations results.

Several methods of estimation of the limit $\tailmeasurestar(H)$ in \eqref{eq:thelimitwhichisnolongercalledbH} may be
employed. The natural one is to consider a
statistics based on disjoint blocks of size $\dhinterseq$, cf. \cite{drees:rootzen:2010} and \cite{kulik:soulier:2020},
\begin{align*}
\tedcluster(H):= \frac{1}{n\pr(\norm{\bsX_0}>\tepseq)}   \sum_{i=1}^{m_n}
H(\bsX_{(i-1)\dhinterseq+1,i\dhinterseq}/\tepseq) \;,
%\label{eq:blocktype}
\end{align*}
where $m_n=[n/\dhinterseq]$ is the number of disjoint blocks.
The data-based estimator is constructed as follows. Let $\statinterseqn\to\infty$ be a sequence of integers and define $\tepseq$ by   $\statinterseqn=n\pr(\norm{\bsX_0}>\tepseq)$. Let $\orderstat[\norm{\bsX}]{n}{1}\leq \cdots \leq \orderstat[\norm{\bsX}]{n}{n}$ be order statistics from $\norm{\bsX_1},\ldots,\norm{\bsX_n}$.
Define
\begin{align}\label{eq:feasible-estimator-of-cluster-index-consistency}
\TEDclusterrandom_{n,\dhinterseq}(H)
&:= \frac{1}{\statinterseqn} \sum_{i=1}^{m_n} H(\bsX_{(i-1)\dhinterseq+1,i\dhinterseq}/\orderstat[\norm{\bsX}]{n}{n-\statinterseqn}) \; .
\end{align}
The general asymptotic theory for disjoint blocks estimators was developed in \cite{drees:rootzen:2010}. See also \cite[Chapter 10]{kulik:soulier:2020}. The limiting variance of the disjoint blocks estimator can be represented as
\begin{align}\label{eq:limiting-variance-disjoint}
\tailmeasurestar& (\{H-\tailmeasurestar(H)\exc\}^2)\;,
\end{align}
where $\exc(\bsx)=\sum_{j\in\Zset}\ind{\norm{\bsx_j}>1}$. This result was established (implicitly) in \cite{drees:rootzen:2010}, but the form of the limiting variance is again given in \cite[Chapter 10]{kulik:soulier:2020}.

Another approach to estimation of $\tailmeasurestar(H)$ is to consider the sliding blocks statistics
\begin{align}\label{eq:sliding-block-estimator-nonfeasible-1}
\tedclustersl(H):=\frac{1}{q_n \dhinterseq \pr(\norm{\bsX_0}>\tepseq)}\sum_{i=0}^{q_n-1}H\left(\bsX_{i+1,i+\dhinterseq}/\tepseq\right)\;
\end{align}
and
and the corresponding estimator defined in terms of order statistics:
\begin{align}\label{eq:sliding-block-estimator-feasible}
\TEDclusterrandomsl_{n,\dhinterseq}(H)=\frac{1}{\dhinterseq\statinterseqn}
\sum_{i=0}^{q_n-1}H\left(\bsX_{i+1,i+\dhinterseq}/\orderstat[\norm{\bsX}]{n}{n-\statinterseqn}\right)\;.
\end{align}
Here, $q_n=n-\dhinterseq-1$ is the number of sliding blocks.
In \cite{drees:neblung:2020} the authors used the framework of \cite{drees:rootzen:2010} and showed that the limiting variance of the sliding blocks estimator never exceeds that of the disjoint blocks estimator. In case of the extremal index, both variances were proven to be equal. In \cite{cissokho:kulik:2021} it was shown that the limiting variances for both disjoint and sliding blocks estimators agree and are given by the expression in \eqref{eq:limiting-variance-disjoint} for an arbitrary choice of $H$. We note at this point that the methodology used in \cite{drees:rootzen:2010,drees:neblung:2020,kulik:soulier:2020,cissokho:kulik:2021} fits into Peak Over Threshold (PoT) framework. On the other hand, in the Block Maxima (BM) framework, sliding blocks estimators yield typically smaller variance; see \cite{bucher:segers:2018disjoint,bucher:segers:2018sliding}. As of this moment, there is no thorough explanation of these phenomena and no formal comparison between PoT and BM framework. See \cite{ferreira:dehaan:2015} for some partial results and \cite{bucher:zhou:2018} for a recent review.

In the present paper we are interested in the so-called \textit{runs estimators}. In the context of the extremal index, this approach goes back to \cite{weissman:novak:1998} and stems from the following representation of the extremal index:
\begin{align}\label{eq:extremal-index-2}
\theta=\lim_{n\to\infty}\pr(\max\{\norm{\bsX_{1}},\ldots,\norm{\bsX_{\dhinterseq}}\}\leq\tepseq\mid \norm{\bsX_0}>\tepseq)\;.
\end{align}
We note that
\begin{align*}
\theta=\lim_{n\to\infty}\frac{1}{\pr(\norm{\bsX_0}>\tepseq)}
\esp[\ind{\anchor(({\bsX_{0}},\ldots,\bsX_{\dhinterseq})/\tepseq)=0}\ind{\norm{\bsX_0}>\tepseq}]\;,
\end{align*}
where
\begin{align*}
\anchor(\bsx)=
\sup\{j:\norm{\bsx_j}>1\}\;
\end{align*}
gives the position of the last exceedence above 1 in a particular block.
Recall again the convention
$\anchor(({\bsX_{0}},\ldots,\bsX_{\dhinterseq})/\tepseq)
=\anchor((\bszero,{\bsX_{0}},\ldots,\bsX_{\dhinterseq})/\tepseq)$. Then, $\anchor$ is an example of so-called \textit{anchoring map}. Special cases of anchoring maps were considered in  \cite{hashorva:2018} and \cite{basrak:planinic:2018}, while in \cite{kulik:soulier:2020} their connection to cluster indices $\tailmeasurestar(H)$ was thoroughly investigated.
It turns out that with an arbitrary choice of the anchoring map $\anchor$ we have
\begin{align*}
\tailmeasurestar(H)=\esp[H^{\anchor}(\bsY)]\;,
\end{align*}
where
\begin{align*}%\label{eq:HC-function}
H^{\anchor}(\bsx)=H(\bsx)\ind{\anchor(\bsx)=0}\ind{\norm{\bsx_0}>1}\;.
\end{align*}
This motivates the following runs statistics:
\begin{align}
&\tedclusterr(H^{\anchor})=\frac{1}{n\pr(\norm{\bsX_0}>\tepseq)}
\sum_{i=\dhinterseq+1}^{n-\dhinterseq}H^{\anchor}\left(\bsX_{i-\dhinterseq,i+\dhinterseq}/\tepseq\right)\label{eq:process-xii}\;.
\end{align}
Indeed,
under the appropriate conditions,
\Cref{lem:tailprocesstozero} gives
\begin{align*}
&\lim_{n\to\infty}\esp[\tedclusterr(H^{\anchor})]
%\lim_{n\to\infty}\frac{1}{\pr(\norm{\bsX_0}>\tepseq)}\esp\left[H^{\anchor}(\bsX_{-\dhinterseq,\dhinterseq}/\tepseq)\right]
=\esp[H^{\anchor}(\bsY)]=\tailmeasurestar(H)\;.
\end{align*}
The data-based
runs estimator is then
\begin{align*}
&\TEDclusterrandomr_{n,\dhinterseq}(H^{\anchor})=\frac{1}{\statinterseq_n}
\sum_{i=\dhinterseq+1}^{n-\dhinterseq}H^{\anchor}\left(\bsX_{i-\dhinterseq,i+\dhinterseq}/\orderstat[\norm{\bsX}]{n}{n-\statinterseq}\right)\;.
\end{align*}
The main result of this paper is \Cref{thm:sliding-block-clt-1}, the asymptotic normality of the appropriately normalized estimator
$\TEDclusterrandomr_{n,\dhinterseq}(H^{\anchor})$. We show, in particular, that the limiting variance agrees with the one for the disjoint blocks and sliding blocks estimators; cf. \cite{drees:rootzen:2010}, \cite[Chapter 10]{kulik:soulier:2020}, \cite{cissokho:kulik:2021}.
Furthermore, we prove that we cannot achieve variance reduction by considering a linear combination of runs estimators with a different choice of anchoring maps $\anchor$ and $\tilde\anchor$. Indeed, it turns out that $\tedclusterr(H^{\anchor})$ and $\tedclusterr(H^{\tilde\anchor})$ are totally dependent in the limit.
We note in passing that even though general ideas of proofs are similar to those of \cite{cissokho:kulik:2021}, however, technicalities are significantly different. Differences stem primarily from conditioning on $\{\norm{\bsX_j}>\tepseq\}$ used in case of the runs estimators.

Thus, from the theoretical point of view the limiting behaviour of all (disjoint blocks, sliding blocks, runs) estimators is the same. However, for finite samples a bias has to be taken into account. We note first that the theoretical finite-sample bias for both disjoint and sliding blocks estimators is the same. This can be also seen in extensive simulation studies in \cite{cissokho:kulik:2021}. On the other hand, we were not able to get an useful formula for the bias in the runs estimator case. As such we relied on simulations. It turns out that runs estimator are typically heavily biased when estimation of the extremal index is concerned. However, the runs estimators may have an advantage when other cluster indices are considered.

The paper is structured as follows. \Cref{sec:prel} contains definitions, notation and preliminary results on convergence of clusters. It is primarily based on \cite[Chapters 5 and 6]{kulik:soulier:2020}, with some results from \cite{basrak:segers:2009}, \cite{basrak:planinic:soulier:2018}, \cite{planinic:soulier:2018}. \Cref{sec:estimators} defines runs pseudo-estimators and estimators. The main result of the paper is the central limit theorem for runs estimators in
\Cref{thm:sliding-block-clt-1}. We note again that the limiting variance agrees with the one for disjoint and sliding blocks estimators. Simulations are performed in \Cref{sec:simulations}, while all the proofs are contained in \Cref{sec:proofs}.

\section{Preliminaries}\label{sec:prel}
In this section we fix the notation and introduce the relevant classes of functions. In \Cref{sec:tail-process} we recall the notion of the tail and the spectral tail process (cf. \cite{basrak:segers:2009}).
\Cref{sec:anchoring} introduces anchoring maps (cf. \cite{basrak:planinic:2018}, \cite{hashorva:2018}).
In \Cref{sec:cluster-index} we define cluster indices. We refer to \cite[Chapter 5]{kulik:soulier:2020} for more details. In \Cref{sec:weakconv-cluster-measure} we discuss convergence of the cluster measure, following
\cite[Chapter 6]{kulik:soulier:2020}.

The most important conclusion of these preliminaries is a representation of the cluster index $\tailmeasurestar(H)$ (cf. \eqref{eq:thelimitwhichisnolongercalledbH}) as $\esp[H^{\anchor}(\bsY)]$, with $H^{\anchor}$ defined in
\eqref{eq:HC-function} and $\bsY$ being the tail process. Also,  \Cref{lem:tailprocesstozero} on conditional weak convergence and \Cref{theo:cluster-RV,theo:limit-lone-Q}
on unconditional weak convergence play a central role in the rest of the paper.
\subsection{Notation}
Let $|\cdot|$ be a norm on $\Rset^d$.
For a sequence $\bsx=\{\bsx_j,j\in\Zset\}\in (\Rset^d)^\Zset$ and $i\leq j\in\Zset\cup\{-\infty,\infty\}$ we denote $\bsx_{i,j}=(\bsx_i,\ldots,\bsx_j)\in (\Rset^d)^{j-i+1}$, $\bsx_{i,j}^*=\max_{i\leq l\leq j}|\bsx_l|$ and $\bsx^*=\sup_{j\in\Zset}|\bsx_j|$. By $\bszero$ we denote the zero sequence; its dimension can be different in each of its occurrences.

By $\lzero(\Rset^d)$ we denote the set of $\Rset^d$-valued sequences which tend to zero at infinity. Likewise, $\lone(\Rset^d)$
consists of sequences such that $\sum_{j\in\Zset}|\bsx_j|<\infty$.

%We will use the blocking method. If $\bsX$ is a time series of interest, then  $(\bsX_1^\indep,\dots,\bsX_n^\indep)$ is a pseudo-sample
%  such that the blocks $(\bsX_{(i-1)\dhinterseq+1}^\indep,\dots,\bsX_{i\dhinterseq}^\indep)$,
%  $i=1,\ldots,m_n=[n/\dhinterseq]$, are mutually independent with the same distribution as the original block
%  $(\bsX_1,\dots,\bsX_{\dhinterseq})$.

\subsection{Classes of functions}\label{sec:classes}
Functionals $H$ are defined on $\lzero(\Rset^d)$ with the convention $H(\bsx_{i,j})=H((\bszero,\bsx_{i,j},\bszero))$. For $s>0$, the function $H_s:(\Rset^d)^\Zset\to\Rset$ is defined by $H_s(\bsx)=H(\bsx/s)$.
We consider the following classes:
\begin{itemize}
	\item $\mcl$ is the class of bounded \realvalued\ functions defined on $(\Rset^d)^\Zset$ that are either
	Lipschitz continuous \wrt\ the uniform norm or almost
	surely continuous with respect to the distribution of the tail process $\bsY$.
	This class includes functions like $\ind{\bsx^*>1}$, $\ind{\sum_{j\in\Zset}|\bsx_j|>1}$.
	See Remark 6.1.6 in \cite{kulik:soulier:2020}.
	
	\item $\mca\subset\mcl$ is the class of \shiftinvariant\ functionals with support separated
	from $\bszero$. In particular, for $H\in \mca$, $H(\bszero)=0$. The class $\mca$ includes $\ind{\bsx^*>1}$.
	\item $\mck$ is the class of shift-invariant functionals $K:(\Rset^d)^\Zset\to\Rset$ defined on
	$\lone(\Rset^d)$ such that $K(\bszero)=0$ and which are
	Lipschitz continuous with constant $L_K$, \ie\,
	\begin{align*}
	|K(\bsx) - K(\bsy)| \leq L_K \sum_{j\in\Zset} \norm{\bsx_j-\bsy_j}\;, \ \ \bsx,\bsy\in\lone(\Rset^d) \; . % \label{eq:hypo-K}
	\end{align*}
	\item $\mcb\subset\mcl$ is the class of functionals $H$ of the form $H=\ind{K>1}$,
	where $K\in\mck$.
	Functionals in $\mcb$ may have support which is not
	separated from $\bszero$. The typical example is $H(\bsx)=\ind{\sum_{j}|\bsx_j|>1}$; note that $H\not\in\mca$.
\end{itemize}
We will also need
the map $\exc$ is defined on $\lzero(\Rset^d)$ by
$\exc(\bsx) = \sum_{j\in\Zset} \ind{\norm{\bsx_j}>1}$. Note that $\exc$ is shift-invariant, with the support separated from zero, but is not bounded.

\subsection{Tail and spectral tail process}\label{sec:tail-process}
Let $\bsX=\sequence{\bsX}$ be a stationary, regularly varying time series with values in $\Rset^d$ and tail index $\alpha$. In particular,
\begin{align*}
\lim_{x\to\infty}\frac{\pr(|\bsX_0|> tx)}{\pr(|\bsX_0|> x)}=t^{-\alpha}
\end{align*}
for all $t>0$.
Then, there exists a sequence $\bsY=\sequence{\bsY}$ such that
\begin{align*}
%\label{eq:def-numultprob}
\pr(x^{-1}(\bsX_i,\dots,\bsX_j) \in \cdot \mid |\bsX_0|>x)   \mbox{ converges weakly to }  \pr((\bsY_i,\dots,\bsY_j) \in \cdot)
\end{align*}
as $x\to\infty$ for all $i\leq j\in\Zset$.
We call $\bsY$ the tail process.
See \cite{basrak:segers:2009}.
We note that, in particular, $|\bsY_0|$ has Pareto distribution with the density $\alpha x^{-\alpha-1}$, $x>1$. As such, it follows automatically that $\bsY^*=\sup_{j\in\Zset}|\bsY_j|>1$.
Equivalently, viewing $\bsX$ and $\bsY$ as random elements with values in $(\Rset^d)^\Zset$, we have for every bounded or
\nonnegative\ functional~$H$ on $(\Rset^d)^{\Zset}$, continuous \wrt\ the product topology,
\begin{align*}
\lim_{x\to\infty} \frac{\esp[H(x^{-1}\bsX)\ind{\norm{\bsX_0}>x}]} {\pr(\norm{\bsX_0}>x)}
& =  \esp[H(\bsY)] \; .
\end{align*}
The spectral tail process $\sequence{\bsTheta}$ is defined by $\bsTheta=\norm{\bsY_0}^{-1}\bsY$ and is independent of the tail process $\bsY$.
\subsection{Anchoring maps}\label{sec:anchoring}
\begin{definition}[Anchoring map]
	A measurable map $\anchor:(\Rset^d)^\Zset\to \Zset\cup\{-\infty,\infty\}$ is called an anchoring map if the following two properties hold:
	\begin{itemize}
		\item[{\rm An(i)}:] $\anchor(\bsx)=j$ implies $\norm{\bsx_j}\geq \norm{\bsx_0}\wedge 1$;
		\item[{\rm An(ii)}:]  $\anchor(B\bsx)=\anchor(\bsx)+1$, where $B$ is a backsift operator.
	\end{itemize}
\end{definition}
%%%%%%%%%%%%%%%%%%%%%%%
%% no Lipschitz continuity
%Take $(\bsx_j)_{j\not=-1,0}=\bszero$, $\bsx_{-1}=1-\delta/2$, $\bsx_0=1$; $(\bsy_j)_{j\not=-1,0}=\bszero$, %$\bsy_0=1$, $\bsy_{-1}=1+\delta/2$. Then $\anchor_0(\bsx)=0$, $\anchor_0(\bsy)=-1$ and
%$\ind{\anchor_0(\bsx)=0}=1$, $\ind{\anchor_0(\bsy)=0}=0$, while $\|\bsx-\bsy\|_{\infty}=\delta$. So, the map %$\bsx\to \ind{\anchor(\bsx)=0}$ is not Lipschitz continuous.
%%%%%%%%%%%%%%%%%%%%%%%%%%%%%%5
Three basic examples of anchoring maps are:
\begin{itemize}
	\item The infargmax functional:
	$\anchor^{(0)}(\bsy)=\inf\{j:\bsy_{-\infty,{j}}^*=\bsy^*\}$;
	\item
	The first exceedence above one: $\anchor^{(1)}(\bsx)=\inf\{j:\norm{\bsx_j}>1\}$;
	\item
	The last exceedence above one: $\anchor^{(2)}(\bsx)=\sup\{j:\norm{\bsx_j}>1\}$.
\end{itemize}
In what follows we use the convention $\inf\emptyset=+\infty$. We note that $\anchor^{(0)}$ is $0$-homogeneous, while $\anchor_s^{(1)}(\bsx)=\anchor^{(1)}(\bsx/s)$ and $\anchor_s^{(2)}(\bsx)=\anchor^{(2)}(\bsx/s)$ are increasing and decreasing in $s$, respectively, but they are not $0$-homogeneous. This will play a role in the proofs.

A special importance is given to the time index 0.  In particular,
\begin{itemize}
	\item
	If $\anchor^{(0)}(\bsx)=0$, then $\bsx_{-\infty,-1}^*<\norm{\bsx_0}$ and $\bsx_{1,\infty}^*\leq \norm{\bsx_0}$;
	\item
	If $\anchor^{(1)}(\bsx)=0$, then $\bsx^*_{-\infty,-1}\leq 1$ and $\norm{\bsx_0}> 1$;
	\item
	If $\anchor^{(2)}(\bsx)=0$, then $\bsx^*_{1,\infty}\leq 1$ and $\norm{\bsx_0}> 1$.
\end{itemize}
Applying an anchoring map to a finite block, say $\bsx_{-r,r}$ with $r\in\Nset$, is equivalent to applying it $\bsx=(\bszero,\bsx_{-r,r},\bszero)$. For example, $\anchor^{(0)}(\bsx_{-r,r})=0$ means that $\bsx_{-r,-1}^*<\norm{\bsx_0}$ and $\bsx_{1,r}^*\leq \norm{\bsx_0}$. This in turn implies also that $\anchor^{(0)}(\bsx_{-s,s})=0$ for $0<s<r$. Similarly,
\begin{align*}%\label{eq:anchor-inclusion}
\anchor(\bsx_{-r,r})=0 \Rightarrow \anchor(\bsx_{-s,s})=0\;, \ \ 0<s<r
\end{align*}
for $\anchor=\anchor^{(1)},\anchor^{(2)}$. However, we do not know if this important property (used explicitly in the proofs) holds for any anchoring map. As such, in the paper we focus on the three anchoring maps introduced above.

Furthermore, note that An(ii) gives that
\begin{align}
\anchor(\bsx_{h-r,h+r})=h \Leftrightarrow \anchor(\backshift^{-h}\bsx_{-r,r})=0\;. \label{eq:anchor-shift}
\end{align}
%Equivalently,
%\begin{align}
%\anchor(\backshift^h\bsx_{h-r,h+r})=h \Leftrightarrow \anchor(\bsx_{-r,r})=0\;. %\label{eq:anchor-shift-1}
%\end{align}
Indeed, consider for example $\anchor^{(0)}$. Then $\anchor^{(0)}(\bsx_{h-r,h+r})=h$ means that
$\bsx^*_{h-r,{h}-1}<\norm{\bsx_h}$ and $\bsx_{h+1,h+r}\leq \norm{\bsx_h}$. Set $\tilde\bsx=\backshift^{-h}\bsx$, so that $\tilde\bsx_{-r}=\bsx_{h-r}$, $\tilde\bsx_0=\bsx_h$ and $\tilde\bsx_r=\bsx_{h+r}$. Thus, $\tilde\bsx_{-r,-1}<\norm{\tilde\bsx_0}$ and $\tilde\bsx_{1,r}\leq\norm{\tilde\bsx_0}$. This in turn means that
$\anchor^{(0)}(\widetilde\bsx_{-r,r})=\anchor^{(0)}(\backshift^{-h}\bsx_{-r,r})=0$.

\subsection{Cluster measure and cluster indices}\label{sec:cluster-index}
Let $\anchor$ be an anchoring map.
If  $\pr(\anchor(\bsY)\notin\Zset)=0$ then we can define
\begin{align}
\label{eq:canditheta-anchor}
\canditheta = \pr(\anchor(\bsY)=0) \; .
\end{align}
We want to emphasize that $\canditheta$ does not depend on the choice of
the anchoring map (see \cite{planinic:soulier:2018} and \cite[Theorem 5.4.2]{kulik:soulier:2020}). In particular,
\begin{align*}
\canditheta=\pr(\anchor^{(1)}(\bsY)=0)=\pr\left(\sup_{j\leq -1}|\bsY_j|\leq 1\right)=\pr(\anchor^{(2)}(\bsY)=0)=\pr\left(\sup_{j\geq 1}|\bsY_j|\leq 1\right)\;.
\end{align*}
The above identity follows from the time-change formula, see \cite[Section 7.1]{cissokho:kulik:2021}.
Therefore, $\canditheta$ can be recognized as the (candidate) extremal index. It becomes the usual extremal index under additional mixing and anticlustering conditions (cf. Section 7.5 in \cite{kulik:soulier:2020}).

Recall that $\exc(\bsx)=\sum_{j\in\Zset}\ind{\norm{\bsx_j}>1}$.
The property An(i) of the anchoring maps implies
\begin{align}\label{eq:summability-of-Y}
\sum_{h\in\Zset}\pr(\anchor(\bsY)=h)\leq \sum_{h\in\Zset}\pr(\norm{\bsY_h}>1)\;.
\end{align}
By \cite[Lemma 9.2.3]{kulik:soulier:2020} the latter series is finite if an appropriate anticlustering condition holds (see \ref{eq:conditionS} to be introduced later on).
\begin{definition}[Cluster measure]
	\label{def:clustermeasure}
	Let $\bsY$ and $\bsTheta$ be the tail process and the spectral tail process, respectively, such that $\pr(\lim_{|j|\to\infty} \bsY_j=\bszero)=1$. The {cluster measure} is the measure $\tailmeasurestar$ on
	$\lzero(\Rset^d)$ defined by
	\begin{align}
	\label{eq:def-tailmeasurestar-premiere}
	\tailmeasurestar    = \canditheta \int_0^\infty \esp[\delta_{r\bsTheta}\ind{\anchor(\bsTheta)=0}] \alpha r^{-\alpha-1} \rmd r   \; .
	\end{align}
\end{definition}
The measure $\tailmeasurestar$ is
boundedly finite on $(\Rset^d)^\Zset\setminus\{\bszero\}$, puts no mass at $\bszero$ and is
$\alpha$-homogeneous.

Furthermore, the cluster measure can be expressed in terms of another sequence.
\begin{definition}
	\label{def:sequence-Q}
	Assume that $\pr(\anchor(\bsY)\notin\Zset)=0$. The \Qseq\ $\bsQ$ is a random sequence with the distribution of
	$(\bsY^*)^{-1}\bsY$ conditionally on $\anchor(\bsY)=0$.
\end{definition}
The sequence $\bsQ$ appeared implicitly in the seminal
paper \cite{davis:hsing:1995}.
See also \cite{basrak:segers:2009}, \cite[Definition 3.5]{planinic:soulier:2018} and \cite[Chapter 5]{kulik:soulier:2020}. An abstract setting is considered in \cite{dombry:hashorva:soulier:2018}.

Note for example that $\anchor^{(0)}(\bsY)=0$ gives $\bsY^*=\norm{\bsY_0}$. Thus, \eqref{eq:def-tailmeasurestar-premiere} and the definition of $\bsQ$ give for a bounded or \nonnegative\ measurable function $H$ on $\lzero(\Rset^d)$ (see Definition 5.4.11 in \cite{kulik:soulier:2020}),
\begin{align*}%\label{eq:seq-Q}
\tailmeasurestar(H) & =
\vartheta \int_0^\infty \esp[H(r\bsQ)]  \alpha r^{-\alpha-1} \rmd r
\; .
\end{align*}
If moreover $H$ is such that $H(\bsy)=0$ if
$\bsy^*\leq\epsilon$ for one $\epsilon>0$, then
\begin{align}
\label{eq:relation-cluster-Y-Q-Theta-epsilon}
\tailmeasurestar(H) = \epsilon^{-\alpha} \esp[H(\epsilon\bsY) \ind{\anchor(\bsY)=0}] \; .
\end{align}
For a \shiftinvariant\ $H:(\Rset^d)^\Zset\to \Rset$ and an anchoring map $\anchor$ define
\begin{align}\label{eq:HC-function}
H^{\anchor}(\bsx)=H(\bsx)\ind{\anchor(\bsx)=0}\ind{\norm{\bsx_0}>1}\;.
\end{align}
Thus, since $\norm{\bsY_0}>1$, if $H$ is such that $H(\bsy)=0$ whenever
$\bsy^*\leq 1$, then \eqref{eq:relation-cluster-Y-Q-Theta-epsilon} gives
\begin{align}\label{eq:HA-on-Y}
\tailmeasurestar(H)=\esp[H^{\anchor}(\bsY)]\;.
\end{align}
Note that the $\tailmeasurestar(H)$ does not agree with $\esp[H(\bsY)]$.
\begin{definition}[Cluster index]
	\label{def:cluster-index}
	We will call $\tailmeasurestar(H)$ the cluster index associated to the
	functional $H$.
\end{definition}

\subsection{Convergence of cluster measure}\label{sec:weakconv-cluster-measure}
Define the measures $\tailmeasurestar_{n,\dhinterseq}$, $n\geq1$, on $\lzero(\Rset^d)$ as follows:
\begin{align*}
% \label{eq:def-tailmeasurestar_n}
\tailmeasurestar_{n,\dhinterseq}
& =  \frac{1} {\dhinterseq \pr(\norm{\bsX_0}>\tepseq)} \esp \left[\delta_{\tepseq^{-1}\bsX_{1,\dhinterseq}} \right] \; .
\end{align*}
We are interested in convergence of $\tailmeasurestar_{n,\dhinterseq}$ to
$\tailmeasurestar$. The results of this section are extracted from \cite[Chapter 6]{kulik:soulier:2020}.
See also \cite{planinic:soulier:2018} and \cite{basrak:planinic:soulier:2018}.
\subsubsection{Anticlustering conditions}
For each fixed $r\in\Nset$, the distribution of
$\tepseq^{-1}\bsX_{-r,r}$ conditionally on $\norm{\bsX_0}>\tepseq$ converges weakly to the
distribution of $\bsY_{-r,r}$ (see \Cref{sec:tail-process}). In order to let $r$ tend to infinity, we must embed all these finite
vectors into one space of sequences. By adding zeroes on each side of the vectors
$\tepseq^{-1}\bsX_{-r,r}$ and $\bsY_{-r,r}$ we identify them with elements of the space
$\lzero(\Rset^d)$. Then $\bsY_{-r,r}$
converges (as $r\to\infty$) to $\bsY$ in $\lzero(\Rset^d)$ if (and only if) $\bsY\in\lzero(\Rset^d)$ almost surely.

However, this is not enough for statistical purposes and we consider the following definition.
\begin{definition}
	[\cite{davis:hsing:1995}, Condition~2.8]\label{def:DH}
	Condition~\ref{eq:conditiondh}
	holds if for all $x,y>0$,
	\begin{align}
	\label{eq:conditiondh}
	\lim_{m\to\infty} \limsup_{n\to\infty}\pr\left(\max_{m\le |j|\leq  \dhinterseq}|\bsX_j|
	> \tepseq x\mid |\bsX_0|> \tepseq y \right)=0 \; .
	\tag{$\conditiondh[\dhinterseq][\tepseq]$}
	\end{align}
\end{definition}
Condition \ref{eq:conditiondh} is referred to as the anticlustering condition.
It holds for \iid\ regularly varying sequences if $\lim_{n\to\infty}\dhinterseq \pr(\norm{\bsX_0}>\tepseq)=0$. Note that the latter condition is a part of the \ref{eq:rnbarFun0} assumption.
It is also fulfilled by many models, including geometrically ergodic Markov chains,
short-memory linear or max-stable processes.
\ref{eq:conditiondh} implies that $\bsY\in \lzero(\Rset^d)$. See \cite{kulik:soulier:wintenberger:2018} and \cite{kulik:soulier:2020}.

A stronger version of the anticlustering condition reads as follows.
\begin{definition}\label{def:conditionS}
	Condition
	\ref{eq:conditionS} holds if
	for all $s,t>0$
	\begin{align}
	\label{eq:conditionS}
	\lim_{m\to\infty} \limsup_{n\to\infty} \frac{1}{\pr(\norm{\bsX_0}>\tepseq)}
	\sum_{j=m}^{\dhinterseq} \pr(\norm{\bsX_0}>\tepseq s,\norm{\bsX_j}>\tepseq t) = 0 \;
	. \tag{$\conditiondhsumpaper{\dhinterseq}{\tepseq}$}
	\end{align}
\end{definition}
The
main consequence of the anticlustering condition \ref{eq:conditiondh} is the following result.
\begin{proposition}[\cite{basrak:segers:2009}, Proposition~4.2; \cite{kulik:soulier:2020}, Theorem 6.1.4]
	\label{lem:tailprocesstozero}
	Let $H\in \mcl$.
	If Condition~\ref{eq:conditiondh}
	holds, then
	\begin{align*}
	\lim_{n\to\infty} \esp[H(\tepseq^{-1} \bsX_{-\dhinterseq,\dhinterseq})\mid \norm{\bsX_0}>\tepseq] = \esp[H(\bsY)] \; .
	%\label{eq:conv-H-cluster}
	\end{align*}
\end{proposition}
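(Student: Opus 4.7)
The plan is to approximate the full block $\tepseq^{-1}\bsX_{-\dhinterseq,\dhinterseq}$ by a fixed-length truncation $\tepseq^{-1}\bsX_{-m,m}$, apply the defining property of the tail process to this finite-dimensional piece, and then send $m\to\infty$. Concretely, the triangle inequality gives
\begin{align*}
&\bigl|\esp[H(\tepseq^{-1}\bsX_{-\dhinterseq,\dhinterseq})\mid\norm{\bsX_0}>\tepseq]-\esp[H(\bsY)]\bigr|\leq A_{n,m}+B_{n,m}+C_m,
\end{align*}
where $A_{n,m}=\bigl|\esp[H(\tepseq^{-1}\bsX_{-\dhinterseq,\dhinterseq})-H(\tepseq^{-1}\bsX_{-m,m})\mid\norm{\bsX_0}>\tepseq]\bigr|$, $B_{n,m}=\bigl|\esp[H(\tepseq^{-1}\bsX_{-m,m})\mid\norm{\bsX_0}>\tepseq]-\esp[H(\bsY_{-m,m})]\bigr|$, and $C_m=\bigl|\esp[H(\bsY_{-m,m})]-\esp[H(\bsY)]\bigr|$. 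The goal is then to show $\lim_{n\to\infty}B_{n,m}=0$ for every fixed $m$, $\lim_{m\to\infty}C_m=0$, and $\lim_{m\to\infty}\limsup_{n\to\infty}A_{n,m}=0$.

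First I would handle $B_{n,m}$: for fixed $m$, the conditional law of $\tepseq^{-1}\bsX_{-m,m}$ given $\norm{\bsX_0}>\tepseq$ converges weakly in $(\Rset^d)^{2m+1}$ to the law of $\bsY_{-m,m}$ by the very definition of the tail process. Since $H\in\mcl$ is either Lipschitz for the uniform norm or almost surely continuous with respect to $\bsY$, the convergence of the expectations follows by a standard Portmanteau/continuous-mapping argument. Next, $C_m\to0$ by dominated convergence: \ref{eq:conditiondh} implies $\bsY\in\lzero(\Rset^d)$ almost surely, so $\bsY_{-m,m}\to\bsY$ in the supremum norm, continuity of $H$ along $\bsY$ transports this to $H(\bsY_{-m,m})\to H(\bsY)$ almost surely, and boundedness of $H$ supplies the dominating constant.

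The hard part is controlling $A_{n,m}$. In the Lipschitz subcase, using the bound $2\supnorm{H}$ on the event where the two arguments differ by more than $x$ in supremum norm, and the Lipschitz constant $L$ elsewhere, yields for every $x>0$
\begin{align*}
A_{n,m}\leq Lx+2\supnorm{H}\,\pr(\max_{m<|j|\leq\dhinterseq}|\bsX_j|>\tepseq x\mid\norm{\bsX_0}>\tepseq).
\end{align*}
Taking $\limsup_{n\to\infty}$ and then $\lim_{m\to\infty}$, assumption \ref{eq:conditiondh} annihilates the second summand; sending $x\to0$ kills the first. In the almost-surely-continuous subcase the same strategy applies: on the event $\{\max_{m<|j|\leq\dhinterseq}|\bsX_j|\leq\tepseq x\}$ the two arguments differ by at most $x$ in the supremum metric on $\lzero(\Rset^d)$, so by continuity of $H$ at the limiting point $\bsY$ the integrand tends to zero in probability as $x\to0$, while the complementary event is again controlled by \ref{eq:conditiondh}. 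Chaining the three bounds by letting $n\to\infty$, then $m\to\infty$, then $x\to0$ completes the argument.
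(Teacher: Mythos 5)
The paper offers no proof of this proposition: it is quoted verbatim from the literature (Basrak--Segers, Proposition~4.2; Kulik--Soulier, Theorem~6.1.4), so there is nothing internal to compare against. Your three-term decomposition into $A_{n,m}+B_{n,m}+C_m$ --- finite-dimensional convergence to the tail process for the fixed window, anticlustering \ref{eq:conditiondh} to collapse the tails of the growing block, and $\bsY\in\lzero(\Rset^d)$ to pass $m\to\infty$ --- is precisely the standard argument behind the cited result, and in the Lipschitz subcase every step is correct as written.

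One remark on the almost-surely-continuous subcase: a.s.\ continuity of $H$ with respect to the law of $\bsY$ does not by itself control the increment $H(\tepseq^{-1}\bsX_{-\dhinterseq,\dhinterseq})-H(\tepseq^{-1}\bsX_{-m,m})$ between two random points that are merely close to \emph{each other}; you need them to be close to a point where $H$ is continuous, i.e.\ you first need weak convergence of the conditional law of the full block to that of $\bsY$ in $\lzero(\Rset^d)$ before the mapping theorem applies. The repair is immediate and lives inside your own argument: your Lipschitz computation, run over bounded-Lipschitz test functions, already establishes that weak convergence (the $A$-term gives tightness beyond lag $m$ via \ref{eq:conditiondh}, the $B$- and $C$-terms give the identification of the limit), and the a.s.-continuous case then follows from the continuous mapping theorem applied once, to the full block, rather than to the difference of two truncations. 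With that reordering the proof is complete and matches the one in the cited references.
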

\subsubsection{Vague convergence of cluster measure}
We now state the unconditional convergence of
$\tepseq^{-1}\bsX_{1,\dhinterseq}$. Contrary to
\Cref{lem:tailprocesstozero}, where an extreme value was imposed at time 0, a large value in the
cluster can happen at any time. Moreover, the convergence of $\tailmeasurestar_{n,\dhinterseq}(H)$ to
$\tailmeasure^*(H)$ may hold only for \shiftinvariant\ functionals $H$. Therefore, we need the following definition.
\begin{definition}
	\label{def:spacetildelzero}
	The space $\tildelzero(\Rset^d)$ is the space of equivalence classes of $\lzero(\Rset^d)$ endowed with the
	equivalence relation $\sim$ defined by
	\begin{align*}
	\bsx\sim \bsy \Longleftrightarrow \exists j \in\Zset \; , \ \shift^j\bsx=\bsy \; .
	\end{align*}
\end{definition}
\begin{proposition}[Theorem 6.2.5 in \cite{kulik:soulier:2020}]
	\label{theo:cluster-RV}
	Let  condition~\ref{eq:conditiondh} hold. The sequence of measures $\tailmeasurestar_{n,\dhinterseq}$, $n\geq1$ converges
	\vaguelysharp\ on $\tildelzero(\Rset^d)\setminus\{\bszero\}$ to~$\tailmeasurestar$, that is,
	for all $H\in \mca$,
	\begin{align*}
	\lim_{n\to\infty} \tailmeasurestar_{n,\dhinterseq}(H)
	= \lim_{n\to\infty}\frac{\esp[ H(\tepseq^{-1}\bsX_{1,\dhinterseq})]} {\dhinterseq \pr(\norm{\bsX_0}>\tepseq)}
	= \tailmeasure^*(H) \; .
	\end{align*}
\end{proposition}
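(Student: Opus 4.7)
The plan is to use an anchoring map to reduce the unconditional statement to a sum of conditional ones, each of which falls under \Cref{lem:tailprocesstozero}, and then to identify the resulting limit with $\tailmeasurestar(H)$ via the identity \eqref{eq:relation-cluster-Y-Q-Theta-epsilon}.

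Fix $H\in\mca$. By definition of $\mca$, $H$ is shift-invariant and there exists $\epsilon\in(0,1)$ such that $H(\bsx)=0$ whenever $\bsx^*\leq\epsilon$. Introduce the anchoring map $\anchor^{(1)}_\epsilon(\bsx)=\inf\{j:\norm{\bsx_j}>\epsilon\}$. Because $H(\tepseq^{-1}\bsX_{1,\dhinterseq})$ vanishes unless some $\norm{\bsX_j}$ exceeds $\epsilon\tepseq$, we may decompose
\begin{align*}
\esp[H(\tepseq^{-1}\bsX_{1,\dhinterseq})]=\sum_{j=1}^{\dhinterseq}\esp\bigl[H(\tepseq^{-1}\bsX_{1,\dhinterseq})\ind{\anchor^{(1)}_\epsilon(\tepseq^{-1}\bsX_{1,\dhinterseq})=j}\bigr]\;.
\end{align*}
Using stationarity of $\bsX$, shift-invariance of $H$ and the shift-equivariance $\anchor(B\bsx)=\anchor(\bsx)+1$ of the anchor, each summand rewrites as
\begin{align*}
\esp\bigl[H(\tepseq^{-1}\bsX_{1-j,\dhinterseq-j})\ind{\anchor^{(1)}_\epsilon(\tepseq^{-1}\bsX_{1-j,\dhinterseq-j})=0}\bigr]\;,
\end{align*}
that is, $H$ and the anchor event are relocated so that the first exceedance above $\epsilon\tepseq$ sits at index $0$, with the block extending $j-1$ positions to the left and $\dhinterseq-j$ positions to the right.

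On the event $\{\anchor^{(1)}_\epsilon(\cdot)=0\}$ we automatically have $\norm{\bsX_0}>\epsilon\tepseq$, and by regular variation $\pr(\norm{\bsX_0}>\epsilon\tepseq)\sim\epsilon^{-\alpha}\pr(\norm{\bsX_0}>\tepseq)$. By \Cref{lem:tailprocesstozero} applied with threshold $\epsilon\tepseq$, for each fixed $m$ the conditional law of $\tepseq^{-1}\bsX_{-m,m}$ given $\norm{\bsX_0}>\epsilon\tepseq$ converges to that of $\epsilon\bsY_{-m,m}$, and as $m\to\infty$ further to the law of $\epsilon\bsY$ on $\lzero(\Rset^d)$. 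For indices $j$ in the ``bulk'' $\{m,\dots,\dhinterseq-m\}$, one replaces $\bsX_{1-j,\dhinterseq-j}$ by the fixed symmetric window $\bsX_{-m,m}$ inside both $H$ and the anchor indicator. Summing over these bulk indices, dividing by $\dhinterseq\pr(\norm{\bsX_0}>\tepseq)$, and finally letting $m\to\infty$, produces the limit $\epsilon^{-\alpha}\esp[H(\epsilon\bsY)\ind{\anchor^{(1)}(\bsY)=0}]$, which by \eqref{eq:relation-cluster-Y-Q-Theta-epsilon} equals $\tailmeasurestar(H)$.

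The main obstacle is the interchange of $\lim_{n\to\infty}$ with $\sum_{j=1}^{\dhinterseq}$: for $j$ near $1$ or $\dhinterseq$ the shifted block is strongly asymmetric around $0$ and \Cref{lem:tailprocesstozero} cannot be applied to it directly. The truncation just outlined handles this, provided two error terms are controlled. First, the boundary indices $j<m$ or $j>\dhinterseq-m$ contribute at most $2m\pr(\norm{\bsX_0}>\epsilon\tepseq)$, which after division by $\dhinterseq\pr(\norm{\bsX_0}>\tepseq)$ is $O(m/\dhinterseq)\to 0$ once $n\to\infty$ precedes $m\to\infty$. Second, the replacement error inside the bulk is bounded by $\pr(\max_{m<|i|\leq\dhinterseq}\norm{\bsX_i}>\epsilon\tepseq\mid\norm{\bsX_0}>\epsilon\tepseq)$ (any discrepancy forces an exceedance at some coordinate of modulus larger than $m$ together with an exceedance at $0$), which vanishes under \ref{eq:conditiondh} in the same order of limits; the hypothesis $H\in\mcl$, namely either Lipschitz continuity \wrt\ the uniform norm or a.s.\ continuity under the law of $\bsY$, is exactly what is needed to pass $H$ through the truncation $\bsY_{-m,m}\to\bsY$ in $\lzero(\Rset^d)$.
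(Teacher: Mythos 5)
The paper does not prove this proposition: it is imported verbatim as Theorem 6.2.5 of \cite{kulik:soulier:2020}, so there is no in-paper argument to compare against. Your proof is a correct reconstruction of the standard argument behind that cited result: decompose the unconditional expectation over the location of the first exceedance above $\epsilon\tepseq$, use stationarity and shift-invariance to recentre at $0$, pass to the conditional limit via \Cref{lem:tailprocesstozero}, and identify the limit through \eqref{eq:relation-cluster-Y-Q-Theta-epsilon}; the order of limits ($n\to\infty$ before $m\to\infty$) and the $O(m/\dhinterseq)$ boundary estimate are handled correctly. The only imprecision is in the bulk replacement step: the bound by $\pr(\max_{m<|i|\leq\dhinterseq}\norm{\bsX_i}>\epsilon\tepseq\mid\norm{\bsX_0}>\epsilon\tepseq)$ controls the discrepancy in the anchor indicator, but not by itself the discrepancy in $H$, since $H\in\mca$ may depend on coordinates below the level $\epsilon$; there one needs the standard estimate $|H(\bsx)-H(\bsy)|\leq (L\supnorm{\bsx-\bsy})\wedge 2\supnorm{H}$ together with an auxiliary threshold $\delta$ sent to $0$ after $m$ (or the a.s.-continuity alternative in the definition of $\mcl$), which you gesture at but do not spell out.
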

The immediate consequence is the following limit (cf. \eqref{eq:canditheta-anchor}):
\begin{align*}
%\label{eq:extremal-index}
\lim_{n\to\infty} \frac{\pr(\bsX_{1,\dhinterseq}^*>\tepseq)}{\dhinterseq\pr(\norm{\bsX_0}>\tepseq)} = \canditheta  \; .
\end{align*}

\subsubsection{Indicator functionals not vanishing around zero}\label{sec:extension-1}
\Cref{theo:cluster-RV} entails convergence of $\tailmeasurestar_{n,\dhinterseq}(H)$ for
$H\in\mca$.
For functionals which are not
defined on the whole space $\lzero(\Rset^d)$ we need an additional assumption on Asymptotic Negligibility of Small Jumps.
\begin{definition}
	\label{def:AN-small}
	Condition~\ref{eq:AN-small} holds if for all
	$\eta>0$, \index{$\ANSJB(\dhinterseq,\tepseq)$}
	\begin{align}
	\lim_{\epsilon\to0} \limsup_{n\to\infty}
	\frac{\pr(\sum_{j=1}^{\dhinterseq} |\bsX_j| \ind{|\bsX_j|\leq\epsilon \tepseq}>\eta \tepseq)}
	{\dhinterseq\pr(|\bsX_0|>\tepseq)} = 0 \; . \tag{$\ANSJB(\dhinterseq,\tepseq)$} \label{eq:AN-small}
	\end{align}
\end{definition}
\begin{proposition}[Theorem 6.2.16 in \cite{kulik:soulier:2020}]
	\label{theo:limit-lone-Q}
	Assume that \ref{eq:conditiondh} and \ref{eq:AN-small} hold.   Then for $K\in{\mathcal K}$,
	\begin{align*}
	\tailmeasurestar(\ind{K>1})= \lim_{n\to\infty} \frac{\pr(K(\bsX_{1,\dhinterseq}/\tepseq )>1)} {\dhinterseq\pr(\norm{\bsX_0}>\tepseq )} = \canditheta
	\int_0^\infty \pr(K(z\bsQ)>1) \alpha z^{-\alpha-1} \rmd z  < \infty \; .  %\label{eq:identity-K-alpha>1}
	\end{align*}
\end{proposition}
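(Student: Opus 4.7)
The obstacle is that $H=\ind{K>1}$ for $K\in\mck$ does not in general have support separated from $\bszero$ (a small per-coordinate contribution, summed over many indices, can push $K$ above $1$), so Proposition~\ref{theo:cluster-RV} is not directly applicable. My plan is to approximate $H$ by a sequence of functionals whose support is separated from $\bszero$ (so that they belong to $\mca$), apply Proposition~\ref{theo:cluster-RV} to the approximants, and then use \ref{eq:AN-small} to transfer the limit back, with Lipschitz continuity controlling the boundary event.

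For $\epsilon>0$ and $\bsx\in\lzero(\Rset^d)$, define the coordinatewise threshold $\bsx^{(\epsilon)}=\{\bsx_j\ind{\norm{\bsx_j}>\epsilon}\}_{j\in\Zset}$, and set $K_\epsilon(\bsx)=K(\bsx^{(\epsilon)})$, $H_\epsilon=\ind{K_\epsilon>1}$. Then $H_\epsilon$ is shift-invariant (since $K$ is), bounded, and its support is contained in $\{\bsx:\sup_j\norm{\bsx_j}>\epsilon\}$, hence separated from $\bszero$. Apart from the $\tailmeasurestar$-null set $\{K_\epsilon=1\}\cup\bigcup_j\{\norm{\bsx_j}=\epsilon\}$ (with the latter null for all but countably many $\epsilon$), $H_\epsilon$ is continuous on $\lzero(\Rset^d)$, so $H_\epsilon\in\mca$. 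Proposition~\ref{theo:cluster-RV} then yields
\begin{align*}
\lim_{n\to\infty}\frac{\pr(K_\epsilon(\tepseq^{-1}\bsX_{1,\dhinterseq})>1)}{\dhinterseq\pr(\norm{\bsX_0}>\tepseq)}=\tailmeasurestar(H_\epsilon)=\canditheta\int_0^\infty\pr(K_\epsilon(z\bsQ)>1)\,\alpha z^{-\alpha-1}\rmd z\;,
\end{align*}
the right equality following from \eqref{eq:relation-cluster-Y-Q-Theta-epsilon} and the definition of $\bsQ$ applied to $H_\epsilon$.

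The Lipschitz assumption gives $|K(\bsx)-K_\epsilon(\bsx)|\leq L_K\sum_j\norm{\bsx_j}\ind{\norm{\bsx_j}\leq\epsilon}$, so for any $\eta\in(0,1)$,
\begin{align*}
\bigl|\ind{K(\bsx)>1}-\ind{K_\epsilon(\bsx)>1}\bigr|\leq\ind{1-\eta<K_\epsilon(\bsx)\leq 1+\eta}+\ind{L_K\textstyle\sum_j\norm{\bsx_j}\ind{\norm{\bsx_j}\leq\epsilon}>\eta}\;.
\end{align*}
Applied to $\bsx=\tepseq^{-1}\bsX_{1,\dhinterseq}$ and divided by $\dhinterseq\pr(\norm{\bsX_0}>\tepseq)$, the second term vanishes as $\epsilon\to 0$ uniformly in $n$ by \ref{eq:AN-small}, while the first converges (by Proposition~\ref{theo:cluster-RV}, since $\ind{1-\eta<K_\epsilon\leq 1+\eta}\in\mca$ for a.e.\ $\eta$) to $\tailmeasurestar(\{1-\eta<K_\epsilon\leq 1+\eta\})$, which tends to $0$ as $\eta\to 0$ provided $\tailmeasurestar(\{K_\epsilon=1\})=0$ (true for a.e.\ $\epsilon$ by $\alpha$-homogeneity of $\tailmeasurestar$).

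Combining these steps yields the first equality in the statement. To pass to the limit $\epsilon\to0$ on the right-hand side, observe that $K_\epsilon(z\bsQ)\to K(z\bsQ)$ almost surely for every $z>0$ (by Lipschitz continuity and $\bsQ\in\lone(\Rset^d)$, which follows from \ref{eq:conditionS}, and \ref{eq:conditionS} itself is a consequence of \ref{eq:AN-small} and \ref{eq:conditiondh} under the usual integrability setup; alternatively one invokes \cite[Lemma~9.2.3]{kulik:soulier:2020}). Dominated convergence in $z$ (dominating by $\pr(L_K\sum_j|z\bsQ_j|>1)$, integrable against $\alpha z^{-\alpha-1}$ since $\esp[\sum_j|\bsQ_j|]<\infty$) gives the desired formula. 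Finiteness of the integral follows from the same bound. The main technical hurdle is verifying that the truncation error is uniformly negligible in $n$; this is exactly what \ref{eq:AN-small} is designed for, together with the continuity of $\tailmeasurestar$ across the level set $\{K=1\}$.
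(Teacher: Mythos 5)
The paper contains no proof of this proposition: it is quoted verbatim as Theorem~6.2.16 of \cite{kulik:soulier:2020}, and the text states that all of \Cref{sec:weakconv-cluster-measure} is extracted from that reference, so there is no in-paper argument to compare yours against. On its own merits, your plan --- truncate the small coordinates to obtain $K_\epsilon$ with $\ind{K_\epsilon>1}\in\mca$, apply \Cref{theo:cluster-RV} to the approximants, and absorb the truncation error $|K(\bsx)-K_\epsilon(\bsx)|\leq L_K\sum_j\norm{\bsx_j}\ind{\norm{\bsx_j}\leq\epsilon}$ through \ref{eq:AN-small} with an $\eta$-corridor around the level $1$ --- is the standard route and is in substance how the cited source argues; the skeleton is sound. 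Two justifications need repair. First, $K_\epsilon$ is \emph{not} homogeneous (truncating coordinates at the fixed level $\epsilon$ destroys homogeneity), so your claim that $\tailmeasurestar(\{K_\epsilon=1\})=0$ ``by $\alpha$-homogeneity of $\tailmeasurestar$'' does not stand; what works instead is that the nonincreasing finite function $t\mapsto\tailmeasurestar(K_\epsilon>t)$ on $(\delta,\infty)$ has at most countably many atoms, so one runs the sandwich at a nearby continuity point of this function and then lets the level tend to $1$ --- and the analogous negligibility issue for $\{K=1\}$, which is what ultimately pins the limit to $\tailmeasurestar(\ind{K>1})$ rather than some value between $\tailmeasurestar(K>1)$ and $\tailmeasurestar(K\geq1)$, must be handled with the same care rather than waved at. Second, with $\Sigma=\sum_j\norm{\bsQ_j}$ your dominating bound gives $\int_0^\infty\pr(L_Kz\Sigma>1)\,\alpha z^{-\alpha-1}\rmd z=L_K^\alpha\esp[\Sigma^\alpha]$, so for $\alpha>1$ the finiteness you need is $\esp[\Sigma^\alpha]<\infty$, not $\esp[\Sigma]<\infty$ as written; this $\alpha$-th moment bound is precisely the case $K(\bsx)=L_K\sum_j\norm{\bsx_j}$ of the statement being proved and should be extracted directly from \ref{eq:AN-small} (e.g.\ from the finite $\limsup$ of the prelimit ratio), while your side remark that \ref{eq:conditionS} follows from \ref{eq:conditiondh} and \ref{eq:AN-small} is both unsupported and unnecessary for the argument.
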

\section{Central limit theorem for runs estimators}\label{sec:estimators}
In this section we introduce and study runs estimators of cluster indices. A pseudo-estimator is defined in \eqref{eq:process-xi}. Its limiting covariance (for different anchoring maps) is studied in \Cref{lem:covariance-runs}. In particular, for two different anchoring maps, the runs statistics are totally dependent. As a consequence we cannot reduce the limiting variance for the estimation of $\tailmeasurestar(H)$ by considering linear combinations of the runs statistics.
In \Cref{lem:covariance-runs-blocks} we consider covariance between runs and disjoint blocks estimators. Again, we obtain total dependence in the limit. The main result of the paper is the central limit theorem for runs estimators; see \Cref{thm:sliding-block-clt-1}.
The limiting variance agrees with the one for the disjoint blocks and sliding blocks estimators.

\subsection{Runs estimator}
\label{sec:runs}
To introduce runs estimators recall that (cf. \eqref{eq:anchor-shift})
\begin{align}\label{eq:shift-representation}
&H^{\anchor}\left(\bsX_{(j-1)\dhinterseq+h,(j+1)\dhinterseq+h}/\tepseq\right)=
H^{\anchor}(\backshift^{-h-j\dhinterseq}\bsX_{-\dhinterseq,\dhinterseq}/\tepseq)\nonumber\\
&=
H(\backshift^{-h-j\dhinterseq}\bsX_{-\dhinterseq,\dhinterseq}/\tepseq)
\ind{\anchor(\backshift^{-h-j\dhinterseq}\bsX_{-\dhinterseq,\dhinterseq}/\tepseq)=0}
\ind{\norm{\backshift^{-h-j\dhinterseq}\bsX_0}>\tepseq}\nonumber\\
&=H\left(\bsX_{(j-1)\dhinterseq+h,(j+1)\dhinterseq+h}/\tepseq\right)\times \nonumber\\
&\phantom{=}\ \ \ \ 
\ind{\anchor(\bsX_{(j-1)\dhinterseq+h,(j+1)\dhinterseq+h}/\tepseq)=h+j\dhinterseq}\ind{\norm{\bsX_{h+j\dhinterseq}}>\tepseq}\;.
\end{align}

Set $q_n=n-\dhinterseq$ and $m_n=n/\dhinterseq$. Without loss of generality assume that $m_n$ is an integer.
Consider disjoint blocks
\begin{align}\label{eq:disjoint-blocks}
J_j:=\{j\dhinterseq+1,\ldots,(j+1)\dhinterseq\}\;, \ \ j=0,\ldots,m_n-1\;.
\end{align}
The union of these blocks gives $\{1,\ldots,n\}$.
We assume we have data $\bsX_{1-\dhinterseq},\ldots,\bsX_{n+\dhinterseq}$.
For $j=0,\ldots,m_n-1$ define
\begin{align}
H_{n,j}^{\anchor}&=\sum_{i=j\dhinterseq+1}^{(j+1)\dhinterseq}
H^{\anchor}\left(\bsX_{i-\dhinterseq,i+\dhinterseq}/\tepseq\right)=
\sum_{i=j\dhinterseq+1}^{(j+1)\dhinterseq}
H^{\anchor}\left(B^{-i}\bsX_{-\dhinterseq,\dhinterseq}/\tepseq\right)\nonumber
\\
&=\sum_{i=j\dhinterseq+1}^{(j+1)\dhinterseq}
H\left(\bsX_{i-\dhinterseq,i+\dhinterseq}/\tepseq\right)\ind{\anchor\left(\bsX_{i-\dhinterseq,i+\dhinterseq}/\tepseq\right)=i}\ind{\norm{\bsX_i}>\tepseq}\;.\label{eq:Hnj}
\end{align}
Each $H_{n,j}^\anchor$ is a function of the block $\bsX_{(j-1)\dhinterseq+1,\ldots,(j+2)\dhinterseq}$ of size $3\dhinterseq$. The number $j$ in the notation  $H_{n,j}^\anchor$ indicates that the indicator $\norm{\bsX_i}>\tepseq$ is applied with $i\in J_j$.

We
consider a random process
\begin{align}
&\tedclusterr(H^{\anchor})=\frac{1}{n\pr(\norm{\bsX_0}>\tepseq)}
\sum_{i=1}^nH^{\anchor}\left(\bsX_{i-\dhinterseq,i+\dhinterseq}/\tepseq\right)\label{eq:process-xi}
\end{align}
that can be decomposed as
\begin{align*}
&\tedclusterr(H^{\anchor})=
%\frac{1}{n\pr(\norm{\bsX_0}>\tepseq)}\sum_{j=0}^{m_n-1}\sum_{i=j\dhinterseq+1}^{(j+1)\dhinterseq}
%H^{\anchor}\left(\bsX_{i-\dhinterseq,i+\dhinterseq}/\tepseq\right)=
\frac{1}{n\pr(\norm{\bsX_0}>\tepseq)}\sum_{j=0}^{m_n-1}H_{n,j}^{\anchor}\;.\nonumber
\end{align*}
If the anticlustering condition \ref{eq:conditiondh} holds, then
using stationarity, definition \eqref{eq:HC-function} of $H^\anchor$,
\Cref{lem:tailprocesstozero} and \eqref{eq:HA-on-Y}
we have
\begin{align*}
&\lim_{n\to\infty}\esp[\tedclusterr(H^{\anchor})]
=
\lim_{n\to\infty}
\frac{1}{\pr(\norm{\bsX_0}>\tepseq)}\esp\left[H^{\anchor}(\bsX_{-\dhinterseq,\dhinterseq}/\tepseq)\right]=\esp[H^{\anchor}(\bsY)]=\tailmeasurestar(H)\;.
\end{align*}
Now, let $\statinterseqn$ be a sequence of integers (depending on $n$) such that $\statinterseqn\to\infty$ and $\statinterseqn/n\to 0$. Define $\tepseq$ by $\statinterseqn=n\pr(\norm{\bsX_0}>\tepseq)$ and replace $\tepseq$ in $H^{\anchor}\left(\bsX_{i-\dhinterseq,i+\dhinterseq}/\tepseq\right)$ with $(\statinterseqn+1)$th order statistics $\orderstat[\norm{\bsX}]{n}{n-\statinterseqn}$ to get the
runs estimator:
\begin{align}\label{eq:process-xi-hat}
&\TEDclusterrandomr_{n,\dhinterseq}(H^{\anchor})=\frac{1}{\statinterseqn}
\sum_{i=1}^nH^{\anchor}\left(\bsX_{i-\dhinterseq,i+\dhinterseq}/\orderstat[\norm{\bsX}]{n}{n-\statinterseqn}\right)\;.
\end{align}

In what follows we will use interchangeably $\statinterseqn$ and $n\pr(\norm{\bsX_0}>\tepseq)$, whatever is more suitable.
\subsection{Mixing assumptions}

Dependence in $\sequence{\bsX}$ will be controlled by the $\beta$-mixing rates $\{\beta_n\}$.  Recall \ref{eq:rnbarFun0}.
Let $\{\ell_n\}$ be a sequence of integers such that $\lim_{n\to\infty}\ell_n=\infty$ and $\lim_{n\to\infty} \ell_n/\dhinterseq=0$.
\begin{definition}\label{def:condition-beta-r-l-sl}
	Condition $\beta'(\dhinterseq)$ holds if:
	\begin{subequations}
		\begin{align}
		&\lim_{n\to\infty}\frac{n}{\dhinterseq}\beta_{\dhinterseq}=0\;,\label{eq:mixing-rates-0}\\
		&\lim_{n\to\infty}
		\frac{1}
		{\pr(\norm{\bsX_0}>\tepseq)}
		\sum_{i=\dhinterseq+1}^{\infty}\beta_{i}=\lim_{n\to\infty}
		\frac{n}
		{\statinterseqn}
		\sum_{i=\dhinterseq+1}^{\infty}\beta_{i}=0\;,\label{eq:mixing-rates-runs}\\
		&\lim_{n\to\infty}\frac{1}{\dhinterseq \pr(\norm{\bsX_0}>\tepseq)}\sum_{j=1}^\infty \beta_{j\dhinterseq}=\lim_{n\to\infty}\frac{n}{\dhinterseq \statinterseqn}\sum_{j=1}^\infty \beta_{j\dhinterseq}=0\;.\label{eq:mixing-rates-7}
		\end{align}
	\end{subequations}
\end{definition}

\subsection{Limiting covariances}
\subsubsection{Runs statistics}
The first result deals with covariance of the process $\tedclusterr$ defined in \eqref{eq:process-xi}.
\begin{lemma}\label{lem:covariance-runs}
	Assume~\ref{eq:rnbarFun0},~\ref{eq:conditiondh},~\ref{eq:conditionS} and~\eqref{eq:mixing-rates-runs} hold. Let
	$H,\widetilde{H}\in \mcl$ and $\anchor,\widetilde\anchor$ be anchoring maps. Then
	\begin{align}\label{eq:covariance-statement}
	&\lim_{n\to\infty}n\pr(\norm{\bsX_0}>\tepseq)\cov\left(\tedclusterr(H^{\anchor}),\tedclusterr(\widetilde{H}^{\widetilde\anchor})\right)=\tailmeasurestar(H\widetilde{H})\;.
	\end{align}
\end{lemma}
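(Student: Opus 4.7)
By stationarity of $\bsX$ the left-hand side of \eqref{eq:covariance-statement} rewrites as
\begin{align*}
\pr(\norm{\bsX_0}>\tepseq)^{-1}\sum_{|h|<n}(1-|h|/n)\,c_n(h),\quad c_n(h):=\cov\!\left(H^{\anchor}(\bsX_{-\dhinterseq,\dhinterseq}/\tepseq),\widetilde H^{\widetilde\anchor}(\bsX_{h-\dhinterseq,h+\dhinterseq}/\tepseq)\right).
\end{align*}
The plan is to split the sum into a short ($|h|\leq m$), a medium ($m<|h|\leq 3\dhinterseq$), and a long ($|h|>3\dhinterseq$) regime; control the medium and long regimes by~\ref{eq:conditionS} and~\eqref{eq:mixing-rates-runs} respectively; identify the short-regime limit for fixed $h$ via \Cref{lem:tailprocesstozero}; and finally let $m\to\infty$.

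\textbf{Negligible regimes.} In the long regime the two blocks are $\dhinterseq$-separated, so $\beta$-mixing gives $|c_n(h)|\leq 4\supnorm{H}\supnorm{\widetilde H}\beta_{|h|-2\dhinterseq}$; the total contribution is bounded by a constant multiple of $\pr(\norm{\bsX_0}>\tepseq)^{-1}\sum_{i\geq\dhinterseq+1}\beta_i\to 0$ via~\eqref{eq:mixing-rates-runs}. In the medium regime, the factor $\ind{\norm{\bsx_0}>1}$ in $H^{\anchor}$ yields $|H^{\anchor}(\bsX_{-\dhinterseq,\dhinterseq}/\tepseq)|\leq\supnorm{H}\ind{\norm{\bsX_0}>\tepseq}$ and symmetrically for $\widetilde H^{\widetilde\anchor}$, so the joint-expectation contribution is at most $C\pr(\norm{\bsX_0}>\tepseq)^{-1}\sum_{m<|h|\leq 3\dhinterseq}\pr(\norm{\bsX_0}>\tepseq,\norm{\bsX_h}>\tepseq)$, which vanishes upon $n\to\infty$ then $m\to\infty$ by~\ref{eq:conditionS}; the product-of-expectations contribution here is $O(\dhinterseq\pr(\norm{\bsX_0}>\tepseq))=o(1)$ by~\ref{eq:rnbarFun0}.

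\textbf{Short regime and identification of the limit.} For fixed $h$ the rescaled joint expectation equals $\esp[H^{\anchor}(\bsX_{-\dhinterseq,\dhinterseq}/\tepseq)\widetilde H^{\widetilde\anchor}(\bsX_{h-\dhinterseq,h+\dhinterseq}/\tepseq)\mid\norm{\bsX_0}>\tepseq]$, and \Cref{lem:tailprocesstozero} applied to the enlarged block $\bsX_{-(\dhinterseq+|h|),\dhinterseq+|h|}/\tepseq$ gives convergence to $\esp[H^{\anchor}(\bsY)\widetilde H^{\widetilde\anchor}(\tau_h\bsY)]$, where $\tau_h\bsY$ denotes $\bsY$ viewed as centered at index $h$; the matching product of expectations is $O(\pr(\norm{\bsX_0}>\tepseq))\to 0$. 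Summing over $h\in\Zset$, shift-invariance of $\widetilde H$ and property An(ii) give $\widetilde H^{\widetilde\anchor}(\tau_h\bsY)=\widetilde H(\bsY)\ind{\widetilde\anchor(\bsY)=h}\ind{\norm{\bsY_h}>1}$; since $\sum_{h}\ind{\widetilde\anchor(\bsY)=h}\ind{\norm{\bsY_h}>1}=1$ almost surely (by An(i) and a.s.\ finiteness of $\widetilde\anchor(\bsY)$ via~\eqref{eq:summability-of-Y}), one obtains
\begin{align*}
\sum_{h\in\Zset}\esp[H^{\anchor}(\bsY)\widetilde H^{\widetilde\anchor}(\tau_h\bsY)]=\esp\!\left[H(\bsY)\widetilde H(\bsY)\ind{\anchor(\bsY)=0}\ind{\norm{\bsY_0}>1}\right]=\tailmeasurestar(H\widetilde H)
\end{align*}
by~\eqref{eq:HA-on-Y} applied to the product $H\widetilde H\in\mcl$.

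\textbf{Main obstacle.} The most delicate step is the medium regime, where the number of lags grows with $n$ and only the double-limit form of~\ref{eq:conditionS} delivers the required decay. A secondary subtlety is the $\bsY$-almost sure continuity of the anchoring indicator $\ind{\anchor(\bsx)=0}$ needed to apply \Cref{lem:tailprocesstozero}; this holds for the three specific anchoring maps of \Cref{sec:anchoring} precisely because of their monotonic or $0$-homogeneous structure, which is why the paper restricts the analysis to them.
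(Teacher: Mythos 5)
Your proof is correct and follows essentially the same route as the paper's: a stationarity decomposition over lags, with the long lags controlled by $\beta$-mixing via \eqref{eq:mixing-rates-runs}, the intermediate lags by \ref{eq:conditionS}, and the fixed lags identified through conditional convergence to the tail process and then summed using the anchoring partition of unity together with \eqref{eq:summability-of-Y}. The only step you compress is the fixed-$h$ limit: because $\ind{\anchor(\bsx)=0}$ is not Lipschitz, the paper does not invoke \Cref{lem:tailprocesstozero} directly but proves the dedicated \Cref{lem:cond-conv-H} by a finite-window truncation argument for the three specific anchoring maps --- exactly the subtlety you flag at the end of your outline.
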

We note that the limit does not depend on the choice of the anchoring maps. In other words, for two different anchoring maps, $\anchor$ and $\widetilde\anchor$, the runs statistics $\tedclusterr(H^{\anchor})$ and $\tedclusterr(H^{\widetilde\anchor})$ are totally dependent. As a consequence we cannot reduce the limiting variance for the estimation of $\tailmeasurestar(H)$ by considering a linear combination of $\tedclusterr(H^{\anchor})$ and $\tedclusterr(H^{\widetilde\anchor})$.

\subsubsection{Runs and disjoint blocks statistics}
We analyse covariance between
$\tedclusterr(H^{\anchor})$ defined in \eqref{eq:process-xi}
and the disjoint blocks statistics
\begin{align}\label{eq:blocks-estimator}
\frac{1}{n\pr(\norm{\bsX_0}>\tepseq)}\sum_{j=0}^{m_n-1}H\left(\bsX_{j\dhinterseq+1,(j+1)\dhinterseq}/\tepseq\right)
%=:
%\frac{1}{n\pr(\norm{\bsX_0}>\tepseq)}\sum_{j=0}^{m_n-1}\widetilde H_{j}
=\tedcluster(H)\;.
\end{align}
The disjoint blocks statistics are considered in \cite[Chapter 10]{kulik:soulier:2020}.
\begin{lemma}
	\label{lem:covariance-runs-blocks}
	Assume~\ref{eq:rnbarFun0},~\ref{eq:conditiondh},~\ref{eq:conditionS} and~\eqref{eq:mixing-rates-7} hold. Let
	$H,\widetilde{H}\in \mcl$, $\widetilde{H}(\bszero)=0$ and $\anchor$ be an anchoring map. Then
	\begin{align}\label{eq:covariance-statement-runs-blocks}
	&\lim_{n\to\infty}n\pr(\norm{\bsX_0}>\tepseq)\cov\left(\tedclusterr(H^{\anchor}),\tedcluster(\widetilde{H})\right)=\tailmeasurestar(H\widetilde{H})\;.
	\end{align}
\end{lemma}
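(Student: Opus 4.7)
The plan is to follow the scheme of \Cref{lem:covariance-runs}, with the second sliding-blocks runs sum replaced by the disjoint-blocks sum. Writing $\widetilde H_{n,j'}:=\widetilde H(\bsX_{j'\dhinterseq+1,(j'+1)\dhinterseq}/\tepseq)$ and using the decomposition $\tedclusterr(H^\anchor)=(n\pr(\norm{\bsX_0}>\tepseq))^{-1}\sum_{j=0}^{m_n-1}H_{n,j}^\anchor$ from \eqref{eq:Hnj}, I multiply the covariance by $n\pr(\norm{\bsX_0}>\tepseq)$ and use stationarity (so that $\cov(H_{n,j}^\anchor,\widetilde H_{n,j+\ell})=\cov(H_{n,0}^\anchor,\widetilde H_{n,\ell})$ for each $\ell$) to reduce the double sum to
\begin{align*}
\frac{1}{\dhinterseq\pr(\norm{\bsX_0}>\tepseq)}\sum_{\ell\in\Zset}\cov(H_{n,0}^\anchor,\widetilde H_{n,\ell}),
\end{align*}
up to lower-order boundary corrections.

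For the off-diagonal range $|\ell|\ge 3$, the supports of $H_{n,0}^\anchor$ and $\widetilde H_{n,\ell}$ are separated by at least $\dhinterseq$ indices, so the $\beta$-mixing covariance inequality applies. The naive bound using $\|H_{n,0}^\anchor\|_\infty=O(\dhinterseq)$ alone is too crude; one must combine it with the concentration $\pr(H_{n,0}^\anchor\ne 0)\le C\dhinterseq\pr(\norm{\bsX_0}>\tepseq)$ in a Rio-type quantile covariance inequality to obtain $|\cov(H_{n,0}^\anchor,\widetilde H_{n,\ell})|\le C\dhinterseq\min(\beta_{(|\ell|-2)\dhinterseq},\dhinterseq\pr(\norm{\bsX_0}>\tepseq))$, and summing against \eqref{eq:mixing-rates-7} kills this part. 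For the near-diagonal range $|\ell|\le 2$, the product of expectations is $O(\dhinterseq^2\pr(\norm{\bsX_0}>\tepseq)^2)$ and contributes $O(\dhinterseq\pr(\norm{\bsX_0}>\tepseq))\to 0$ after normalization.

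There remain the joint expectations $\esp[H_{n,0}^\anchor\widetilde H_{n,\ell}]$ for $|\ell|\le 2$. Expanding $H_{n,0}^\anchor=\sum_{k=1}^{\dhinterseq}H^\anchor(\bsX_{k-\dhinterseq,k+\dhinterseq}/\tepseq)$ and shifting each summand by $-k$ via stationarity, each term becomes the Cesaro average
\begin{align*}
\frac{1}{\dhinterseq}\sum_{k=1}^{\dhinterseq}\esp\!\left[H\!\left(\tepseq^{-1}\bsX_{-\dhinterseq,\dhinterseq}\right)\ind{\anchor(\tepseq^{-1}\bsX_{-\dhinterseq,\dhinterseq})=0}\widetilde H\!\left(\tepseq^{-1}\bsX_{\ell\dhinterseq+1-k,(\ell+1)\dhinterseq-k}\right)\,\Big|\,\norm{\bsX_0}>\tepseq\right].
\end{align*}
By \Cref{lem:tailprocesstozero} each conditional expectation converges to its tail-process analogue $\esp[H(\bsY)\ind{\anchor(\bsY)=0}\widetilde H(\bsY_{\ell\dhinterseq+1-k,(\ell+1)\dhinterseq-k})]$. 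When $\ell=0$, the window $[1-k,\dhinterseq-k]$ always contains the origin and expands to $\Zset$ as $\min(k,\dhinterseq-k)\to\infty$; since $\bsY\in\lzero(\Rset^d)$ almost surely and $\widetilde H\in\mcl$, dominated convergence gives Cesaro limit $\esp[H(\bsY)\widetilde H(\bsY)\ind{\anchor(\bsY)=0}]$. When $\ell\in\{\pm 1,\pm 2\}$ the window stays at indices bounded away from $0$ for all but $O(1)$ values of $k$, and the decay of $\bsY$ combined with $\widetilde H(\bszero)=0$ forces the evaluation of $\widetilde H$ to $0$, so the Cesaro average vanishes. Summing and using $\norm{\bsY_0}>1$ a.s.\ together with \eqref{eq:HA-on-Y}, the limit equals $\esp[(H\widetilde H)^\anchor(\bsY)]=\tailmeasurestar(H\widetilde H)$.

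The chief obstacle is the off-diagonal bound: a $\|\cdot\|_\infty$-only $\beta$-mixing estimate is too weak to match \eqref{eq:mixing-rates-7}, and the small-probability support of $H_{n,0}^\anchor$ must be built into the covariance inequality. A secondary technical point is the interchange of the tail-process limit (pointwise in $k$) with the Cesaro average as $\dhinterseq\to\infty$, for which a uniform-in-$k$ argument leveraging \ref{eq:conditionS} is required.
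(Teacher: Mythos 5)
Your proposal follows the paper's proof in all essentials: the same reduction to $\frac{1}{\dhinterseq\pr(\norm{\bsX_0}>\tepseq)}\cov(H_{n,0}^{\anchor},\widetilde H_{\ell})$ summed over lags, the same Ces\`aro/integral representation for the diagonal term converging to $\tailmeasurestar(H\widetilde H)$, the same vanishing of the lags $|\ell|\in\{1,2\}$, and the same $\beta$-mixing treatment of $|\ell|\ge 3$ against \eqref{eq:mixing-rates-7}. Two points where your account diverges from what the paper actually does are worth flagging. First, the paper does \emph{not} need a Rio-type quantile covariance inequality for the off-diagonal terms: it applies the plain sup-norm bound \eqref{eq:davydov-2} blockwise to get \eqref{eq:mixing-bound}, and the assumption \eqref{eq:mixing-rates-7} is calibrated precisely so that $\frac{1}{\dhinterseq\pr(\norm{\bsX_0}>\tepseq)}\sum_{j}\beta_{j\dhinterseq}\to 0$ absorbs the resulting sum; your proposed bound $C\dhinterseq\min\bigl(\beta_{(|\ell|-2)\dhinterseq},\dhinterseq\pr(\norm{\bsX_0}>\tepseq)\bigr)$ does not in any case sum against \eqref{eq:mixing-rates-7} more readily than the plain one, so this ``chief obstacle'' is a misdiagnosis rather than a genuine difficulty. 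Second, for $|\ell|\in\{1,2\}$ you invoke \Cref{lem:tailprocesstozero} to pass to a tail-process analogue with a window whose endpoints depend on $\dhinterseq$; that lemma only covers a fixed functional of $\bsX_{-\dhinterseq,\dhinterseq}$, so it does not literally license this step. The paper instead shows the pre-limit conditional expectation tends to zero directly, via \eqref{eq:covariance-vanishes-HA-H}, using the Lipschitz continuity of $\widetilde H$, $\widetilde H(\bszero)=0$ and \ref{eq:conditiondh}; relatedly, the window stays away from the origin for all $k$ with $k/\dhinterseq$ bounded away from the endpoint, i.e.\ for all but $o(\dhinterseq)$ values of $k$ (every fixed $\xi\in(0,1)$ in the integral representation), not for ``all but $O(1)$ values'' as you state. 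Neither issue changes the architecture of the proof, but both steps should be replaced by the paper's direct arguments.
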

Again, irrespectively of the choice of the anchoring map $\anchor$, the runs and disjoint blocks statistics are totally dependent and we cannot reduce the limiting variance by considering their linear combinations.

\subsection{Central limit theorem}

Let $\tepcluster$ be the Gaussian process on $L^2(\tailmeasurestar)$ with covariance
\begin{align*}
\cov(\tepcluster(H),\tepcluster(\widetilde{H})) =  \tailmeasurestar(H\widetilde{H}) \; .
\end{align*}
Recall that for a functional $H:(\Rset^d)^\Zset\to\Rset_+$ and $s>0$ we define $H_s(\bsx)=H(\bsx/s)$. Also, recall that $\exc(\bsx)=\sum_{j\in\Zset}\ind{|\bsx_j|>1}$.

Consider the class
$$\mcg = \{H^{\anchor}_s,s\in[s_0,t_0]\}=\{H(\bsx/s)\ind{\anchor(\bsx/s)=0}\ind{\norm{\bsx_0}>s},\;\;s\in[s_0,t_0]\}.$$

We need the following assumption on its random entropy.
\begin{hypothesis}\label{hypo:entropy}
	There exists a random metric $d_n$ on $\mcg$ and
	a measurable majorant $N^*(\mcg,d_n,\epsilon)$
	of the covering number $N(\mcg,d_n,\epsilon)$ such that for every sequence $\{\delta_n\}$ which
	decreases to zero,
	\begin{align}
	\int_0^{\delta_n} \sqrt{\log N^*(\mcg,d_n,\epsilon)} \rmd\epsilon\convprob 0  \; .
	\label{eq:randomentropy}
	\end{align}
\end{hypothesis}

The main result of this paper is \Cref{thm:sliding-block-clt-1}, the asymptotic normality of the appropriately normalized estimator
$\TEDclusterrandomr_{n,\dhinterseq}(H^{\anchor})$. The limiting variance agrees with the one for the disjoint blocks and sliding blocks estimators; cf. \cite{drees:rootzen:2010}, \cite[Chapter 10]{kulik:soulier:2020}, \cite{cissokho:kulik:2021}.
\begin{theorem}\label{thm:sliding-block-clt-1}
	Let $\sequence{\bsX}$ be a stationary, regularly varying $\Rset^d$-valued time series. Assume
	that~\ref{eq:rnbarFun0},
	$\beta'(\dhinterseq)$, \ref{eq:conditionS}
	and
	\begin{align}\label{eq:further-restriction-on-rn}
	\lim_{n\to\infty}\frac{\dhinterseq}{\sqrt{\statinterseqn}}=\lim_{n\to\infty}\frac{\dhinterseq}{\sqrt{n\pr(\norm{\bsX_0}>\tepseq)}}=0\;
	\end{align}
	hold. Suppose that \Cref{hypo:entropy} is satisfied.
	Fix $0<s_0<1<t_0<\infty$. Assume moreover that for $\anchor=\anchor^{(0)},\anchor^{(1)},\anchor^{(2)}$,
	\begin{subequations}
		\begin{align}
		\label{eq:bias-cluster-clt-aa}
		\lim_{n\to\infty}\sqrt{\statinterseqn}  \sup_{s\in [s_0,t_0]} \left|\frac{\pr(\norm{\bsX_0}>\tepseq s)}{\pr(\norm{\bsX_0}>\tepseq)}-s^{-\alpha}\right|
		&  = 0 \; , \\
		\label{eq:bias-cluster-clt-func}
		\lim_{n\to\infty}\sqrt{\statinterseqn}  \sup_{s\in [s_0,t_0]} |\esp[\tedclusterr(H_s^\anchor)] -\tailmeasurestar(H_s)| & = 0 \; .
		\end{align}
	\end{subequations}
	If $H\in\mca$, then
	\begin{align}\label{eq:clt-estimator-1} \sqrt{\statinterseqn}\left\{\TEDclusterrandomr_{n,\dhinterseq}(H^{\anchor})-\tailmeasurestar(H)\right\}\convdistr
	\tepcluster(H-\tailmeasurestar(H)\exc)\;.
	\end{align}
	If moreover \ref{eq:AN-small} is satisfied, then \eqref{eq:clt-estimator-1} holds for $H\in\mcb$.
\end{theorem}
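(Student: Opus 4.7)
The plan is to establish a functional central limit theorem for the pseudo-estimator process $s \mapsto \sqrt{\statinterseqn}\{\tedclusterr(H^{\anchor}_s) - \tailmeasurestar(H_s)\}$ indexed by $s \in [s_0, t_0]$, and to deduce \eqref{eq:clt-estimator-1} by substituting the random rescaling $\hat s_n := \orderstat[\norm{\bsX}]{n}{n-\statinterseqn}/\tepseq$, using the identity $\TEDclusterrandomr_{n,\dhinterseq}(H^{\anchor}) = \tedclusterr(H^{\anchor}_{\hat s_n})$. Assumption \eqref{eq:bias-cluster-clt-func} lets me replace the centering $\esp[\tedclusterr(H^\anchor_s)]$ by $\tailmeasurestar(H_s)$ uniformly in $s$, and \eqref{eq:bias-cluster-clt-aa} provides the analogous control for the tail quantile $\hat s_n$.

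For the functional CLT I would run a big-block/small-block decomposition: partition the sample into big blocks of length $\dhinterseq$ separated by small gaps of length $\ell_n$ with $\ell_n/\dhinterseq \to 0$, then couple successive big blocks to independent copies using the mixing rate \eqref{eq:mixing-rates-0}. The cost of the coupling is negligible by \eqref{eq:mixing-rates-7}, and the overlap caused by the extended runs window $[i-\dhinterseq, i+\dhinterseq]$ at block boundaries contributes an asymptotically null term controlled by \ref{eq:conditionS}. Lindeberg's CLT then applies to the resulting sum of nearly independent centered block statistics: each block statistic is bounded (in absolute value) by a constant multiple of $\dhinterseq$, which is $o(\sqrt{\statinterseqn})$ by \eqref{eq:further-restriction-on-rn}, so the Lindeberg condition is trivially satisfied. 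Finite-dimensional convergence to a Gaussian process with covariance $\tailmeasurestar(H_sH_t)$ follows from the computation behind \Cref{lem:covariance-runs}. Asymptotic equicontinuity on $[s_0,t_0]$ is obtained through a chaining argument driven by \Cref{hypo:entropy}, with the monotonicity of $\anchor^{(1)}_s$ and $\anchor^{(2)}_s$ in $s$ and the $0$-homogeneity of $\anchor^{(0)}$ providing the bracketing structure.

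Next, I would expand
\begin{align*}
\sqrt{\statinterseqn}\{\TEDclusterrandomr_{n,\dhinterseq}(H^{\anchor})-\tailmeasurestar(H)\}
&= \sqrt{\statinterseqn}\{\tedclusterr(H^{\anchor}_{\hat s_n}) - \tailmeasurestar(H_{\hat s_n})\} \\
&\phantom{{}={}}+ \sqrt{\statinterseqn}\{\tailmeasurestar(H_{\hat s_n}) - \tailmeasurestar(H)\}.
\end{align*}
Since $\tailmeasurestar$ is $\alpha$-homogeneous, $\tailmeasurestar(H_s) = s^{-\alpha}\tailmeasurestar(H)$, so the second summand equals $\sqrt{\statinterseqn}(\hat s_n^{-\alpha}-1)\tailmeasurestar(H)$. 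The tail empirical quantile satisfies $\hat s_n \convprob 1$, hence by asymptotic equicontinuity the first summand converges in distribution to $\tepcluster(H^{\anchor}) = \tepcluster(H)$; the identification of the limit with $\tepcluster(H)$ is supplied by \Cref{lem:covariance-runs}, which shows that the anchor map is invisible in the Gaussian limit. A Vervaat-type inversion applied to the tail empirical counting process $\sqrt{\statinterseqn}\{\tedclusterr(\exc^{\anchor}_s) - s^{-\alpha}\}$ gives
\begin{align*}
\sqrt{\statinterseqn}(\hat s_n^{-\alpha}-1) = -\sqrt{\statinterseqn}\{\tedclusterr(\exc^{\anchor})-1\} + o_{\pr}(1) \convdistr -\tepcluster(\exc),
\end{align*}
and joint convergence delivers the announced limit $\tepcluster(H) - \tailmeasurestar(H)\tepcluster(\exc) = \tepcluster(H - \tailmeasurestar(H)\exc)$.

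The hardest step is asymptotic equicontinuity for the class $\{H^{\anchor}_s : s \in [s_0,t_0]\}$: the functional $H^{\anchor}_s(\bsx) = H(\bsx/s)\ind{\anchor(\bsx/s)=0}\ind{\norm{\bsx_0}>s}$ couples a rescaling of $H$ with two $s$-dependent indicators, and $\anchor^{(1)}_s, \anchor^{(2)}_s$ fail to be $0$-homogeneous, so a standard Drees--Rootz\'en bracketing does not apply verbatim. I expect to treat $\anchor^{(0)}$ separately using its $0$-homogeneity and to handle $\anchor^{(1)},\anchor^{(2)}$ through a monotone bracketing in $s$, concluding via the random entropy bound \eqref{eq:randomentropy}. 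Finally, the extension from $\mca$ to $\mcb$ relies on \ref{eq:AN-small} to truncate functionals $H = \ind{K>1}$ away from $\bszero$ and, through \Cref{theo:limit-lone-Q} and an $\epsilon$-approximation, to show that the small-jump contribution is negligible uniformly in $s$.
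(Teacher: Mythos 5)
Your overall architecture matches the paper's: write $\TEDclusterrandomr_{n,\dhinterseq}(H^{\anchor})=\tedclusterr(H^{\anchor}_{\psi_n})$ with $\psi_n=\orderstat[\norm{\bsX}]{n}{n-\statinterseqn}/\tepseq$, prove a functional CLT for $s\mapsto\sqrt{\statinterseqn}\{\tedclusterr(H^{\anchor}_s)-\tailmeasurestar(H_s)\}$ on $[s_0,t_0]$, substitute $\psi_n\convprob 1$, and combine Vervaat's theorem with the $\alpha$-homogeneity $\tailmeasurestar(H_s)=s^{-\alpha}\tailmeasurestar(H)$ to get the centering correction $-\tailmeasurestar(H)\tepcluster(\exc)$. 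That part is exactly the paper's Section on the proof of the theorem.

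The genuine gap is in your block scheme for the finite-dimensional CLT. You propose big blocks of length $\dhinterseq$ separated by gaps of length $\ell_n$ with $\ell_n/\dhinterseq\to 0$, and then a coupling of successive big blocks to independent copies via $\beta$-mixing. This cannot work for runs statistics: each summand $H^{\anchor}(\bsX_{i-\dhinterseq,i+\dhinterseq}/\tepseq)$ uses a data window of width $2\dhinterseq+1$ centred at $i$, so the data underlying two consecutive big blocks of summation indices overlap over roughly $2\dhinterseq$ observations even after removing a gap of $\ell_n=o(\dhinterseq)$ indices. The dependence between consecutive block statistics is then not a mixing effect at lag $\ell_n$ at all --- the blocks literally share data --- and no coupling to independent copies is available; condition $\conditionS$ controls the resulting \emph{covariances} (as in the proof of \Cref{lem:covariance-runs}) but does not by itself yield asymptotic normality. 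The paper resolves this by taking large blocks of $z_n\dhinterseq$ consecutive summation indices with $z_n\to\infty$ and $z_n\dhinterseq/\sqrt{\statinterseqn}\to 0$, separated by small blocks of $3\dhinterseq$ indices: this guarantees that the data entering distinct large blocks are separated by at least $\dhinterseq$ (so \eqref{eq:mixing-rates-0} applies), while the small blocks are not discarded but shown to contribute a variance of order $O(1/z_n)=o(1)$; the factor $z_n\to\infty$ is essential for that negligibility, since with your blocks of length $\dhinterseq$ any separating block long enough to decouple the data windows (length $\geq 2\dhinterseq$) would carry a non-vanishing fraction of the variance. Your Lindeberg bound survives this repair because $z_n$ can be chosen so that $z_n\dhinterseq/\sqrt{\statinterseqn}\to 0$, but as written the decomposition underlying your Lindeberg CLT is not valid. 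The remaining ingredients (equicontinuity via \Cref{hypo:entropy} with separate treatment of the $0$-homogeneous $\anchor^{(0)}$ and the monotone $\anchor^{(1)},\anchor^{(2)}$, and the extension to $\mcb$ via \ref{eq:AN-small}) are in line with the paper.
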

\subsubsection{Comments on the conditions}
One chooses typically $\statinterseqn=n^{\epsilon}$ with some $\epsilon\in (0,1)$.
We note that $\beta'(\dhinterseq)$ holds if e.g. $\beta_n=O(n^{-\delta})$ with $\delta>1$ big enough or if $\beta_n$ decays logarithmically. In the latter case, we typically choose $\dhinterseq=(\log n)^{1+\delta}$ with some $\delta>0$. Recalling the choice of $\statinterseqn$ we can see that
\eqref{eq:further-restriction-on-rn} is not a very stringent assumption.

Furthermore, \eqref{eq:bias-cluster-clt-aa} controls the bias in the tail empirical process and can be related to the classical second order assumptions.

\Cref{hypo:entropy} controls the size of the class $\mcg$. We are not able to provide a general set of conditions under which this condition is satisfied, however, we will verify it for virtually all functionals $H$ that appeared in the paper. See \Cref{sec:random-entropy}.

%%%%%%%

\section{Simulation study}\label{sec:simulations}
We conducted some simulations in order to study the finite sample performance of the runs estimators for selected cluster indices. We compare their performance with the disjoint and sliding blocks estimators (see \cite{cissokho:kulik:2021}). Recall that the limiting variances are the same for all estimators. We do not have theoretical formulas for bias. We note that the bias for disjoint and sliding blocks estimators is the same, but different for runs estimators. We present only a small portion of our simulation studies; the most important findings are summarized at the end of the section.
\subsection{Stationary AR process}
We start with a simple AR(1) process. For this process we have explicit formulas for all cluster indices. Samples of size $n=1000$ are generated from AR(1) with $\alpha=4$ and $\rho=0.5, 0.9$. We perform simulations for the classical extremal index as well as for the stop-loss index.

\paragraph{Extremal index.}
For AR(1) with $\rho>0$ the extremal index is $\theta=1-\rho^\alpha$; cf. \cite[p. 396]{kulik:soulier:2020}.

\begin{itemize}
	\item \Cref{Summary table-AR-extremal} includes the results for Monte Carlo simulation for the extremal index based on disjoint blocks, sliding blocks and runs estimators with the block size $r_n=8,\;9$. We used $k=5\%$ and $10\%$ order statistics. We note that for the strong dependence ($\rho=0.9$), the sliding and disjoint blocks estimators outperform runs estimators for all considered parameters. For weak dependence ($\rho=0.5$), the results are heavily biased for all considered parameters. We note that all estimators yield almost the same variances, which is in agreement with the theoretical results obtained in the paper.
\end{itemize}
We note that the fact that stronger dependence yields smaller variability of the estimators is not surprising, cf. e.g. Figure 5 in \cite{robert:segers:ferro:2009}.

\paragraph{Stop-loss index.} For AR(1) with $\rho>0$ the formula for the stop-loss index is given in \cite[p. 619]{kulik:soulier:2020}:
\begin{align}\label{eq:stop-loss}
\theta_{{\rm stop-loss}}(S)=(1-\rho^{\alpha})\pr\left(\sum_{j=0}^\infty(\rho^j Y_0-1)_+>S\right)\;,
\end{align}
where $Y_0$ is a Pareto random variable with the parameter $\alpha$.

\begin{itemize}
	\item At the first step we use the formula \eqref{eq:stop-loss} and performed the Monte-Carlo simulation to obtain the approximate value of the stop-loss index.
	\item With this in mind, we performed simulation studies for $k=10\%$ and $k=40\%$. As noted in \cite{cissokho:kulik:2021}, the stop-loss index estimation requires a higher number of order statistics.
	We notice then (see \Cref{Summary table stop loss}) that, as opposed to the extremal index,  the runs estimator with the anchoring map $C^{(0)}$ performs better than the disjoint and the sliding blocks in case of weaker dependence. Indeed, the weaker dependence ($\rho=0.5$) yields a good estimation for runs estimators for any given block size, while for the strong dependence ($\rho=0.9$), the simulation results are rather poor for all the estimators. This may be quite intuitive, since the stop-loss functional is based on \textit{sums} of large values. On the other side, all the estimators perform poorly for strong dependence ($\rho=0.9$) as a result of bias.
	\item The box plots in \Cref{plot:box plots} is based again on Monte Carlo simulations. The following parameters are used: $\rho=0.5$, $\alpha=4$ and the block size $r_n=8,\;9$ along with $k=10\%$ and $k=40\%$. We notice again that in both cases the runs estimator yields acceptable result as opposed to disjoint and sliding blocks estimators.
\end{itemize}

\subsection{Stationary ARCH process}
We consider a stationary ARCH(1) process defined by $X_j^2=\sqrt{\beta+\lambda X_{j-1}^2}Z_j$, where $\{Z_j,j\in\Zset\}$ are i.i.d standard normal random variables. For $\lambda=0.9$ the extremal index is $\theta=0.612$ (see \cite[p. 480]{embrechts:kluppelberg:mikosch:1997}).
\begin{itemize}
	\item Monte Carlo results are included in Table \ref{Summary table-ARCH-extremal}. In this case both disjoint and sliding blocks estimators yield better results as compared to runs. This is primarily due to bias.
\end{itemize}

In summary, \textbf{
	\begin{itemize}
		\item All estimators (blocks and runs) have the same variance, which is in line with the theoretical results.
		\item
		For the extremal index, runs estimators are inferior as compared to blocks estimators. This is primarily due to bias.
		\item
		For the stop-loss index, runs estimators are superior, yielding lower (simulated) bias as compared to the blocks estimators.
	\end{itemize}
}

%%%%%%%

\section{Proofs}\label{sec:proofs}
In \Cref{sec:conditional-convergence} we prove several lemmas on conditional convergence when anchoring maps are involved. One needs to distinguish between finite blocks (when the conditional convergence follows basically from the conditional convergence to the tail process) and growing blocks (when the anticlustering condition is needed).

In \Cref{sec:covariances} we prove the asymptotic behaviour of the covariances of runs estimators, that is we prove
\Cref{lem:covariance-runs} and \Cref{lem:covariance-runs-blocks}.
\Cref{sec:fclt} deals with the empirical cluster process of runs statistics.
The functional central limit theorem (\Cref{thm:sliding-blocks-process-clt}) established there yields immediately the central limit theorem for runs estimators. See \Cref{sec:clt}.
A long proof of \Cref{thm:sliding-blocks-process-clt} is given in \Cref{sec:fidi,sec:tightness}. Finally, in \Cref{sec:random-entropy} we discuss the random entropy assumption.
\subsection{Mixing}
We recall the covariance inequality for bounded, beta-mixing random variables (in fact, the inequality holds for $\alpha$-mixing).
Let $\beta({\mcf_1},{\mcf_2})$ be the $\beta$-mixing coefficient between two sigma fields.
Then (cf. \cite{ibragimov:1962})
\begin{align}\label{eq:davydov-2}
|\cov(H(Z_1),H(Z_2))|\leq \constant\ \|H\|_{\infty} \|\widetilde{H}\|_{\infty}\beta(\sigma(Z_1),\sigma(Z_2))\;.
\end{align}
In \eqref{eq:davydov-2} the constant $\constant$ does not depend on $H,\widetilde{H}$.

\subsection{Conditional convergence}\label{sec:conditional-convergence}
\begin{lemma}\label{lem:fidi-convergence-anchoring}
	Let $\anchor=\anchor^{(0)},\anchor^{(1)},\anchor^{(2)}$. Then for $h\ \in\Zset$,
	\begin{align*}
	\lim_{n\to\infty}\pr(\anchor(\bsX_{h-r,h+r}/\tepseq)=h\mid \norm{\bsX_0}>\tepseq)=\pr(\anchor(\bsY_{h-r,h+r})=h)\;.
	\end{align*}
\end{lemma}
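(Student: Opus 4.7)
The plan is to reduce the lemma to a Portmanteau-theorem application. By the defining property of the tail process recalled in \Cref{sec:tail-process}, the conditional distribution of $\tepseq^{-1}\bsX_{h-r,h+r}$ given $\norm{\bsX_0}>\tepseq$ converges weakly on $(\Rset^d)^{2r+1}$ to the law of $\bsY_{h-r,h+r}$. I will rewrite each event $\{\anchor(\tepseq^{-1}\bsX_{h-r,h+r})=h\}$ as a Borel set $A_{\anchor}\subset(\Rset^d)^{2r+1}$ depending only on the norms of the coordinates, verify that $\pr(\bsY_{h-r,h+r}\in\partial A_{\anchor})=0$, and then invoke Portmanteau.

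\textbf{Key steps.} The descriptions of the three anchoring maps (following the bulleted characterizations just after their definitions in \Cref{sec:anchoring}) yield
\begin{align*}
A_{\anchor^{(0)}}&=\{\bsy:\bsy_{h-r,h-1}^*<\norm{\bsy_h},\ \bsy_{h+1,h+r}^*\leq\norm{\bsy_h}\}\;,\\
A_{\anchor^{(1)}}&=\{\bsy:\bsy_{h-r,h-1}^*\leq 1,\ \norm{\bsy_h}>1\}\;,\\
A_{\anchor^{(2)}}&=\{\bsy:\bsy_{h+1,h+r}^*\leq 1,\ \norm{\bsy_h}>1\}\;,
\end{align*}
after using that $\anchor^{(0)}$ is $0$-homogeneous (so the event is scale-free) while for $\anchor^{(1)},\anchor^{(2)}$ dividing $\bsX_{h-r,h+r}$ by $\tepseq$ turns the threshold $\tepseq$ into $1$ on the scaled coordinates. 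For $\anchor=\anchor^{(1)},\anchor^{(2)}$ the boundary is contained in $\bigcup_{j=h-r}^{h+r}\{\norm{\bsy_j}=1\}$; using the identity $\bsY_j=\norm{\bsY_0}\bsTheta_j$ with $\norm{\bsY_0}$ Pareto of tail $\alpha$ and independent of $\bsTheta$, a conditioning argument shows $\pr(\norm{\bsY_j}=1)=0$ for every $j$ (on $\{\norm{\bsTheta_j}=0\}$ we have $\norm{\bsY_j}=0$; on its complement the equation forces $\norm{\bsY_0}=1/\norm{\bsTheta_j}$, an event of Pareto-probability zero), and a finite union bound kills the boundary. For $\anchor=\anchor^{(0)}$ the boundary is contained in $\{\bsy_{h-r,h-1}^*=\norm{\bsy_h}\}\cup\{\bsy_{h+1,h+r}^*=\norm{\bsy_h}\}$; the factor $\norm{\bsY_0}$ cancels and these equalities reduce to ties among the $\norm{\bsTheta_j}$'s inside the finite window, which vanish under the continuity/uniqueness-of-argmax property built into the paper's definition of anchoring maps. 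Portmanteau then gives the required convergence of probabilities.

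\textbf{Main obstacle.} The delicate step is the boundary analysis for $\anchor^{(0)}$: the scaling argument that trivially handles the unit-threshold cases only reduces the tie sets to conditions on the spectral tail process rather than annihilating them, and one therefore has to appeal to the standing assumption (implicit in requiring that $\anchor^{(0)}(\bsY)$ takes integer values almost surely) that guarantees almost-sure uniqueness of the argmax on a finite window. Once the null-boundary property is in place for all three sets, the Portmanteau theorem delivers $\mu_n(A_{\anchor})\to\mu(A_{\anchor})$ in a single line and completes the proof simultaneously for $\anchor=\anchor^{(0)},\anchor^{(1)},\anchor^{(2)}$.
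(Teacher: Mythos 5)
Your proof follows essentially the same route as the paper's: both rewrite $\{\anchor(\bsX_{h-r,h+r}/\tepseq)=h\}$ as an explicit condition on the norms of the scaled coordinates and pass to the limit via the finite-dimensional weak convergence to the tail process; the paper does this in two lines for $\anchor^{(0)}$ only and leaves the continuity-set verification implicit, whereas you make the Portmanteau step and the null-boundary check explicit. Your boundary argument for $\anchor^{(1)}$ and $\anchor^{(2)}$ (Pareto radius independent of $\bsTheta$, hence $\pr(\norm{\bsY_j}=1)=0$) is correct and is a genuine gain in rigour over what is written. The one weak point is $\anchor^{(0)}$: there the boundary is contained in the tie set $\{\bsY_{h-r,h-1}^*=\norm{\bsY_h}\}\cup\{\bsY_{h+1,h+r}^*=\norm{\bsY_h}\}$, which after cancelling $\norm{\bsY_0}$ becomes a tie among the $\norm{\bsTheta_j}$, and the assumption you invoke --- that $\anchor^{(0)}(\bsY)\in\Zset$ almost surely --- does \emph{not} imply such ties are null: the infargmax is integer-valued as soon as the supremum is attained, whether or not the argmax is unique, and for instance for the moving average $X_j=Z_j+Z_{j-1}$ one has $\norm{\bsTheta_1}=\norm{\bsTheta_0}$ with positive probability. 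So for $\anchor^{(0)}$ a genuine no-ties (almost-sure continuity with respect to the law of $\bsY$) hypothesis is needed to kill the boundary; the paper's own proof silently assumes the same thing, so this is a shared gap rather than a defect specific to your approach, but the justification you give for it is not the right one.
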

\begin{proof}
	We will do the proof for
	$\anchor^{(0)}$ only. By the definition of the tail process
	\begin{align*}
	&\lim_{n\to\infty}\pr(\anchor^{(0)}(\bsX_{h-r,h+r}/\tepseq)=h\mid \norm{\bsX_0}>\tepseq)\\
	%&=\pr(\bsX_{h-r,h-1}^*<\norm{\bsX_h},\bsX_{h+1,h+r}^*\leq\norm{\bsX_h}\mid \norm{\bsX_0}>\tepseq)\\
	&=\lim_{n\to\infty}\pr(\bsX_{h-r,h-1}^*/\tepseq<\norm{\bsX_h}/\tepseq,\bsX_{h+1,h+r}^*/\tepseq\leq\norm{\bsX_h}/\tepseq\mid \norm{\bsX_0}>\tepseq)\\
	&= \pr(\bsY_{h-r,h-1}^*<\norm{\bsY_h},\bsY_{h+1,h+r}^*\leq\norm{\bsY_h})=\pr(\anchor^{(0)}(\bsY_{h-r,h+r})=h)\;.
	\end{align*}
	%For $\anchor^{(1)}$, we have
	%\begin{align*}
	%&\pr(\anchor^{(1)}(\bsX_{h-r,h+r}/\tepseq)=h\mid \norm{\bsX_0}>\tepseq)=\pr(\bsX_{h-r,h-1}^*\leq \tepseq,\norm{\bsX_h}>\tepseq\mid \norm{\bsX_0}>\tepseq)\\
	%&\to \pr(\bsY_{h-r,h-1}^*\leq 1,\norm{\bsY_h}>1)=\pr(\anchor^{(1)}(\bsY_{h-r,h+r})=h)\;.
	%%&=\pr(\bsX_{h-r,h-1}^*/\tepseq<\norm{\bsX_h}/\tepseq,\bsX_{h+1,h+r}^*/\tepseq\leq\norm{\bsX_h}/\tepseq\mid \norm{\bsX_0}>\tepseq)\\
	%\end{align*}
	%Similarly for $\anchor^{(2)}$, we have
	%\begin{align*}
	%&\pr(\anchor^{(2)}(\bsX_{h-r,h+r}/\tepseq)=h\mid \norm{\bsX_0}>\tepseq)=\pr(\bsX_{h+1,h+r}^*\leq \tepseq,\norm{\bsX_h}>\tepseq\mid \norm{\bsX_0}>\tepseq)\\
	%&\to \pr(\bsY_{h+1,h+r}^*\leq 1,\norm{\bsY_h}>1)=\pr(\anchor^{(2)}(\bsY_{h+r,h+r})=h)\;.
	%%&=\pr(\bsX_{h-r,h-1}^*/\tepseq<\norm{\bsX_h}/\tepseq,\bsX_{h+1,h+r}^*/\tepseq\leq\norm{\bsX_h}/\tepseq\mid \norm{\bsX_0}>\tepseq)
	%\end{align*}
\end{proof}
\begin{lemma}\label{lem:fidi-convergence-anchoring-twodim}
	Let $\anchor,\widetilde{\anchor}$ be any of the anchoring maps $\anchor^{(0)},\anchor^{(1)},\anchor^{(2)}$. Then for $h\ \in\Zset$,
	\begin{align}\label{eq:joint-prob-anchor}
	&\lim_{n\to\infty}\pr(\anchor(\bsX_{-r,r}/\tepseq)=0,\widetilde\anchor(\bsX_{h-r,h+r}/\tepseq)=h\mid \norm{\bsX_0}>\tepseq)
	\nonumber\\
	&=
	\pr(\anchor(\bsY_{-r,r})=0,\widetilde\anchor(\bsY_{h-r,h+r})=h)\;.
	\end{align}
\end{lemma}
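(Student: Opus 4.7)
The plan is to extend the proof of \Cref{lem:fidi-convergence-anchoring} from a single anchoring event to a joint anchoring event by the same mechanism: reduce to a finite-dimensional continuity-set question and invoke the defining conditional weak convergence of the tail process. Since the nine combinations of $(\anchor,\widetilde{\anchor})$ all proceed by the same reasoning, I would write out a representative case in detail (say $\anchor=\widetilde{\anchor}=\anchor^{(0)}$, which requires the most care because the inequalities involve the random quantities $\norm{\bsX_0}$ and $\norm{\bsX_h}$ rather than the fixed threshold $1$) and indicate the others briefly.

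First, I would unfold both anchoring conditions into finite conjunctions of strict/non-strict inequalities. For each admissible anchoring map, $\{\anchor(\bsx_{-r,r})=0\}$ is a conjunction of finitely many inequalities involving the norms $\norm{\bsx_j}$ for $j\in\{-r,\ldots,r\}$ and either the constant $1$ (for $\anchor^{(1)},\anchor^{(2)}$) or $\norm{\bsx_0}$ (for $\anchor^{(0)}$); likewise for the shifted event $\{\widetilde{\anchor}(\bsx_{h-r,h+r})=h\}$. Writing $I=\{-r,\ldots,r\}\cup\{h-r,\ldots,h+r\}$, a finite set independent of $n$, the joint event is a Borel subset of $(\Rset^d)^I$ in the rescaled coordinates $(\tepseq^{-1}\bsX_j)_{j\in I}$: scaling by $\tepseq^{-1}$ turns $\norm{\bsX_j}>\tepseq$ into $\norm{\tepseq^{-1}\bsX_j}>1$, while the homogeneous inequalities appearing in $\anchor^{(0)}$ are scale-free.

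Second, by the defining property of the tail process, the conditional law of $(\tepseq^{-1}\bsX_j)_{j\in I}$ given $\norm{\bsX_0}>\tepseq$ converges weakly to the law of $(\bsY_j)_{j\in I}$; the portmanteau theorem then reduces the lemma to verifying that the joint anchoring event is a continuity set of the limiting law. This last step is the main (and only real) obstacle: the boundary of the event consists of configurations with at least one defining inequality collapsing to equality, i.e.\ $\norm{\bsY_j}=\norm{\bsY_0}$, $\norm{\bsY_j}=\norm{\bsY_h}$, or $\norm{\bsY_j}=1$ for some $j\in I$. I would dispatch it by the same argument tacitly used in \Cref{lem:fidi-convergence-anchoring}, namely a sandwiching approximation: perturb each threshold by $\pm\delta$ to produce continuous upper and lower indicators whose expectations under the limiting law differ by $O(\delta)$ (using the Pareto distribution of $|\bsY_0|$ together with the fact that $\bsY=|\bsY_0|\bsTheta$ with $\bsTheta$ independent of $|\bsY_0|$, which rules out positive-measure coincidences in the joint law), and then let $\delta\downarrow 0$. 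This yields the claimed identity on the enlarged but still finite window $I$, completing the proof.
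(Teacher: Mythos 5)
Your proposal is correct and takes essentially the same route as the paper: the paper also unfolds the two anchoring events into a finite conjunction of inequalities on the fixed window and passes to the limit via the defining conditional convergence to the tail process (writing out only one representative pair of maps, just as you propose). One small caveat: your justification of the continuity-set step via the independence of $\bsTheta$ and $|\bsY_0|$ does rule out coincidences with the fixed level $1$, but not ties of the form $\norm{\bsY_j}=\norm{\bsY_0}$ (equivalently $\norm{\bsTheta_j}=1$), which can have positive probability and are exactly what the strict/non-strict inequalities in $\anchor^{(0)}$ are meant to arbitrate — though the paper's own proof is silent on this point as well.
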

\begin{proof}
	We verify the statement for one combination of the anchoring maps only.
	For  $\anchor^{(1)}$ and  $\anchor^{(2)}$ , we have
	\begin{align*}
	&\lim_{n\to\infty}\pr(\anchor^{(1)}(\bsX_{h-r,h+r}/\tepseq)=h,\anchor^{(2)}(\bsX_{-r,r}/\tepseq)=0\mid \norm{\bsX_0}>\tepseq)\\
	&=\pr(\bsX_{h-r,h-1}^*\leq \tepseq,\norm{\bsX_h}>\tepseq, \bsX_{1,r}^*\leq \tepseq,\norm{\bsX_0}>\tepseq\mid \norm{\bsX_0}>\tepseq )\\
	&= \pr(\bsY_{h-r,h-1}^*\leq 1,\norm{\bsY_h}>1,\bsY_{1,r}^*\leq 1,\norm{\bsY_0}>1)\;.\\
	&=\pr(\anchor^{(1)}(\bsY_{h-r,h+r})=h,\anchor^{(2)}(\bsY_{-r,r})=0)\;.
	\end{align*}	
	%For  $\anchor^{(0)}$ and  $\anchor^{(1)}$ , we have
	%\begin{align*}
	%&\lim_{n\to\infty}\pr(\anchor^{(0)}(\bsX_{h-r,h+r}/\tepseq)=h,\anchor^{(1)}(\bsX_{-r,r}/\tepseq)=0\mid \norm{\bsX_0}>\tepseq)\\
	%&=\pr(\bsX_{h-r,h-1}^*/\tepseq<\norm{\bsX_h}/\tepseq,\bsX_{h+1,h+r}^*/\tepseq\leq\norm{\bsX_h}/\tepseq, \bsX_{-r,-1}^*\leq\tepseq,\norm{\bsX_0}>\tepseq\mid \norm{\bsX_0}>\tepseq )\\
	%&\to \pr(\bsY_{h-r,h-1}^*<\norm{\bsY_h},\bsY_{h+1,h+r}^*\leq\norm{\bsY_h},\bsY_{-r,-1}^*\leq 1,\norm{\bsY_0}>1)\;.\\
	%&=\pr(\anchor^{(0)}(\bsY_{h-r,h+r})=h,\anchor^{(1)}(\bsY_{-r,r})=0)\;.
	%\end{align*}
\end{proof}
%\textcolor{blue}{THIS WILL GO TO THESIS}
%With help of \Cref{lem:fidi-convergence-anchoring} we can also verify \eqref{eq:application-time-change}.
%Indeed, applying first \eqref{eq:anchor-shift}, then stationarity, An(i)
%\begin{align*}
%&\pr\left(\anchor(\bsX_{h-r,h+r})=h\mid \norm{\bsX_0}>\tepseq\right)\\
%&= \frac{\pr\left(\anchor(\backshift^{-h}\bsX_{-r,r})=0, \norm{\bsX_0}>\tepseq\right)}{\pr(\norm{\bsX_0}>\tepseq)}\\
%&= \frac{\pr\left(\anchor(\backshift^h\backshift^{-h}\bsX_{-r,r})=0, \norm{\backshift^h\bsX_0}>\tepseq\right)}{\pr(\norm{\bsX_0}>\tepseq)}=
%\frac{\pr\left(\anchor(\bsX_{-r,r})=0, \norm{\bsX_{-h}}>\tepseq\right)}{\pr(\norm{\bsX_0}>\tepseq)}\\
%&=\frac{\pr\left(\anchor(\bsX_{-r,r})=0, \norm{\bsX_{-h}}>\tepseq,\norm{\bsX_0}>\tepseq\right)}{\pr(\norm{\bsX_0}>\tepseq)}=
%\pr\left(\anchor(\bsX_{-r,r})=0, \norm{\bsX_{-h}}>\tepseq\mid\norm{\bsX_0}>\tepseq\right)\\
%&\to \pr(\anchor(\bsY_{-r,r})=0,\norm{\bsY_{-h}}>1)\;.
%\end{align*}

Recall the definition of $H^\anchor$ in \eqref{eq:shift-representation}.
Let $h_n$ be a sequence of integers diverging to infinity. % such that $h_n=O(\dhinterseq)$.
For bounded $H,\widetilde{H}$,
a direct application of \ref{eq:conditiondh} gives
\begin{align*}%\label{eq:covariance-vanishes}
&\lim_{n\to\infty}
\esp\left[H^{\anchor}\left(\bsX_{-\dhinterseq,\dhinterseq}/\tepseq\right)
\widetilde{H}^{\widetilde{\anchor}}\left(\bsX_{h_n-\dhinterseq,h_n+\dhinterseq}/\tepseq\right)
\mid \norm{\bsX_0}>\tepseq\right]\nonumber\\
&\leq \|H\|\|\widetilde{H}\| \lim_{n\to\infty}
\pr(\norm{\bsX_{h_n}}>\tepseq\mid \norm{\bsX_0}>\tepseq)=0\;.
\end{align*}
Likewise, if $H,\widetilde{H}$ are bounded and $\widetilde{H}\in\mcl$ is such that $\widetilde{H}(\bszero)=0$, then the Lipschitz continuity of $\widetilde{H}$ and \ref{eq:conditiondh} imply
\begin{align}\label{eq:covariance-vanishes-HA-H}
&\lim_{n\to\infty}
\esp\left[H^{\anchor}\left(\bsX_{-\dhinterseq,\dhinterseq}/\tepseq\right)
\widetilde{H}\left(\bsX_{h_n,h_n+\dhinterseq}/\tepseq\right)
\mid \norm{\bsX_0}>\tepseq\right]\nonumber\\
&\leq \|H\| \lim_{n\to\infty}
\esp\left[\widetilde{H}\left(\bsX_{h_n,h_n+\dhinterseq}/\tepseq\right)-\widetilde{H}(\bszero)\mid \norm{\bsX_0}>\tepseq\right]\nonumber \\
&\leq
\|H\| \|\widetilde{H}\|\lim_{n\to\infty}
\pr\left(\bsX_{h_n,h_n+\dhinterseq}^*>\tepseq\mid \norm{\bsX_0}>\tepseq\right)
=0\;.
\end{align}
The statement in \eqref{eq:covariance-vanishes-HA-H} is also valid if $\widetilde{H}\left(\bsX_{h_n,h_n+\dhinterseq}/\tepseq\right)$ is replaced with
$\widetilde{H}\left(\bsX_{-h_n,-h_n-\dhinterseq}/\tepseq\right)$.

On the other hand, for fixed $h$ we have the following lemma that extends
\Cref{lem:fidi-convergence-anchoring-twodim} from fixed $r$ to $r_n\to\infty$. For this, we need to assume additionally that \ref{eq:conditiondh} holds.
\begin{lemma}\label{lem:cond-conv-H}
	Assume that \ref{eq:conditiondh} holds. Let
	$H,\widetilde{H}\in \mcl$ and $\anchor,\widetilde{\anchor}$ be anchoring maps.
	Then
	\begin{align}\label{eq:limiting-covariance-fixed-h}
	&\lim_{n\to\infty}
	\esp\left[H^{\anchor}\left(\bsX_{-\dhinterseq,\dhinterseq}/\tepseq\right)
	\widetilde{H}^{\widetilde\anchor}\left(\bsX_{h-\dhinterseq,h+\dhinterseq}/\tepseq\right)
	\mid \norm{\bsX_0}>\tepseq\right]\nonumber\\
	&= \esp\left[H(\bsY)\widetilde{H}(\bsY)\ind{\anchor(\bsY)=0}\ind{\widetilde{\anchor}(\bsY)=h}\right]=:{\mathcal I}(H,\widetilde{H},\anchor,\widetilde{\anchor};h)\;.
	\end{align}
\end{lemma}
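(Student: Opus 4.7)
The plan is to mimic the unanchored conditional limit of \Cref{lem:tailprocesstozero} by a standard two-step truncation: first replace both $\dhinterseq$-blocks with blocks of a fixed size $m>|h|$, then exploit finite-dimensional conditional convergence to the tail process $\bsY_{-m,h+m}$ together with the continuous mapping theorem, and finally let $m\to\infty$. Observe that the combined integrand depends only on the single window $\bsX_{-\dhinterseq,h+\dhinterseq}$, so the whole argument takes place on one enlarging block.

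For the truncation step, the difference of the two integrands splits into two pieces. On the one hand, for $H,\widetilde H\in\mcl$ the Lipschitz alternative gives
\begin{align*}
|H(\bsX_{-\dhinterseq,\dhinterseq}/\tepseq) - H(\bsX_{-m,m}/\tepseq)| \leq L\max_{m<|j|\leq \dhinterseq} \norm{\bsX_j}/\tepseq,
\end{align*}
so combining with $\|H\|_\infty$ via the standard split
$L\epsilon + 2\|H\|_\infty\,\pr(\max_{m<|j|\leq \dhinterseq}\norm{\bsX_j}>\epsilon\tepseq\mid \norm{\bsX_0}>\tepseq)$
yields negligibility by \ref{eq:conditiondh} followed by $\epsilon\to 0$; the a.s.-continuity alternative reduces to the same estimate through a Portmanteau argument. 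On the other hand, for each of the three anchoring maps $\anchor^{(0)},\anchor^{(1)},\anchor^{(2)}$ one has the monotonicity noted in \Cref{sec:anchoring}, $\ind{\anchor(\bsX_{-\dhinterseq,\dhinterseq}/\tepseq)=0}\leq \ind{\anchor(\bsX_{-m,m}/\tepseq)=0}$, together with the case-by-case bound
\begin{align*}
\ind{\anchor(\bsX_{-m,m}/\tepseq)=0}-\ind{\anchor(\bsX_{-\dhinterseq,\dhinterseq}/\tepseq)=0}\leq \ind{\max_{m<|j|\leq \dhinterseq}\norm{\bsX_j}>\tepseq},
\end{align*}
whose conditional expectation vanishes by \ref{eq:conditiondh}. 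The shifted factor $\widetilde H^{\widetilde\anchor}$ is treated identically after applying the shift identity \eqref{eq:anchor-shift}.

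For fixed $m$ the joint conditional weak convergence $(\tepseq^{-1}\bsX_{-m,h+m}\mid \norm{\bsX_0}>\tepseq)\convdistr \bsY_{-m,h+m}$ combined with the continuous mapping theorem yields
\begin{align*}
\esp[H^\anchor(\bsX_{-m,m}/\tepseq)\widetilde H^{\widetilde\anchor}(\bsX_{h-m,h+m}/\tepseq)\mid \norm{\bsX_0}>\tepseq]\longrightarrow \esp[H^\anchor(\bsY_{-m,m})\widetilde H^{\widetilde\anchor}(\bsY_{h-m,h+m})],
\end{align*}
because the integrand is bounded and is continuous at every $\bsy$ having no coordinate of norm exactly $1$ and, for $\anchor^{(0)}$, unique argmax; these discontinuity sets have $\bsY$-probability zero since $\norm{\bsY_j}=\norm{\bsY_0}\,\norm{\bsTheta_j}$ admits a continuous law on $(0,\infty)$. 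Finally, letting $m\to\infty$ and using $\bsY\in\lzero$ a.s.\ (from \ref{eq:conditiondh}), one checks for each of the three maps that $\ind{\anchor(\bsY_{-m,m})=0}\to\ind{\anchor(\bsY)=0}$ and $\ind{\widetilde\anchor(\bsY_{h-m,h+m})=h}\to\ind{\widetilde\anchor(\bsY)=h}$ a.s., so dominated convergence delivers the target $\esp[H(\bsY)\widetilde H(\bsY)\ind{\anchor(\bsY)=0}\ind{\widetilde\anchor(\bsY)=h}]$; the surplus factors $\ind{\norm{\bsY_0}>1}\ind{\norm{\bsY_h}>1}$ built into $H^\anchor\widetilde H^{\widetilde\anchor}$ are automatic on the event $\{\anchor(\bsY)=0,\widetilde\anchor(\bsY)=h\}$ for the three anchoring maps.

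The main obstacle is the truncation step: the monotonicity of $\ind{\anchor(\cdot)=0}$ under enlargement of the window is exactly the structural property flagged in \Cref{sec:anchoring} as holding only for $\anchor^{(0)},\anchor^{(1)},\anchor^{(2)}$, so the argument cannot easily be extended to arbitrary anchoring maps and must be verified map-by-map before the anticlustering condition can be applied.
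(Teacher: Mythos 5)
Your proof is correct and follows essentially the same route as the paper's: fixed-window conditional convergence to the tail process, followed by a truncation step in which the change of the anchoring indicator under enlargement of the window is controlled via property An(i) (equivalently, the monotonicity valid for $\anchor^{(0)},\anchor^{(1)},\anchor^{(2)}$) together with the anticlustering condition \ref{eq:conditiondh}, and finally a passage to $m\to\infty$ using $\bsY\in\lzero$. The only organizational difference is that the paper first reduces to $H=\widetilde{H}\equiv 1$ by citing the unanchored case from \cite{cissokho:kulik:2021} and then treats the bivariate anchoring event via a nested-events inequality, whereas you carry the bounded factors $H,\widetilde{H}$ through directly; both arguments rest on the same tacit assumption that boundary events such as $\{\bsY_{1,m}^*=\norm{\bsY_0}\}$ for $\anchor^{(0)}$ are $\bsY$-null.
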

Before we prove the above lemma, we make several comments.

First, as a corollary we obtain
\begin{align}\label{eq:1dim-conv}
\lim_{n\to\infty}
\esp\left[H^{\anchor}\left(\bsX_{-\dhinterseq,\dhinterseq}/\tepseq\right)
\mid \norm{\bsX_0}>\tepseq\right]=\tailmeasurestar(H)\;
\end{align}
and
\begin{align}\label{eq:1dim-conv-anchor}
\lim_{n\to\infty}
\pr(\anchor(\bsX_{-\dhinterseq,\dhinterseq}/\tepseq)=0\mid \norm{\bsX_0}>\tepseq)=\tailmeasurestar(1)=1\;.
\end{align}
Indeed, if we take $\widetilde{H}\equiv 1$, $\anchor=\widetilde{\anchor}$ and $h=0$, then by
\eqref{eq:HA-on-Y},
\begin{align*}
{\mathcal I}(H,1,\anchor,{\anchor};0)&=
\esp\left[H(\bsY)\ind{\anchor(\bsY)=0}\right]\\
&=\esp\left[H(\bsY)\ind{\anchor(\bsY)=0}\ind{\norm{\bsY_0}>1}\right]
=\tailmeasurestar(H)\;.
\end{align*}

Since the definition of $\tailmeasurestar$ does not depend on the anchoring map, we have
${\mathcal I}(H,\widetilde{H},\anchor,\widetilde{\anchor};0)=\tailmeasurestar(H\widetilde{H})$ for any $\anchor,\widetilde{\anchor}$.
Since the value of any anchoring map is uniquely determined, we conclude immediately that  ${\mathcal I}(H,\widetilde{H},\anchor,{\anchor};h)=0$ for $h\not=0$.
Furthermore,
\begin{align*}
&\sum_{h\in\Zset}\esp\left[H(\bsY)\widetilde{H}(\bsY)\ind{\anchor(\bsY)=0}\ind{\widetilde{\anchor}(\bsY)=h}\right]\\
&=\esp\left[H(\bsY)\widetilde{H}(\bsY)\ind{\anchor(\bsY)=0}\ind{\widetilde{\anchor}(\bsY)\in\Zset}\right]=
\tailmeasurestar(H\widetilde{H})\;.
\end{align*}
This implies that for arbitrary anchoring maps $\anchor$, $\widetilde\anchor$,
\begin{align}\label{eq:covariance-zero}
{\mathcal I}(H,\widetilde{H},\anchor,\widetilde{\anchor};h)=0\;, \ \  h\not=0\;.
\end{align}
\begin{proof}[Proof of \Cref{lem:cond-conv-H}]
	In \cite{cissokho:kulik:2021} we proved a version of the lemma without anchoring maps included. Since $\bsx\to \ind{\anchor(\bsx)=0}$ is not Lipschitz continuous, Lemma 6.6 in \cite{cissokho:kulik:2021} is not directly applicable. As such, we will focus on the anchoring maps only, assuming $H=\widetilde{H}\equiv 1$.
	
	In the first step we prove that for all $h\in\Zset$
	\begin{align}\label{eq:step-1}
	\lim_{n\to\infty}\pr(\anchor(\bsX_{h-\dhinterseq,h+\dhinterseq}/\tepseq)=h\mid \norm{\bsX_0}>\tepseq)&=\pr(\anchor(\bsY)=h)\;.
	\end{align}
	We already know that (cf. \Cref{lem:fidi-convergence-anchoring})
	\begin{align*}
	\lim_{n\to\infty}\pr(\anchor(\bsX_{h-r,h+r}/\tepseq)=h\mid \norm{\bsX_0}>\tepseq)=\pr(\anchor(\bsY_{h-r,h+r})=h)\;.
	\end{align*}
	Since $\dhinterseq\to\infty$ and $r$ is fixed we can assume $0<r<\dhinterseq$. Now, for $\anchor=\anchor^{(0)},\anchor^{(1)}, \anchor^{(2)}$ the value of
	\begin{align*}
	\pr(\anchor(\bsY_{h-r,h+r})=h)-\pr(\anchor(\bsY_{h-\dhinterseq,h+\dhinterseq})=h)
	\end{align*}
	is \nonzero\ if and only if $h+r<\anchor(\bsY_{h-\dhinterseq,h+\dhinterseq})\leq h+\dhinterseq$ or $h-\dhinterseq\leq \anchor(\bsY_{h-\dhinterseq,h+\dhinterseq})<h-r$.
	Indeed, take for simplicity $h=0$. If $\anchor^{(1)}(\bsY_{-r,r})=0$ and $\anchor^{(1)}(\bsY_{-\dhinterseq,\dhinterseq})\not=0$, then
	$\bsY_{-r,-1}^*\leq 1$, $\norm{\bsY_0}>1$ and then $\bsY_{-\dhinterseq,-r-1}^*>1$, while
	$\anchor^{(1)}(\bsY_{-r,r})\not=0$ and $\anchor^{(1)}(\bsY_{-\dhinterseq,\dhinterseq})=0$ cannot happen. The same reasoning applied to the other anchoring maps.
	
	Coming back to the general case of $h$, the first property of the anchoring map implies that $\norm{\bsY_j}>1$ for some $j\in\{h+r+1,\ldots,h+\dhinterseq\}\cup
	\{h-\dhinterseq,\ldots,h-r-1\}$.
	Since we let $r,\dhinterseq\to\infty$, we can assume that $h<r$.
	Thus, using the property An(i) of the anchoring maps,
	\begin{align}\label{eq:approx-1}
	&\lim_{r\to\infty}{\lim_{n\to\infty}}|\pr(\anchor(\bsY_{h-r,h+r})=h)-\pr(\anchor(\bsY_{h-\dhinterseq,h+\dhinterseq})=h)|\nonumber\\
	&\leq
	\lim_{r\to\infty}{\lim_{n\to\infty}}
	\pr\left(\max\left\{\max_{h+r\leq j\leq h+\dhinterseq}\norm{\bsY_j},
	\max_{h-\dhinterseq\leq j\leq h-r}\norm{\bsY_j}\right\}> 1\right)=0
	\end{align}
	since \ref{eq:conditiondh} implies $\bsY_j\to \bszero$ almost surely as $|j|\to\infty$. Also, the vanishing property of $\bsY_j$ and the property An(i) of the anchoring map imply that
	\begin{align*}
	\lim_{n\to\infty}\pr(\anchor(\bsY_{h-\dhinterseq,h+\dhinterseq})=h)=\pr(\anchor(\bsY)=h)\;.
	\end{align*}

	Similarly,
	\begin{align*}
	\left|\pr(\anchor(\bsX_{h-\dhinterseq,h+\dhinterseq}/\tepseq)=h\mid \norm{\bsX_0}>\tepseq)-
	\pr(\anchor(\bsX_{h-r,h+r}/\tepseq)=h\mid \norm{\bsX_0}>\tepseq)\right|
	\end{align*}
	is \nonzero\ if and only if $h+r<\anchor(\bsX_{h-\dhinterseq,h+\dhinterseq}/\tepseq)\leq h+\dhinterseq$ or $h-\dhinterseq\leq \anchor(\bsX_{h-\dhinterseq,h+\dhinterseq}/\tepseq)<h-r$. The first property of the anchoring map implies that $\norm{\bsX_j}>\norm{\bsX_0}\wedge \tepseq$ for some $j\in\{h+r+1,\ldots,h+\dhinterseq\}\cup
	\{h-\dhinterseq,\ldots,h-r-1\}$. Again, we can assume that $h<r$. Keeping in mind the conditioning we have:
	\begin{align}\label{eq:approx-2}
	&\lim_{r\to\infty}\lim_{n\to\infty}\left|\pr(\anchor(\bsX_{h-\dhinterseq,h+\dhinterseq}/\tepseq)=h\mid \norm{\bsX_0}>\tepseq)-
	\pr(\anchor(\bsX_{h-r,h+r}/\tepseq)=h\mid \norm{\bsX_0}>\tepseq)\right|\nonumber\\
	&\leq \lim_{r\to\infty}\lim_{n\to\infty}\pr\left(\max\left\{\max_{h+r\leq j\leq h+\dhinterseq}\norm{\bsX_j},
	\max_{h-\dhinterseq\leq j\leq h-r}\norm{\bsX_j}\right\}
	>\tepseq\mid \norm{\bsX_0}>\tepseq\right)=0
	\end{align}
	by \ref{eq:conditiondh}. This finishes the proof of \eqref{eq:step-1}.

	Now, we will prove
	\begin{align}\label{eq:step-2}	&\lim_{n\to\infty}\pr(\anchor(\bsX_{-\dhinterseq,+\dhinterseq}/\tepseq)=0,\widetilde\anchor(\bsX_{h-\dhinterseq,h+\dhinterseq}/\tepseq)=h\mid \norm{\bsX_0}>\tepseq)\nonumber\\
	&=\pr(\anchor(\bsY)=0,\widetilde\anchor(\bsY)=h)\;.
	\end{align}
	In view of \Cref{lem:fidi-convergence-anchoring-twodim}, \eqref{eq:step-2} holds with $\dhinterseq$ replaced with $r$. Now, the idea is to reduce the bivariate case to the univariate.
	
	Note first that for the anchoring maps considered here, the event $A_1:=\{\anchor(\bsY_{h-\dhinterseq,h+\dhinterseq})=h\}$ is included in $A_2:=\{\anchor(\bsY_{h-r,h+r})=h\}$. We also note that for any event $B$ and any pair of ordered events $A_1,A_2$ we have
	$$
	|\pr(A_1\cap B)-\pr(A_2\cap B)|\leq |\pr(A_1)-\pr(A_2)|\;.
	$$
	Thus,
	we can bound
	\begin{align*}
	\left|\pr(\anchor(\bsY_{-r,r})=0,\widetilde\anchor(\bsY_{h-r,h+r})=h)-
	\pr(\anchor(\bsY_{-\dhinterseq,\dhinterseq})=0,\widetilde\anchor(\bsY_{h-\dhinterseq,h+\dhinterseq})=h)\right|
	\end{align*}
	by
	\begin{align*}
	\left|\pr(\anchor(\bsY_{-r,r})=0)-
	\pr(\anchor(\bsY_{-\dhinterseq,\dhinterseq})=0)\right|+
	\left|\pr(\widetilde\anchor(\bsY_{h-r,h+r})=h)-
	\pr(\widetilde\anchor(\bsY_{h-\dhinterseq,h+\dhinterseq})=h)\right|
	\end{align*}
	and we use the first step to conclude that
	\begin{align*}
	\lim_{r\to\infty}\lim_{n\to\infty}\left|\pr(\anchor(\bsY_{-r,r})=0,\widetilde\anchor(\bsY_{h-r,h+r})=h)-
	\pr(\anchor(\bsY_{-\dhinterseq,\dhinterseq})=0,\widetilde\anchor(\bsY_{h-\dhinterseq,h+\dhinterseq})=h)\right|=0\;.
	\end{align*}
	Therefore, \eqref{eq:approx-1} can be extended to the bivariate case. The same argument allows to extend \eqref{eq:approx-2} to the bivariate case. In summary, the proof of \eqref{eq:step-2} is finished.
\end{proof}
In the next lemma, we analyse the conditional convergence for the product of $H^{\anchor}$ and $\widetilde{H}$. Its proof is almost the same as above and hence it is omitted.
\begin{lemma}
	Assume that \ref{eq:conditiondh} holds. Let
	$H,\widetilde{H}\in \mcl$, $\widetilde{H}(\bszero)=0$ and $\anchor$ be an anchoring map.
	Then, for $h,h'\geq 0$,
	\begin{align}\label{eq:covariance-HA-H}
	%&\lim_{n\to\infty}\esp\left[
	%H^{\anchor}\left(\bsX_{-\dhinterseq,\dhinterseq}/\tepseq\right) \widetilde H\left(\bsX_{-h'-\dhinterseq,-h}/\tepseq\right)\mid \norm{\bsX_0}>\tepseq\right]\nonumber\\
	%&=\esp\left[
	%H\left(\bsY\right) \widetilde H\left(\bsY_{-\infty,-h}\right)\ind{\anchor(\bsY)=0}\right]\;,\nonumber\\
	%&\lim_{n\to\infty}\esp\left[
	%H^{\anchor}\left(\bsX_{-\dhinterseq,\dhinterseq}/\tepseq\right) \widetilde H\left(\bsX_{h,h'+\dhinterseq}/\tepseq\right)\mid \norm{\bsX_0}>\tepseq\right]\nonumber\\
	%&=\esp\left[
	%H\left(\bsY\right) \widetilde H\left(\bsY_{h,\infty}\right)\ind{\anchor(\bsY)=0}\right]\nonumber\\
	&\lim_{n\to\infty}\esp\left[
	H^{\anchor}\left(\bsX_{-\dhinterseq,\dhinterseq}/\tepseq\right) \widetilde H\left(\bsX_{h-\dhinterseq,h'+\dhinterseq}/\tepseq\right)\mid \norm{\bsX_0}>\tepseq\right]\nonumber\\
	&=\esp\left[
	H\left(\bsY\right) \widetilde H\left(\bsY\right)\ind{\anchor(\bsY)=0}\right]=\tailmeasurestar(H\widetilde{H})
	\;.
	\end{align}
\end{lemma}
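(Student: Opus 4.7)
The plan is to follow the template of \Cref{lem:cond-conv-H} closely: I would truncate both factors to a fixed window of size $r$ and then let $r\to\infty$. Write $J_n$ for the conditional expectation on the \lhs\ of \eqref{eq:covariance-HA-H} and introduce, for fixed $r \geq \max(h, h')$,
\[
J_{n,r} := \esp\bigl[H^{\anchor}(\bsX_{-r, r}/\tepseq)\, \widetilde H(\bsX_{h-r, h'+r}/\tepseq) \mid \norm{\bsX_0}>\tepseq\bigr].
\]
The goal is first to show $\lim_{r\to\infty}\limsup_{n\to\infty} |J_n - J_{n,r}| = 0$, then to evaluate $\lim_{n\to\infty} J_{n,r}$ for fixed $r$, and finally to let $r\to\infty$.

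For the first step the truncation error splits into two pieces. For the anchored factor, the argument from the proof of \Cref{lem:cond-conv-H} applies almost verbatim: the symmetric difference between $\{\anchor(\bsX_{-r,r}/\tepseq)=0\}$ and $\{\anchor(\bsX_{-\dhinterseq,\dhinterseq}/\tepseq)=0\}$ is contained in the event $\{\max_{r<|j|\leq\dhinterseq}\norm{\bsX_j}>\tepseq\}$, whose conditional probability vanishes as $r,n\to\infty$ by \ref{eq:conditiondh}. For the unanchored factor, since $\widetilde H$ is bounded, is either Lipschitz \wrt\ the uniform norm or a.s.\ continuous \wrt\ the law of $\bsY$, and satisfies $\widetilde H(\bszero)=0$, the same bounding strategy as in \eqref{eq:covariance-vanishes-HA-H} gives
\[
\lim_{r\to\infty}\limsup_{n\to\infty} \esp\bigl[\,|\widetilde H(\bsX_{h-\dhinterseq,h'+\dhinterseq}/\tepseq) - \widetilde H(\bsX_{h-r,h'+r}/\tepseq)|\,\mid \norm{\bsX_0}>\tepseq\bigr]= 0,
\]
since the two arguments coincide on the window $\intint{h-r}{h'+r}$, while \ref{eq:conditiondh} forces the outer coordinates to concentrate near $\bszero$ in the appropriate conditional limit.

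Once truncation is justified, \Cref{lem:tailprocesstozero} applied to the fixed-window functional $\bsx \mapsto H^{\anchor}(\bsx_{-r,r})\,\widetilde H(\bsx_{h-r,h'+r})$, which lies in $\mcl$ for each of the three anchoring maps considered (its discontinuity set \wrt\ the product topology has measure zero under the law of $\bsY$), yields $\lim_{n\to\infty} J_{n,r} = \esp[H^{\anchor}(\bsY_{-r,r})\,\widetilde H(\bsY_{h-r,h'+r})]$. Using $\bsY\in\lzero(\Rset^d)$ a.s.\ together with $\norm{\bsY_0}>1$ a.s., dominated convergence gives
\[
\lim_{r\to\infty}\esp[H^{\anchor}(\bsY_{-r,r})\,\widetilde H(\bsY_{h-r,h'+r})] = \esp[H(\bsY)\widetilde H(\bsY)\ind{\anchor(\bsY)=0}],
\]
which equals $\tailmeasurestar(H\widetilde H)$ by \eqref{eq:HA-on-Y} applied to $H\widetilde H$. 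The main obstacle is the treatment of $\widetilde H$ in the truncation step: unlike the functionals in $\mca$, the function $\widetilde H$ need not have support separated from $\bszero$, so the reduction to a fixed window cannot be obtained by inspecting supports and must instead exploit $\widetilde H(\bszero)=0$ together with the Lipschitz / a.s.-continuity structure and \ref{eq:conditiondh}.
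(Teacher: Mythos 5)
Your proposal is correct and follows essentially the route the paper intends: the paper omits this proof, stating it is ``almost the same'' as that of \Cref{lem:cond-conv-H}, and your truncation-to-a-fixed-window argument, with the symmetric-difference bound for the anchoring indicator controlled by \ref{eq:conditiondh} and the unanchored factor handled via $\widetilde H(\bszero)=0$ exactly as in \eqref{eq:covariance-vanishes-HA-H}, is precisely that adaptation. The final identification with $\tailmeasurestar(H\widetilde H)$ via \eqref{eq:HA-on-Y} (using $\norm{\bsY_0}>1$ a.s.) also matches the paper's reasoning in the remarks following \Cref{lem:cond-conv-H}.
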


\subsection{Limiting Covariances}\label{sec:covariances}
The goal of this section is to prove \Cref{lem:covariance-runs,lem:covariance-runs-blocks}.
Two situations will arise when dealing with the covariances:
\begin{itemize}
	\item Situation 1: we will deal with $\sum_{h=-\dhinterseq}^{\dhinterseq}\esp[c_{h,n}(\bsX/\tepseq)]$, where
	\begin{align*} \lim_{n\to\infty}\esp[c_{h,n}(\bsX/\tepseq)]=\esp[c_h(\bsY)]\;, \ \ \sum_{h\in\Zset}\esp[|c_h(\bsY)|]<\infty\;.
	\end{align*}
	We will fix an integer $r>0$; the convergence of $\sum_{h=-r}^r \esp[c_{h,n}(\bsX/\tepseq)]$ to $\sum_{h=-r}^r\esp[c_h(\bsY)]$ will follow. The reminder $\sum_{|h|>r}\esp[c_h(\bsY)]$ is negligible (as $r\to\infty$) by the summability assumption, while $\sum_{h>|\dhinterseq|}\esp[c_{h,n}(\bsX/\tepseq)]$ will be treated by the anticlustering condition \ref{eq:conditionS}.
	
	\item Situation 2: we will deal with $\dhinterseq^{-1}\sum_{h=1}^{\dhinterseq}\esp[c_{h,n}(\bsX/\tepseq)]=\int_0^1 g_n(\xi)\rmd\xi$, where
	$g_n(h)=\esp[c_{h,n}(\bsX/\tepseq)]$ and $g_n(\xi)\to g(\xi)$ as $n\to\infty$. Bounded convergence argument will be applied.
\end{itemize}

\begin{proof}[Proof of \Cref{lem:covariance-runs}]
	Recall that
	\begin{align*}
	H_{n,j}^{\anchor}&=\sum_{i=j\dhinterseq+1}^{(j+1)\dhinterseq}
	H^{\anchor}\left(\bsX_{i-\dhinterseq,i+\dhinterseq}/\tepseq\right)\;.
	\end{align*}
	The covariance of the scaled statistics is
	\begin{align}\label{eq:covariance-calculation}
	&n\pr(\norm{\bsX_0}>\tepseq)\cov\left(\tedclusterr(H^{\anchor}),\tedclusterr(\widetilde{H}^{\widetilde\anchor})\right)=
	\frac{1}{\dhinterseq \pr(\norm{\bsX_0}>\tepseq)}
	\cov\left(H_{n,0}^\anchor,\widetilde{H}_{n,0}^{\widetilde\anchor}\right)\nonumber\\
	&\phantom{=}+\frac{1}{\dhinterseq \pr(\norm{\bsX_0}>\tepseq)}   \sum_{j=1}^{m_n-1}\left(1-\frac{j}{m_n}\right)\left\{\cov(H_{n,0}^{\anchor},\widetilde{H}_{n,j}^{\widetilde\anchor})
	+\cov(\widetilde{H}_{n,0}^{\widetilde{\anchor}},H_{n,j}^{\anchor})\right\}
	\;.
	\end{align}

	With the help of  \ref{eq:rnbarFun0}, we will show that
	$\cov\left(H_{n,0}^\anchor,\widetilde{H}_{n,0}^{\widetilde\anchor}\right)$ is determined by that of
	\begin{align}\label{eq:asymptotics-1}
	&\lim_{n\to\infty}
	\frac{1}{\dhinterseq\pr(\norm{\bsX_0}>\tepseq)}
	\cov\left(H_{n,0}^\anchor,\widetilde{H}_{n,0}^{\widetilde\anchor}\right)\nonumber\\
	&=
	\lim_{n\to\infty}\frac{1}{\pr(\norm{\bsX_0}>\tepseq)}\sum_{h=-\dhinterseq}^{\dhinterseq}
	\left(1-\frac{|h|}{\dhinterseq}\right)\esp[
	H^{\anchor}\left(\bsX_{0,2\dhinterseq}/\tepseq\right)\widetilde{H}^{\widetilde\anchor}\left(\bsX_{h,h+2\dhinterseq}/\tepseq\right)]
	\nonumber \\
	&=\tailmeasurestar(H\widetilde{H})\;.
	\end{align}
	We are in the Situation 1.
	For fixed $r$, using \eqref{eq:limiting-covariance-fixed-h} and \eqref{eq:covariance-zero} we have
	\begin{align}\label{eq:covariance-reduction}
	&\lim_{n\to\infty}\frac{1}{\pr(\norm{\bsX_0}>\tepseq)}\sum_{h=-r}^{r}\esp[
	H^{\anchor}\left(\bsX_{-\dhinterseq,\dhinterseq}/\tepseq\right)
	\widetilde{H}^{\widetilde\anchor}\left(\bsX_{h-\dhinterseq,h+\dhinterseq}/\tepseq\right)]\nonumber\\
	&=\sum_{h=-r}^{r}\esp[H(\bsY)\widetilde{H}(\bsY)\ind{\anchor(\bsY)=0}\ind{\widetilde\anchor(\bsY)=h}]=\sum_{h=-r}^{r}\mci(H,\widetilde{H},\anchor,\widetilde{\anchor},h)=
	\nonumber\\
	&=\mci(H,\widetilde{H},\anchor,\widetilde{\anchor},0)=\esp[H(\bsY)\widetilde{H}(\bsY)\ind{\anchor(\bsY)=0}]=\tailmeasurestar(H\widetilde{H})\;.
	\end{align}
	The value above does not depend on $r$. Moreover,
	\begin{align}\label{eq:consequence-of-condition-S}
	&\frac{1}{\pr(\norm{\bsX_0}>\tepseq)}\sum_{r<|h|\leq \dhinterseq}
	\esp[
	H^{\anchor}\left(\bsX_{-\dhinterseq,\dhinterseq}/\tepseq\right)\widetilde{H}^{\widetilde\anchor}\left(\bsX_{-\dhinterseq+h,\dhinterseq+h}/\tepseq\right)]\nonumber\\
	&\leq \|H\| \|\widetilde{H}\| \frac{1}{\pr(\norm{\bsX_0}>\tepseq)}\sum_{r<|h|\leq \dhinterseq}\pr(\norm{\bsX_0}>\tepseq,\norm{\bsX_h}>\tepseq)\;.
	\end{align}
	Letting $n\to\infty$ and then $r\to\infty$, we finish
	the proof of \eqref{eq:asymptotics-1} by applying \ref{eq:conditionS}.
	
	Now, we deal with the term in \eqref{eq:covariance-calculation}. For $j\geq 1$,
	\begin{align}\label{eq:covariance-term}
	&\frac{1}{\dhinterseq\pr(\norm{\bsX_0}>\tepseq)}
	\left|\cov(H_{n,0}^\anchor,\widetilde{H}_{n,j}^{\widetilde{\anchor}})\right|\nonumber\\
	&=
	\frac{1}{\dhinterseq\pr(\norm{\bsX_0}>\tepseq)}\left|\cov\left(\sum_{h=1}^{\dhinterseq}H^{\anchor} \left(\backshift^{-h}\bsX_{-\dhinterseq,\dhinterseq}/\tepseq\right),
	\sum_{i=1}^{\dhinterseq}\widetilde{H}^{\widetilde\anchor} \left(\backshift^{-i}\bsX_{(j-1)\dhinterseq,(j+1)\dhinterseq}/\tepseq\right)
	\right)\right|\nonumber\\
	&\leq\sum_{h=(j-1)\dhinterseq+1}^{j\dhinterseq}\left(\frac{h}{\dhinterseq}-(j-1)\right)|{g}_n(h)| +
	\sum_{h=j\dhinterseq+1}^{(j+1)\dhinterseq}\left((j+1)-\frac{h}{\dhinterseq}\right)|{g}_n(h)|\nonumber \\
	&
	\leq \sum_{h=(j-1)\dhinterseq+1}^{(j+1)\dhinterseq}|{g}_n(h)|=:I_{j}
	\end{align}
	with
	\begin{align*}
	{g}_n(h)&=\frac{1}{\pr(\norm{\bsX_0}>\tepseq)}\cov(H^{\anchor}(\bsX_{-\dhinterseq,\dhinterseq}/\tepseq),
	\widetilde{H}^{\widetilde\anchor}(\bsX_{h-\dhinterseq,h+\dhinterseq}/\tepseq))\;.
	\end{align*}
	For $h>2\dhinterseq$ we have by \eqref{eq:davydov-2},
	\begin{align}\label{eq:covariance-term-1}
	|{g}_n(h)|&\leq \frac{\|H\|_{\infty}\|\widetilde{H}\|_{\infty}}{\pr(\norm{\bsX_0}>\tepseq)} \beta_{h-2\dhinterseq}\;.
	\end{align}
	Thus,
	\begin{align*}
	&\frac{1}{\dhinterseq \pr(\norm{\bsX_0}>\tepseq)}   \sum_{j=4}^{m_n-1}\left|\cov(H_{n,0}^{\anchor},\widetilde{H}_{n,j}^{\widetilde\anchor})\right|\leq
	\frac{\|H\|_{\infty}\|\widetilde{H}\|_{\infty}}{\pr(\norm{\bsX_0}>\tepseq)}  \sum_{j=4}^{m_n-1}
	\sum_{h=(j-1)\dhinterseq+1}^{(j+1)\dhinterseq}\beta_{h-2\dhinterseq}\\
	&\leq 2\frac{\|H\|_{\infty}\|\widetilde{H}\|_{\infty}}{\pr(\norm{\bsX_0}>\tepseq)}  \sum_{h=3\dhinterseq+1}^{\infty}\beta_{h-2\dhinterseq}=O(1)\frac{1}{\pr(\norm{\bsX_0}>\tepseq)}
	\sum_{i=\dhinterseq+1}^{\infty}\beta_{i}=o(1)
	\end{align*}
	by the assumption \eqref{eq:mixing-rates-runs}.
	
	The terms that correspond to $j=1,2,3$ in \eqref{eq:covariance-calculation} have to be dealt with separately. We are again in the Situation 1. We have
	\begin{align*}
	I_1+I_2+I_3\leq 2\sum_{h=1}^{4\dhinterseq}|g_n(h)|=2\left\{ \sum_{h=1}^r+\sum_{i=r+1}^{4\dhinterseq}\right\} |g_n(h)|\;.\;
	\end{align*}
	Both parts are negligible. Indeed, as in \eqref{eq:covariance-reduction},
	\begin{align*}
	&\lim_{n\to\infty}\sum_{h=1}^r|g_n(h)|\leq \lim_{n\to\infty}\frac{1}{\pr(\norm{\bsX_0}>\tepseq)}\sum_{h=1}^{r}\esp[
	H^{\anchor}\left(\bsX_{-\dhinterseq,\dhinterseq}/\tepseq\right)
	\widetilde{H}^{\widetilde\anchor}\left(\bsX_{h-\dhinterseq,h+\dhinterseq}/\tepseq\right)]\nonumber\\
	&=\sum_{h=1}^{r}\esp[H(\bsY)\widetilde{H}(\bsY)\ind{\anchor(\bsY)=0}\ind{\widetilde\anchor(\bsY)=h}]=
	\sum_{h=1}^r\mci(H,\widetilde{H},\anchor,\widetilde{\anchor},h)
	\end{align*}
	and by \eqref{eq:covariance-zero} the last term vanishes.
	
	For the term $\sum_{h=r+1}^{4\dhinterseq}$ we apply \ref{eq:conditionS}; see the argument used in \eqref{eq:consequence-of-condition-S}.
	
	This finishes the proof of the lemma.
\end{proof}
\begin{proof}[Proof of \Cref{lem:covariance-runs-blocks}]
	Recall that
	\begin{align*}
	\widetilde{H}_j=
	H\left(\bsX_{j\dhinterseq+1,(j+1)\dhinterseq}/\tepseq\right)\;.
	\end{align*}
	Here, $\widetilde{H}_j$ is a function of the $j$th block $\bsX_{j\dhinterseq+1,(j+1)\dhinterseq}$, $j=0,\ldots,m_n-1$. Since $H_{n,j}^\anchor$, $j=0,\ldots,m_n-1$, is a function of the block $\bsX_{(j-1)\dhinterseq+1,\ldots,(j+2)\dhinterseq}$ (recall that we assumed that
	we have data $\bsX_{1-\dhinterseq},\ldots,\bsX_{n+\dhinterseq}$), for $|q|\geq 3$,
	\begin{align}\label{eq:mixing-bound}
	\cov(H_{n,j}^\anchor,\widetilde{H}_{j+q})\leq \|H\|\|\widetilde{H}\|\beta_{(|q|-2)\dhinterseq}\;;
	\end{align}
	cf. \eqref{eq:davydov-2}.
	We have
	\begin{align}\label{eq:covariance-calculation-disblocks}
	&\statinterseqn\cov\left(\tedclusterr(H^{\anchor}),\tedcluster(\widetilde H)\right)=
	\frac{1}{\dhinterseq \pr(\norm{\bsX_0}>\tepseq)}
	\cov\left(H_{n,0}^\anchor,\widetilde H_0\right)\nonumber\\
	&\phantom{=}+\frac{1}{\dhinterseq \pr(\norm{\bsX_0}>\tepseq)}   \sum_{j=1}^{m_n-1}\left(1-\frac{j}{m_n}\right)\left\{\cov(H_{n,0}^{\anchor},\widetilde H_{j})
	+\cov(\widetilde H_0,H_{n,j}^{\anchor})\right\}
	\;.
	\end{align}
	We analyse $\cov\left(H_{n,0}^\anchor,\widetilde H_0\right)$.
	%Here the conditioning $\{\norm{\bsX_i}>\tepseq\}$ in $H_{n,0}^\anchor$ occurs in the block
	%$J_0=\{1,\ldots, \dhinterseq\}$. At the same time, $\widetilde H_0$ is a function of $J_0$.
	
	We are in the Situation 2:
	\begin{align*}
	&\frac{1}{\dhinterseq \pr(\norm{\bsX_0}>\tepseq)}
	\esp\left[H_{n,0}^\anchor\widetilde H_0\right]
	\\
	&=\frac{1}{\dhinterseq \pr(\norm{\bsX_0}>\tepseq)}
	\sum_{i=1}^{\dhinterseq}\esp\left[
	H^{\anchor}\left(\bsX_{i-\dhinterseq,i+\dhinterseq}/\tepseq\right) \widetilde H\left(\bsX_{1,\dhinterseq}/\tepseq\right)\right]\\
	&=\frac{1}{\dhinterseq \pr(\norm{\bsX_0}>\tepseq)}
	\sum_{i=1}^{\dhinterseq}\esp\left[
	H\left(\bsX_{i-\dhinterseq,i+\dhinterseq}/\tepseq\right)
	\ind{\anchor\left(\bsX_{i-\dhinterseq,i+\dhinterseq}\right)=0}
	\ind{\norm{\bsX_i}>\tepseq} \widetilde H\left(\bsX_{1,\dhinterseq}/\tepseq\right)\right]\\
	&=\frac{1}{\dhinterseq \pr(\norm{\bsX_0}>\tepseq)}
	\sum_{i=1}^{\dhinterseq}\esp\left[
	H\left(\bsX_{-\dhinterseq,\dhinterseq}/\tepseq\right)\ind{\anchor\left(\bsX_{-\dhinterseq,\dhinterseq}\right)=0}\ind{\norm{\bsX_0}>\tepseq} \widetilde H\left(\bsX_{1-i,\dhinterseq-i}/\tepseq\right)\right]\\
	&=\frac{1}{\dhinterseq}
	\sum_{i=1}^{\dhinterseq}\esp\left[
	H^{\anchor}\left(\bsX_{-\dhinterseq,\dhinterseq}/\tepseq\right) \widetilde H\left(\bsX_{1-i,\dhinterseq-i}/\tepseq\right)\mid \norm{\bsX_0}>\tepseq\right]=\int_0^1h_{n,0}(\xi)\rmd\xi
	\end{align*}
	with
	\begin{align*}
	h_{n,0}(\xi)=\esp\left[
	H^{\anchor}\left(\bsX_{-\dhinterseq,\dhinterseq}/\tepseq\right) \widetilde H\left(\bsX_{1-[\xi\dhinterseq],\dhinterseq-[\xi\dhinterseq]}/\tepseq\right)\mid \norm{\bsX_0}>\tepseq\right]\;, \ \ \xi\in (0,1)\;.
	\end{align*}
	Note that the third equality follows by stationarity.
	By \eqref{eq:covariance-HA-H}, for each $\xi\in (0,1)$, $h_{n,0}(\xi)\to \tailmeasurestar(H\widetilde{H})$.
	Furthermore, the sequence $\{h_{n,0},n\geq 1\}$ is uniformly bounded in $n$ and $\xi$.
	Thus, with help of \ref{eq:rnbarFun0},
	\begin{align*}
	\lim_{n\to\infty}\frac{1}{\dhinterseq \pr(\norm{\bsX_0}>\tepseq)}\cov(H_{n,0}^\anchor,\widetilde H_0)=
	\lim_{n\to\infty}\frac{1}{\dhinterseq \pr(\norm{\bsX_0}>\tepseq)}\esp[H_{n,0}^\anchor\widetilde H_0]=
	\tailmeasurestar(H\widetilde{H})\;.
	\end{align*}
	The other covariances vanish. Indeed,
	we analyse $\cov(H_{n,0}^\anchor,\widetilde H_{j})$, $j\geq 1$.
	%Here the conditioning $\{\norm{\bsX_i}>\tepseq\}$ in $H_{n,j}^\anchor$ occurs in the block
	%$J_j=\{j\dhinterseq+1,\ldots, (j+1)\dhinterseq\}$ and hence $\bsX_i$ does not overlap with %$\widetilde H_{j-q}$ which is a function of $\bsX_{(j-q)\dhinterseq+1,(j-q+1)\dhinterseq}$.
	We have, using again the stationarity as above,
	\begin{align*}
	&\frac{1}{\dhinterseq \pr(\norm{\bsX_0}>\tepseq)}\esp[H_{n,0}^\anchor,\widetilde H_{j}]\\
	&=
	\frac{1}{\dhinterseq \pr(\norm{\bsX_0}>\tepseq)}
	\sum_{i=1}^{\dhinterseq}\esp\left[
	H^{\anchor}\left(\bsX_{i-\dhinterseq,i+\dhinterseq}/\tepseq\right) \widetilde H\left(\bsX_{j\dhinterseq+1,(j+1)\dhinterseq}/\tepseq\right)\right]\\
	&=\frac{1}{\dhinterseq \pr(\norm{\bsX_0}>\tepseq)}
	\sum_{i=1}^{\dhinterseq}\esp\left[
	H^{\anchor}\left(\bsX_{-\dhinterseq,\dhinterseq}/\tepseq\right) \widetilde H\left(\bsX_{j\dhinterseq+1-i,(j+1)\dhinterseq-i}/\tepseq\right)\right]\\
	&=\int_{0}^{1}h_{n,j}(\xi)\rmd\xi
	\end{align*}
	with a function $h_{n,j}$ defined on $(0,1)$ by
	\begin{align*}
	h_{n,j}(\xi)=\esp\left[
	H^{\anchor}\left(\bsX_{-\dhinterseq,\dhinterseq}/\tepseq\right) \widetilde H\left(\bsX_{j\dhinterseq-[\xi\dhinterseq]+1,(j+1)\dhinterseq-[\xi\dhinterseq]}/\tepseq\right)\mid \norm{\bsX_0}>\tepseq\right]\;.
	\end{align*}
	Until now we proceeded as in the case $j=0$ above. However, now we use
	\eqref{eq:covariance-vanishes-HA-H}. For each $\xi\in (0,1)$, $j\dhinterseq-[\xi\dhinterseq]\to +\infty$. Hence, $h_{n,j}(\xi)\to 0$. Bounded convergence and \ref{eq:rnbarFun0} give
	\begin{align}\label{eq:H-HA-1}
	\lim_{n\to\infty}\frac{1}{\dhinterseq \pr(\norm{\bsX_0}>\tepseq)}\cov(H_{n,0}^\anchor,\widetilde H_{j})=0\;.
	\end{align}
	The same idea applies to $\cov(\widetilde H_{0}, H_{n,j}^\anchor)$, $j\geq 1$:
	\begin{align}\label{eq:H-HA-2}
	\lim_{n\to\infty}\frac{1}{\dhinterseq \pr(\norm{\bsX_0}>\tepseq)}\cov(\widetilde H_{0}, H_{n,j}^\anchor)=0\;.
	\end{align}
	Now, by \eqref{eq:H-HA-1}-\eqref{eq:H-HA-2}, the terms that correspond to $j=1,2$ in  \eqref{eq:covariance-calculation-disblocks} vanish, while \eqref{eq:mixing-bound} and \eqref{eq:mixing-rates-7} give
	\begin{align*}
	&\frac{1}{\dhinterseq \pr(\norm{\bsX_0}>\tepseq)}\sum_{j=3}^{m_n-1}
	\left\{\cov(H_{n,0}^{\anchor},\widetilde H_{j})
	+\cov(\widetilde H_0,H_{n,j}^{\anchor})\right\}\\
	&=O(1)\frac{1}{\dhinterseq \pr(\norm{\bsX_0}>\tepseq)}\sum_{j=1}^\infty \beta_{j\dhinterseq}=o(1)\;.
	\end{align*}
\end{proof}
%\begin{remark}\label{rem:A-LinA-delta}
%	Let $\delta>0$. If $H$ is bounded then
%	\begin{align*}
%	%\label{eq:uniform-bound}
%	\tailmeasurestar_{n,\dhinterseq}(|H|^{1+\delta})\leq \|H\|_\infty^{\delta}
%	\tailmeasurestar_{n,\dhinterseq}(|H|)
%	\end{align*}
%	and by the assumptions on the classes $\widetilde\mca$ and $\widetilde\mcb$,
%	$$
%	\lim_{n\to\infty}\sup_{H\in\widetilde\mca\cup\widetilde\mcb}\tailmeasurestar_{n,\dhinterseq}(|H|^{1+\delta})<\infty\;.
%	$$
%	\remarkend
%\end{remark}
\subsection{Empirical cluster process of runs statistics}\label{sec:fclt}
Recall that
\begin{align*}
H^{\anchor}(\bsx)=H(\bsx)\ind{\anchor(\bsx)=0}\ind{\norm{\bsx_0}>1}\;.
\end{align*}
Define
\begin{align}\label{eq:HC with s}
H_s^{\anchor}(\bsx)=H(\bsx/s)\ind{\anchor(\bsx/s)=0}\ind{\norm{\bsx_0}>s}\;.
\end{align}
Recall that $0<s_0<1<t_0<\infty$. Recall also that $\statinterseqn=n\pr(\norm{\bsX_0}>\tepseq)$.
Define also the classical tail empirical process by
\begin{align*}
\mathbb{T}_n(s)=\sqrt{\statinterseqn}\left\{\frac{\sum_{j=1}^n\ind{\norm{\bsX_j}> s\tepseq}}{\statinterseqn}-s^{-\alpha}\right\}\;, \ \ s\in [s_0,t_0]\;.
\end{align*}

In order to deal with asymptotic normality of runs estimators, we study the empirical process
\begin{align*}
%\label{eq:Gn-representation}
\mathbb{F}_n(H_s^{\anchor})&:=\sqrt{\statinterseqn}
\left\{
\tedclusterr(H_s^{\anchor})-\tailmeasurestar(H_s)\right\}\\
&
=\sqrt{\statinterseqn}
\left\{
\frac{\sum_{i=1}^{n}H_s^{\anchor}\left(\bsX_{i-\dhinterseq,i+\dhinterseq}/\tepseq\right)}{ \statinterseqn}-s^{-\alpha}\tailmeasurestar(H)\right\}\;.
\end{align*}
The process $\mathbb{F}_n(H_s^{\anchor})$ is viewed as a random element with values in
$\spaceD([s_0,t_0])$. The next result is crucial to establish convergence of runs estimators.
\begin{theorem}
	\label{thm:sliding-blocks-process-clt}
	Let $\sequence{\bsX}$ be a stationary, regularly varying $\Rset^d$-valued time series.
	Assume
	that~\ref{eq:rnbarFun0}, $\beta'(\dhinterseq)$, \ref{eq:conditionS},  \eqref{eq:further-restriction-on-rn} and \eqref{eq:bias-cluster-clt-func}
	hold. Suppose that \Cref{hypo:entropy} is satisfied. \\
	
	Then ${\mathbb{F}}_n(H^{\anchor}_{\cdot})$ converges weakly in $(\spaceD([s_0,t_0]),J_1)$ to a Gaussian process ${\mathbb{G}}(H_{\cdot})$ with the covariance
	$\tailmeasurestar(H_sH_t)$.
	If moreover \ref{eq:AN-small} is satisfied, then the convergence holds for $H\in\mcb$.
	If additionally \eqref{eq:bias-cluster-clt-aa} is satisfied, then
	the processes ${\mathbb{F}}_n(H^{\anchor}_{\cdot})$ and ${\mathbb{T}}_n(\cdot)$
	converge jointly $({\mathbb{G}}(H_{\cdot}),{\mathbb{G}}(\exc_{\cdot}))$.
\end{theorem}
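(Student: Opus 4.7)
The plan is to establish weak convergence in $(\spaceD([s_0,t_0]),J_1)$ by combining finite-dimensional convergence with asymptotic tightness of $\mathbb{F}_n(H^\anchor_\cdot)$. First, thanks to \eqref{eq:bias-cluster-clt-func}, the deterministic centering $\tailmeasurestar(H_s)$ may be replaced, uniformly in $s\in[s_0,t_0]$, by $\esp[\tedclusterr(H_s^\anchor)]$ at a cost that is $o(1)$. The task then reduces to a functional CLT for the centered process $\sqrt{\statinterseqn}\bigl(\tedclusterr(H_s^\anchor)-\esp[\tedclusterr(H_s^\anchor)]\bigr)$.

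For finite-dimensional convergence I would decompose $\sum_{i=1}^n H_s^\anchor(\bsX_{i-\dhinterseq,i+\dhinterseq}/\tepseq)$ into the big blocks $H_{n,j}^\anchor$, $j=0,\dots,m_n-1$, defined in \eqref{eq:Hnj}, each depending on only $3\dhinterseq$ consecutive observations. Blocks with $|j-j'|\geq 3$ are $\beta_{(|j-j'|-2)\dhinterseq}$-separated, so under \eqref{eq:mixing-rates-0} a Berbee/Bradley coupling replaces $\{H_{n,j}^\anchor\}$ by independent copies, with a total-variation cost that becomes $o(1)$ after scaling by $1/\sqrt{\statinterseqn}$. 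The variance of the coupled sum is precisely the quantity computed in \Cref{lem:covariance-runs}, so the covariance at two levels $(s,t)$ converges to $\tailmeasurestar(H_sH_t)$ and Cramér--Wold delivers the joint fidi limit. The Lindeberg condition is elementary: the crude bound $|H_{n,j}^\anchor|\leq \|H\|_\infty\dhinterseq$ combined with \eqref{eq:further-restriction-on-rn} gives $\|H_{n,j}^\anchor\|_\infty/\sqrt{\statinterseqn}\to 0$, so the truncated second moment is trivially negligible.

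Tightness is obtained from \Cref{hypo:entropy} by a chaining argument on the random pseudo-metric $d_n$: one proves a maximal inequality over the class $\mcg=\{H^\anchor_s:s\in[s_0,t_0]\}$ by coupling big blocks to independent copies on each bracket and then invoking Pollard-type entropy chaining, with the integral condition \eqref{eq:randomentropy} controlling the chaining tail. Crucially, among the three admissible maps, $\anchor_s^{(1)}$ and $\anchor_s^{(2)}$ are monotone in $s$ while $\anchor^{(0)}$ is $0$-homogeneous, so $\ind{\anchor(\bsx/s)=0}$ and $\ind{\norm{\bsx_0}>s}$ can be sandwiched between monotone functions of $s$; standard bracketing for monotone families then feeds back into the entropy bound. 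The extension from $H\in\mca$ to $H\in\mcb$ under \ref{eq:AN-small} follows by approximating $\ind{K>1}$ with Lipschitz functionals vanishing in a neighbourhood of $\bszero$, exactly as in the proof of \Cref{theo:limit-lone-Q}. Joint convergence with $\mathbb{T}_n$ under \eqref{eq:bias-cluster-clt-aa} is obtained by applying the same machinery to the augmented class $\mcg\cup\{\exc_t:t\in[s_0,t_0]\}$: the cross-covariance is given by \Cref{lem:covariance-runs-blocks}, yielding the limit $\tailmeasurestar(H_s\exc_t)$, which is the covariance of $(\mathbb{G}(H_\cdot),\mathbb{G}(\exc_\cdot))$.

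The main obstacle is the discontinuity of $s\mapsto\ind{\anchor(\bsx/s)=0}$, which is not Lipschitz and therefore blocks the usual bounded-Lipschitz weak-convergence framework; overcoming it relies essentially on the monotonicity/homogeneity properties of the three specific anchoring maps flagged in \Cref{sec:anchoring}, which is why the theorem is stated only for these three choices. A secondary subtlety is that a single block $H_{n,j}^\anchor$ is itself a sum of $\dhinterseq$ strongly dependent terms, and that it nonetheless produces the correct limiting variance $\tailmeasurestar(H_sH_t)$ -- matching both the disjoint- and sliding-blocks variances -- is the substantive content of \Cref{lem:covariance-runs} together with the anticlustering condition \ref{eq:conditionS}. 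Without that identity one would not obtain the total variance coincidence that makes the runs estimator comparable, at the first-order level, with the other two block schemes.
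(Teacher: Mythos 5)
Your overall architecture (fidi convergence plus asymptotic equicontinuity via a van der Vaart--Wellner type chaining bound under \Cref{hypo:entropy}, with the covariances supplied by \Cref{lem:covariance-runs,lem:covariance-runs-blocks}) matches the paper's. But there is a genuine gap in the fidi step: you propose to couple the family $\{H_{n,j}^{\anchor}\}_{j=0}^{m_n-1}$ directly to independent copies. These blocks cannot be decoupled, because $H_{n,j}^{\anchor}$ is a function of $\bsX_{(j-1)\dhinterseq+1},\ldots,\bsX_{(j+2)\dhinterseq}$, so adjacent blocks share $2\dhinterseq$ of the underlying observations and blocks with $|j-j'|\leq 2$ have no mixing gap at all --- you note this yourself when you write that only $|j-j'|\geq 3$ gives separation $\beta_{(|j-j'|-2)\dhinterseq}$, but then conclude that \emph{all} the blocks can be replaced by independent copies at $o(1)$ total-variation cost, which does not follow; the Berbee coupling cost is controlled only by the mixing coefficient at the gap length, and for overlapping or adjacent blocks that gap is zero. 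The paper resolves this with a Bernstein large/small-block scheme: it merges $z_n$ consecutive blocks $J_0,\ldots,J_{z_n-1}$ into a large block $L_j$ and inserts small blocks $S_j$ of length $3\dhinterseq$ as separators, so that the data underlying consecutive large blocks are separated by at least $\dhinterseq$ and \eqref{eq:mixing-rates-0} makes the coupling cost $\widetilde m_n\beta_{\dhinterseq}=o(1)$; the small-block contribution is then shown to have variance $O(1/z_n)$ and the boundary term is negligible. This also changes your Lindeberg argument: for summands of size $z_n\dhinterseq$ the crude bound requires $z_n\dhinterseq/\sqrt{\statinterseqn}\to 0$, which is why the paper introduces the auxiliary condition \eqref{eq:zn} (achievable under \ref{eq:rnbarFun0} and \eqref{eq:further-restriction-on-rn}) rather than relying on \eqref{eq:further-restriction-on-rn} alone.

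A secondary point: your tightness sketch is in the right spirit (the monotonicity of $s\mapsto\anchor^{(1)}_s,\anchor^{(2)}_s$ and the $0$-homogeneity of $\anchor^{(0)}$ are indeed the levers, and this is why the theorem is restricted to these three maps), but the chaining lemma is applied to the \emph{independent large-block} process, so the equicontinuity argument inherits the same blocking structure that your fidi step is missing. Moreover, the substantive work there is not the entropy bound (which is assumed in \Cref{hypo:entropy}) but the verification that $\sup_{|s-t|\leq\delta_n}\esp[d_n^2(H_s^{\anchor},H_t^{\anchor})]\to 0$, i.e.\ condition \eqref{eq:continuity-l2}; this requires the careful sign-controlled decomposition of $H_s^{\anchor}-H_t^{\anchor}$ into monotone pieces ($T_1,T_2$ in the homogeneous case, $T_1,T_2,T_3$ for $\anchor^{(1)},\anchor^{(2)}$) together with \ref{eq:conditionS}, and your proposal does not engage with it.
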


\subsection{Proof of \Cref{thm:sliding-block-clt-1}}\label{sec:clt}
Write $\psi_n=\orderstat[\norm{\bsX}]{n}{n-\statinterseqn}/u_n$. Since $\statinterseqn=n\pr(\norm{\bsX_0}>\tepseq)$, we can rewrite $\TEDclusterrandomr_{n,\dhinterseq}(H^{\anchor})$ as
$\TEDclusterrandomr_{n,\dhinterseq}(H^{\anchor})= \tedclusterr (H^{\anchor}_{\psi_n})$ (cf. \eqref{eq:process-xi}-\eqref{eq:process-xi-hat}). Therefore,

\begin{align*}
\sqrt{\statinterseqn}\left\{\TEDclusterrandomr_{n,\dhinterseq}(H^{\anchor})-\tailmeasurestar(H)\right\}={\mathbb{F}}_n(H^{\anchor}_{\psi_n}) +\sqrt{\statinterseqn}\left\{\tailmeasurestar(H_{\psi_n})-\tailmeasurestar(H)\right\}\;.
\end{align*}
We have local uniform convergence of  $\{\mathbb{F}_n(H^{\anchor}_{s}),\;s\in [s_0,\,t_0]\}$ to a continuous Gaussian  process
$\tepcluster$ thanks to Theorem \ref{thm:sliding-blocks-process-clt}. Moreover, the convergence of $\{{\mathbb{T}}_n(\cdot),\;s\in [s_0,\,t_0]\}$ yields $\psi_n\convdistr 1$, jointly with $\mathbb{F}_n(H^{\anchor}_{s})$. Therefore, $\mathbb{F}_n(H^{\anchor}_{\psi_n})\convdistr
\tepcluster(H) $. Using Vervaat's theorem, we have, jointly with the previous convergence, $\sqrt{\statinterseqn}(\psi_n^{-\alpha}-1) \convdistr - \tepcluster(\exc)$. Therefore, by the homogeneity of $\tailmeasurestar$,
\begin{align*}
\sqrt{\statinterseqn}\left\{\tailmeasurestar(H_{\psi_n})-\tailmeasurestar(H)\right\} =\tailmeasurestar(H)\sqrt{\statinterseqn}(\psi_n^{-\alpha}-1)  \convdistr -\tailmeasurestar(H)\tepcluster(\exc).
\end{align*}
Since the convergence hold jointly, we conclude the result.

\subsection{Proof of \Cref{thm:sliding-blocks-process-clt} - fidi convergence}\label{sec:fidi}
Recall the disjoint blocks of size $\dhinterseq$ (cf. \eqref{eq:disjoint-blocks}):
\begin{align*}
J_j:=\{j\dhinterseq+1,\ldots,(j+1)\dhinterseq\}\;, \ \ j=0,\ldots,m_n-1\;.
\end{align*}
These blocks were chosen to calculate the limiting covariance of the process $\mathbb{F}_n$.
However, they are not appropriate for a proof of the central limit theorem. We need to introduce a large-small blocks decomposition.

For this purpose let $z_n$ be a sequence of integers such that $z_n\to\infty$
and
\begin{align}\label{eq:zn}
\lim_{n\to\infty}z_n\dhinterseq\pr(\norm{\bsX_0}>\tepseq)=
\lim_{n\to\infty}\frac{z_n\dhinterseq}{\sqrt{n\pr(\norm{\bsX_0}>\tepseq)}}=\lim_{n\to\infty}
\frac{z_n\dhinterseq}{\sqrt{\statinterseqn}}=0\;.
\end{align}
This is possible thanks to the assumptions \ref{eq:rnbarFun0} and \eqref{eq:further-restriction-on-rn}. We note that this assumption is needed for the Lindeberg condition only.
Set
$$\widetilde{m}_n=\frac{q_n}{(z_n+3)\dhinterseq}=\frac{n-\dhinterseq}{(z_n+3)\dhinterseq}\sim \frac{n}{z_n\dhinterseq}$$
and assume for simplicity that $\widetilde{m}_n$ is an integer. Since $z_n\to\infty$, we have $\widetilde{m}_n=o(m_n)$.
For $j=1,\ldots,\widetilde{m}_n$ define now large and small blocks as follows:
\begin{align*}
&L_1=\{1,\ldots, z_n\dhinterseq\}\;,\ \  S_1=\{z_n\dhinterseq+1\ldots,z_n\dhinterseq+3\dhinterseq\}\;, \\
&L_2=\{z_n\dhinterseq+3\dhinterseq+1,\ldots, 2z_n\dhinterseq+3\dhinterseq\}\;,\ \  S_2=\{2z_n\dhinterseq+3\dhinterseq+1,\ldots,2z_n\dhinterseq+6\dhinterseq\}\;, \\
&L_j=\{(j-1)z_n\dhinterseq+3(j-1)\dhinterseq+1,\ldots, jz_n\dhinterseq+3(j-1)\dhinterseq\}\;,\\ &S_j=\{jz_n\dhinterseq+3(j-1)\dhinterseq+1,\ldots,jz_n\dhinterseq+3j\dhinterseq\}\;.
\end{align*}
The block $L_1$ is obtained by merging $z_n$ consecutive blocks $J_0,\ldots,J_{z_n-1}$ of size $\dhinterseq$. Likewise, $S_1=J_{z_n}\cup J_{z_n+1}\cup J_{z_n+2}$.
Therefore, the large block of size $z_n\dhinterseq$ is followed by the small block of size $3\dhinterseq$, which in turn is followed by the large block of size $z_n\dhinterseq$ and so on. All together,
\begin{align*}
\bigcup_{j=1}^{\widetilde{m}_n}\left(L_j\cup S_j\right)=\{1,\ldots,q_n\}=\{1,\ldots,n-\dhinterseq\}\;.
\end{align*}
Write
\begin{align}\label{eq:large-small-block-process}
&
\sum_{i=1}^{n}H^{\anchor}\left(\bsX_{i-\dhinterseq,i+\dhinterseq}/\tepseq\right)=
\sum_{i=1}^{q_n}H^{\anchor}\left(\bsX_{i-\dhinterseq,i+\dhinterseq}/\tepseq\right)+
\sum_{i=q_n+1}^{n}H^{\anchor}\left(\bsX_{i-\dhinterseq,i+\dhinterseq}/\tepseq\right)\nonumber\\
&=\sum_{j=1}^{\widetilde{m}_n}\Psi_j^{(l)}(H^{\anchor})
+\sum_{j=1}^{\widetilde{m}_n}\Psi_j^{(s)}(H^{\anchor})+W_n\;,
\end{align}
where now
\begin{align*}
\Psi_j^{(l)}(H^{\anchor})=
\sum_{i\in L_j}H^{\anchor}\left(\bsX_{i-\dhinterseq,i+\dhinterseq}/\tepseq\right)\;, \ \
\Psi_j^{(s)}(H^{\anchor})=
\sum_{i\in S_j}H^{\anchor}\left(\bsX_{i-\dhinterseq,i+\dhinterseq}/\tepseq\right)
\;
\end{align*}
and
$$
W_n=\sum_{i=q_n+1}^{n}H^{\anchor}\left(\bsX_{i-\dhinterseq,i+\dhinterseq}/\tepseq\right)
=\sum_{i=n-r_n+1}^{n}H^{\anchor}\left(\bsX_{i-\dhinterseq,i+\dhinterseq}/\tepseq\right)\;.
$$

With such the decomposition, $\bsX_{1-\dhinterseq},\ldots,\bsX_{z_n\dhinterseq+\dhinterseq}$ used in the definition of $\Psi_1^{(l)}(H^{\anchor})$ are separated
by at least $\dhinterseq$ from the random variables that define $\Psi_2^{(l)}(H^{\anchor})$. The mixing condition \eqref{eq:mixing-rates-0} allows us to replace $\bsX$ with the independent blocks process, that is, we can treat the random variables $\Psi_j^{(l)}(H^{\anchor})$, $j=1,\ldots,\widetilde{m}_n$, as independent. The same applies to $\Psi_j^{(s)}(H^{\anchor})$.

Set
\begin{align}\label{eq:Zn-process}
{\mathbb{Z}}_n(H^{\anchor})=\sum_{j=1}^{\widetilde{m}_n}\left\{Z_{n,j}(H^{\anchor})-\esp[Z_{n,j}(H^{\anchor})]\right\}=:
\sum_{j=1}^{\widetilde{m}_n}\bar Z_{n,j}(H^{\anchor})
\end{align}
with
\begin{align}\label{eq:Zn-process-summands}
Z_{n,j}(H^{\anchor})=\frac{1}{\sqrt{\statinterseqn}}\Psi_j^{(l)}(H^{\anchor})\;.
\end{align}
The next steps are standard.
\begin{itemize}
	\item
	First, we show that the limiting variance of the large blocks process ${\mathbb{Z}_n}$ is the same as that of the process $\mathbb{F}_n$;
	\item Next, we show that the small blocks process (the scaled second term in
	\eqref{eq:large-small-block-process}) is negligible;
	\item We show that the boundary term $W_n$ is also negligible;
	\item Finally, we will verify the Lindeberg condition for the large blocks process.
\end{itemize}

\noindent {\bf Variance of the large blocks.}
We have (using the assumed independence of $\Psi_j^{(l)}(H^{\anchor})$)
\begin{align}\label{eq:variance-large-blocks}
&\var\left(\frac{1}{\sqrt{\statinterseqn}}\sum_{j=1}^{\widetilde{m}_n}\Psi_j^{(l)}(H^{\anchor})\right)
=\frac{ \widetilde{m}_n}{\statinterseqn}\var(\Psi_1^{(l)}(H^{\anchor}))\notag\\
&\sim\frac{1}{ z_n\dhinterseq \pr(\norm{\bsX_0}>\tepseq)}\var\left(\sum_{i=1}^{z_n\dhinterseq}H^{\anchor}\left(\bsX_{i-\dhinterseq,i+\dhinterseq}/\tepseq\right)\right)
\notag\\
&= \frac{1}{z_n\dhinterseq \pr(\norm{\bsX_0}>\tepseq)}\var\left(\sum_{j=0}^{z_n-1}H_{n,j}^{\anchor}\right)
\;,
\end{align}
where $H_{n,j}^\anchor$ is defined in \eqref{eq:Hnj} and
where in the last line we decomposed the block $L_1=\{1,\ldots,z_n\dhinterseq\}$ into $z_n$ disjoint blocks $J_0,\ldots,J_{z_n-1}$ and $\widetilde{m}_n\sim m_n/z_n$.
The next steps follow easily from \eqref{eq:covariance-calculation} with $m_n$ replaced by $z_n$.

The term in \eqref{eq:variance-large-blocks} becomes
\begin{align}\label{eq:decomposition-variance-large}
\frac{\var\left(H_{n,0}^{\anchor}\right)}{\dhinterseq\pr(\norm{\bsX_0}>\tepseq)}
+\frac{2}{\dhinterseq \pr(\norm{\bsX_0}>\tepseq)}\sum_{j=1}^{z_n-1}\left(1-\frac{j}{z_n}\right)
\cov(H_{n,0}^{\anchor},H_{n,j}^{\anchor})\;.
\end{align}
It follows immediately from \eqref{eq:asymptotics-1} that the limit of the first term above is
\begin{align}\label{eq:asymptotics-1new}
&\lim_{n\to\infty}
\frac{\var\left(H_{n,0}^{\anchor}\right)}{\dhinterseq\pr(\norm{\bsX_0}>\tepseq)}
%\nonumber\\
%&=
%\lim_{n\to\infty}
%\frac{1}{\pr(\norm{\bsX_0}>\tepseq)}\sum_{h=-\dhinterseq}^{\dhinterseq}
%\left(1-\frac{|h|}{\dhinterseq}\right)\esp[
%H^{\anchor}\left(\bsX_{0,2\dhinterseq}/\tepseq\right){H}^{\anchor}\left(\bsX_{h,h+2\dhinterseq}/\tepseq\right)]
=\tailmeasurestar(H^2)\;.
\end{align}
Now, for the second term in \eqref{eq:decomposition-variance-large} we adapt the proof of \Cref{lem:covariance-runs} from $m_n$ to $z_n$.

As in \eqref{eq:covariance-term},
for $j\geq 1$,
\begin{align*}
\frac{1}{\dhinterseq\pr(\norm{\bsX_0}>\tepseq)}\left|\cov(H_{n,0}^\anchor,{H}_{n,j}^{{\anchor}})\right|
%%&=
%\frac{1}{\dhinterseq\pr(\norm{\bsX_0}>\tepseq)}\cov\left(\sum_{h=1}^{\dhinterseq}H^{\anchor} \left(\backshift^{-h}\bsX_{-\dhinterseq,\dhinterseq}/\tepseq\right),
%\sum_{i=1}^{\dhinterseq}\widetilde{H}^{\widetilde\anchor} \left(\backshift^{-i}\bsX_{(j-1)\dhinterseq,(j+1)\dhinterseq}/\tepseq\right)
%\right)\\
%&=\sum_{h=(j-1)\dhinterseq+1}^{j\dhinterseq}\left(\frac{h}{\dhinterseq}-(j-1)\right){g}_n(h) +
%\sum_{h=j\dhinterseq+1}^{(j+1)\dhinterseq}\left((j+1)-\frac{h}{\dhinterseq}\right){g}_n(h)\\
&
\leq \sum_{h=(j-1)\dhinterseq+1}^{(j+1)\dhinterseq}|{g}_n(h)|=:I_{j}
\end{align*}
with (this time)
\begin{align*}
{g}_n(h)&=\frac{1}{\pr(\norm{\bsX_0}>\tepseq)}\cov(H^{\anchor}(\bsX_{-\dhinterseq,\dhinterseq}),
{H}^{\anchor}(\bsX_{h-\dhinterseq,h+\dhinterseq}))\;.
\end{align*}
For $h>2\dhinterseq$, similarly to \eqref{eq:covariance-term-1}, we have by \eqref{eq:davydov-2},
\begin{align*}
|{g}_n(h)|&\leq \frac{\|H\|_{\infty}^2}{\pr(\norm{\bsX_0}>\tepseq)} \beta_{h-2\dhinterseq}\;.
\end{align*}
Thus,
\begin{align*}
&\frac{1}{\dhinterseq \pr(\norm{\bsX_0}>\tepseq)}   \sum_{j=4}^{z_n-1}|\cov(H_{n,0}^{\anchor},{H}_{n,j}^{\anchor})|\leq
\frac{\|H\|_{\infty}^2}{\pr(\norm{\bsX_0}>\tepseq)}  \sum_{j=4}^{z_n-1}
\sum_{h=(j-1)\dhinterseq+1}^{(j+1)\dhinterseq}\beta_{h-2\dhinterseq}\\
&\leq 2\frac{\|H\|_{\infty}^2}{\pr(\norm{\bsX_0}>\tepseq)}  \sum_{h=3\dhinterseq+1}^{\infty}\beta_{h-2\dhinterseq}=O(1)\frac{1}{\pr(\norm{\bsX_0}>\tepseq)}
\sum_{i=\dhinterseq+1}^{\infty}\beta_{i}=o(1)
\end{align*}
by the assumption \eqref{eq:mixing-rates-runs}.
The terms that correspond to $j=1,2,3$ in \eqref{eq:covariance-calculation} are negligible.

In summary, we showed that
$$
\lim_{n\to\infty}\var\left(\frac{1}{\sqrt{\statinterseqn}}\sum_{j=1}^{\widetilde{m}_n}\Psi_j^{(l)}(H^{\anchor})\right)
=\tailmeasurestar(H^2)\;.
$$

\noindent {\bf Variance of the small blocks.} We have (using again the assumed independence of $\Psi_j^{(s)}(H^{\anchor})$ thanks to the beta-mixing)
\begin{align*}
\var\left(\frac{1}{\sqrt{\statinterseqn}}\sum_{j=1}^{\widetilde{m}_n}\Psi_j^{(s)}(H^{\anchor})\right)
=\frac{ \widetilde{m}_n}{\statinterseqn}\var(\Psi_1^{(s)}(H^{\anchor}))
\sim \frac{1}{ z_n\dhinterseq \pr(\norm{\bsX_0}>\tepseq)}\var(\Psi_1^{(s)}(H^{\anchor}))\;.
\end{align*}
Since the size of $\Psi_1^{(s)}(H^{\anchor})$ is $3$ times the size of $H_{n,1}^{\anchor}$ defined in \eqref{eq:Hnj}, we have by \eqref{eq:asymptotics-1}
\begin{align*}
\var\left(\frac{1}{\sqrt{\statinterseqn}}\sum_{j=1}^{\widetilde{m}_n}\Psi_j^{(s)}(H^{\anchor})\right)
\sim \frac{1}{z_n\dhinterseq \pr(\norm{\bsX_0}>\tepseq)}\dhinterseq \pr(\norm{\bsX_0}>\tepseq)\tailmeasurestar(H^2)
=O(1/z_n)=o(1)\;.
\end{align*}

\noindent {\bf Variance of the boundary term $W_n$.} We have (cf. \eqref{eq:Hnj})
\begin{align*}
\var\left(\frac{1}{\sqrt{\statinterseqn}}W_n\right)=
\var\left(\frac{1}{\sqrt{\statinterseqn}}
\sum_{i=1}^{\dhinterseq}H^{\anchor}\left(\bsX_{i-\dhinterseq,i+\dhinterseq}/\tepseq\right)\right)
=\frac{\var(H_{n,0}^\anchor)}{\statinterseqn}=\frac{\var(H_{n,0}^\anchor)}{n\pr(\norm{\bsX_0}>\tepseq)}\;.
\end{align*}
The latter term vanishes when $n\to\infty$, using
\eqref{eq:asymptotics-1new} and $\dhinterseq/n\to 0$.

\noindent {\bf Lindeberg condition for ${\mathbb Z}_n(H^{\anchor})$.}
We need to show that for all $\eta>0$,
\begin{align}\label{eq:lindeberg}
\lim_{n\to\infty}\widetilde{m}_n\esp\left[Z_{n,1}^2(H^{\anchor})\ind{|Z_{n,1}(H^{\anchor})|> \eta}\right]=0\;.
\end{align}
Since $H$ is bounded, then by \eqref{eq:zn},
\begin{align}\label{eq:lindeberg-1}
|Z_{n,1}(H^{\anchor})|\leq \frac{\sqrt{\statinterseqn}z_n\dhinterseq}{n \pr(\norm{\bsX_0}>\tepseq)}\|H\|_{\infty}\sim
\frac{z_n\dhinterseq}{\sqrt{n\pr(\norm{\bsX_0}>\tepseq)}}\|H\|_{\infty}=o(1)\;.
\end{align}
Thus, the indicator in \eqref{eq:lindeberg} becomes zero for large $n$.

\subsection{Proof of \Cref{thm:sliding-blocks-process-clt} - asymptotic equicontinuity}\label{sec:tightness}
We need the following lemma which is an adapted version of Theorem~2.11.1 in \cite{vandervaart:wellner:1996}.
Let $\mathbb{Z}_n$ be the empirical process indexed by a \semimetric\ space $(\mcg,\metricmcg)$, defined by
\begin{align*}
%\label{eq:process-bbZ}
\bbZ_n(f) = \sum_{j=1}^{\widetilde{m}_n}\left\{Z_{n,j}(f)-\esp[Z_{n,j}(f)]\right\} \; ,
\end{align*}
where $\{Z_{n,j},n\geq 1\}$, $j=1,\ldots,\widetilde{m}_n$, are \iid\ separable, stochastic processes and $\widetilde{m}_n$
is a sequence of integers such that $\widetilde{m}_n\to\infty$.
Define the random \semimetric\ $d_n$ on $\mcg$ by
\begin{align*}
%\label{eq:random-semimetric}
d_n^2(f,g) = \sum_{j=1}^{\widetilde{m}_n} \{Z_{n,j}(f)-Z_{n,j}(g)\}^2 \; , f,g\in \mcg \; .
\end{align*}
\begin{lemma}%[Adapted from {\cite[Theorem~2.11.1]{vandervaart:wellner:1996}}]
	\label{theo:VW2.11.1}
	%Consider the sequence of processes
	%  \begin{align*}
	%  \bbZ_n(f) = \sum_{j=1}^{m_n}\left\{Z_{n,j}(f)-\esp[Z_{n,j}(f)]\right\} \;,
	%\end{align*}
	%where for each $n\geq1$, $\{Z_{n,j},j=1,\ldots,m_n\}$ are \iid\ separable stochastic processes.
	Assume that $(\mcg,\metricmcg)$ is totally bounded.  Assume moreover that:
	\begin{enumerate}[(i)]
		\item \label{item:lindeberg-envelope}  For all $\eta>0$,
		\begin{align}
		\label{eq:lindeberg-envelope}
		\lim_{n\to\infty} {\widetilde{m}_n} \esp[\|Z_{n,1}\|^2_\mcg\ind{\|Z_{n,1}\|^2_\mcg>\eta}] = 0 \; .
		\end{align}
		\item \label{item:continuity-l2} For every sequence $\{\delta_n\}$ which decreases to zero,
		\begin{align}\label{eq:continuity-l2}
		\lim_{n\to\infty} \sup_{f,g\in\mcg\atop \rho(f,g)\leq\delta_n} \esp[d_n^2(f,g)] = 0 \; . %\label{eq:continuite-L2}
		\end{align}
		\item \label{item:random-entropy} There exists a measurable majorant $N^*(\mcg,d_n,\epsilon)$
		of the covering number $N(\mcg,d_n,\epsilon)$ such that for every sequence $\{\delta_n\}$ which
		decreases to zero,
		\begin{align*}
		\int_0^{\delta_n} \sqrt{\log N^*(\mcg,d_n,\epsilon)} \rmd\epsilon\convprob 0  \; .
		\end{align*}
	\end{enumerate}
	Then $\{\mathbb{Z}_n,n\geq 1\}$ is asymptotically $\rho$-equicontinuous, \ie\ for each $\eta>0$,
	\begin{align*}
	%\label{eq:asymptotic-equicontinuity-in-random-entropy}
	\lim_{\delta\to0} \limsup_{n\to\infty} \pr\left(\sup_{{f,g\in \mcg}\atop{\rho(f,g)<\delta}} |\mathbb{Z}_n(f)-\mathbb{Z}_n(g)| > \eta\right) = 0 \; .
	\end{align*}
\end{lemma}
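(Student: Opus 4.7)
The plan is to adapt the proof of Theorem~2.11.1 in \cite{vandervaart:wellner:1996} to the present triangular-array setting. The three hypotheses---a Lindeberg condition on the envelope, an $L^2$-continuity condition along vanishing $\metricmcg$-neighbourhoods, and random entropy for $d_n$---are exactly the ingredients that make the generic chaining argument for \iid\ summands go through at each level of the array.

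First I would reduce to a symmetrized process. Let $\{\varepsilon_j\}_{j=1}^{\widetilde{m}_n}$ be \iid\ Rademacher signs independent of everything else. After truncating each $Z_{n,j}$ at a level $M_n\to\infty$ (the discarded mass is negligible by the envelope Lindeberg condition~\eqref{eq:lindeberg-envelope}), the standard symmetrization inequality for sums of independent centred random processes yields, up to a vanishing term,
\begin{align*}
\esp\!\left[\sup_{\metricmcg(f,g)<\delta}|\bbZ_n(f)-\bbZ_n(g)|\right]
\le 2\,\esp\!\left[\sup_{\metricmcg(f,g)<\delta}\Big|\sum_{j=1}^{\widetilde{m}_n}\varepsilon_j(Z_{n,j}(f)-Z_{n,j}(g))\Big|\right].
\end{align*}

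Second, conditionally on $(Z_{n,j})_{j=1}^{\widetilde{m}_n}$ the symmetrized process $f\mapsto\sum_j\varepsilon_j Z_{n,j}(f)$ is sub-Gaussian with respect to the random semimetric $d_n$ by Hoeffding's inequality. Dudley's chaining maximal inequality then gives, for any $\delta>0$,
\begin{align*}
\esp_{\varepsilon}\!\left[\sup_{d_n(f,g)<\delta}\Big|\sum_j\varepsilon_j(Z_{n,j}(f)-Z_{n,j}(g))\Big|\right]
\le C\int_0^{\delta}\sqrt{\log N^*(\mcg,d_n,\epsilon)}\,\rmd\epsilon ,
\end{align*}
and this integral is $o_{\pr}(1)$ along any sequence $\delta=\delta_n\downarrow 0$ thanks to the random entropy assumption~\eqref{eq:randomentropy}.

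Third, I would transfer the $d_n$-oscillation bound back to a $\metricmcg$-oscillation bound. Total boundedness of $(\mcg,\metricmcg)$ provides a finite $\metricmcg$-net $\mcg_\delta$ at scale $\delta$; on the pairs of $\mcg_\delta$ within $\metricmcg$-distance $\delta_n$, Markov's inequality applied to $\esp[d_n^2(f,g)]$ together with \eqref{eq:continuity-l2} shows that with probability tending to one these pairs are also close in $d_n$, so the supremum over $\{\metricmcg(f,g)<\delta_n\}$ is controlled by a supremum over $\{d_n(f,g)<\delta_n'\}$ for some $\delta_n'\downarrow 0$. Combining with the chaining bound and Markov's inequality on the unsymmetrized process produces the claimed asymptotic $\metricmcg$-equicontinuity. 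The main obstacle will be synchronising the four sequences $(\widetilde{m}_n,M_n,\delta_n,\delta_n')$ so that the truncation error, the random-entropy tail, and the $d_n$-to-$\metricmcg$ transfer all vanish simultaneously; once that bookkeeping is in place, the remainder of the argument is the verbatim chaining proof applied conditionally on the sample.
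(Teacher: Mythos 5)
The paper offers no proof of this lemma: it is stated as an adapted version of Theorem~2.11.1 of van der Vaart and Wellner, accompanied only by a remark explaining that the added separability assumption supplies the measurability needed there. Your sketch correctly reconstructs the standard proof of that theorem --- truncation controlled by the envelope Lindeberg condition, symmetrization, conditional sub-Gaussianity with respect to $d_n$ plus the random entropy integral, and the transfer from $d_n$ back to $\rho$ via condition (ii) and total boundedness --- which is exactly the argument the paper's citation relies on.
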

\begin{remark}
	The
	separability assumption is not in \cite{vandervaart:wellner:1996}.  It implies measurability of
	$\|Z_{n,1}\|_{\mcg}$.  Furthermore, the separability also implies that for all $\delta>0$,
	$n\in\Nset$, $(e_j)_{1\leq j\leq \widetilde{m}_n}\in\{-1,0,1\}^{\widetilde{m}_n}$ and $i\in \{1,2\}$, the supremum
	\begin{align*}
	\sup_{f,g\in\mcg \atop \metricmcg(f,g) <\delta} \left|\sum_{j=1}^{\widetilde{m}_n} e_j
	\left(Z_{n,j}(f)-Z_{n,j}(g)\right)^i\right|
	& = \sup_{f,g\in\mcg_0 \atop \metricmcg(f,g) <\delta} \left|\sum_{j=1}^{\widetilde{m}_n} e_j \left(Z_{n,j}(f)-Z_{n,j}(g)\right)^i\right| \;
	\end{align*}
	is measurable, which is an assumption of  \cite{vandervaart:wellner:1996}.
	\remarkend
\end{remark}

\subsubsection{Asymptotic equicontinuity of the empirical process of sliding blocks}\label{sec:tightness-1}
Recall the big-blocks process ${\mathbb{Z}}_n(H^{\anchor})$ (cf. \eqref{eq:Zn-process}-\eqref{eq:Zn-process-summands}).
Recall also that thanks to the $\beta$-mixing we can consider random variables $\Psi_j^{(l)}(H^{\anchor})$, $j=1,\ldots,\widetilde{m}_n$ to be independent.
Recall that $H^{\anchor}_s$ is defined in \eqref{eq:HC with s}.
We need to prove the asymptotic equicontinuity of ${\mathbb{Z}}_n(H_s^{\anchor})$
indexed by the class $\mcg = \{H^{\anchor}_s,s\in[s_0,t_0]\}$ equipped with the metric $\ltwotmsmetric(H_s^{\anchor},H_t^\anchor)=\tailmeasurestar(\{H_s^{\anchor}-{H}_t^{\anchor}\}^2)$.
The same argument can be used to prove the asymptotic equicontinuity for the small blocks process. This yields asymptotic equicontinuity of
${\mathbb{F}}_n(H^{\anchor}_{s})$.

In what follows, the proof of the Lindeberg-type condition \eqref{eq:lindeberg-envelope} is easy. The proof of
\eqref{eq:continuity-l2} is quite involved.

Thanks to \Cref{hypo:entropy}, the condition \eqref{eq:randomentropy}
is satisfied. Its validity is discussed in \Cref{sec:random-entropy}.

\paragraph{Lindeberg condition: Proof of \eqref{eq:lindeberg-envelope}.}
We re-write \eqref{eq:lindeberg-1} as follows:
\begin{align*}
\sup_{s\in [s_0,t_0]}|Z_{n,1}(H_s^{\anchor})|\leq \frac{\sqrt{\statinterseqn}z_n\dhinterseq}{n \pr(\norm{\bsX_0}>\tepseq)}\|H\|_{\infty}\leq
\frac{z_n\dhinterseq}{\sqrt{n\pr(\norm{\bsX_0}>\tepseq)}}\sup_{s\in [s_0,t_0]}\|H_s\|_{\infty}\;,
\end{align*}
Since the class $\{H_s:s\in [s_0,t_0]\}$ is linearly ordered, $\sup_{s\in [s_0,t_0]}\|H_s\|_{\infty}$ is achieved either at $s=s_0$ or $s=t_0$. Hence,
the Lindeberg condition~\ref{item:lindeberg-envelope} of~\Cref{theo:VW2.11.1} holds by \eqref{eq:lindeberg}.

\paragraph{Asymptotic continuity of random semi-metric:
	Proof of \eqref{eq:continuity-l2}.}

The proof is rather long and technical.

Define the random metric
\begin{align*}
%\label{eq:random-metric-clusters}
d_n^2(H_s^{\anchor},{H}_t^{\anchor}) =
\sum_{j=1}^{\widetilde m_n} (Z_{n,j}(H_s^{\anchor})-Z_{n,j}({H}_t^{\anchor}))^2 \; .
\end{align*}
Let (cf. \eqref{eq:Hnj})
\begin{align*}
H_{s,n,j}^{\anchor}&=\sum_{i=j\dhinterseq+1}^{(j+1)\dhinterseq}
H\left(\bsX_{i-\dhinterseq,i+\dhinterseq}/(s\tepseq)\right)
\ind{\anchor\left(\bsX_{i-\dhinterseq,i+\dhinterseq}/(s\tepseq)\right)=i}\ind{\norm{\bsX_i}>s\tepseq}\;.
\end{align*}
We need to evaluate
$\esp[d_n^2(H^{\anchor}_s,H^{\anchor}_t)]$:
\begin{align}\label{eq:tightness-evaluation-1}
&\esp[d_n^2(H^{\anchor}_s,H^{\anchor}_t)]\notag\\
&
=\frac{\statinterseqn\widetilde{m}_n}{(n\pr(\norm{\bsX_0}>\tepseq))^2}
\esp\left[\left(\sum_{i=1}^{z_n\dhinterseq}\left\{H^{\anchor}_s\left(\bsX_{i-\dhinterseq,i+\dhinterseq}/\tepseq\right)-
H^{\anchor}_t\left(\bsX_{i-\dhinterseq,i+\dhinterseq}/\tepseq\right)\right\}\right)^2\right]
\notag\\
&\sim \frac{1}{ z_n\dhinterseq \pr(\norm{\bsX_0}>\tepseq)}
\esp\left[\left(\sum_{j=0}^{z_n-1}\left\{H_{s,n,j}^{\anchor}-H_{t,n,j}^{\anchor}\right\}\right)^2\right]
\;,
\end{align}
where in the last line we decomposed the block $L_1$ into $z_n$ disjoint blocks $J_0,\ldots,J_{z_n-1}$,  $\widetilde{m}_n\sim m_n/z_n$; cf. \eqref{eq:variance-large-blocks}.
The term in \eqref{eq:tightness-evaluation-1} becomes
\begin{align} &\frac{\esp[\left\{H_{s,n,0}^{\anchor}-H_{t,n,0}^{\anchor}\right\}^2]}{\dhinterseq\pr(\norm{\bsX_0}>\tepseq)}\nonumber\\
&+2\frac{1}{\dhinterseq \pr(\norm{\bsX_0}>\tepseq)}\sum_{j=1}^{z_n-1}\left(1-\frac{j}{z_n}\right) \esp[\left\{H_{s,n,0}^{\anchor}-H_{t,n,0}^{\anchor}\right\}\left\{H_{s,n,j}^{\anchor}-H_{t,n,j}^{\anchor}\right\}]
\nonumber
\end{align}
The above lines correspond  to \eqref{eq:covariance-calculation} with $m_n$ replaced by $z_n$.

We are going to prove two statements:
\begin{align}\label{eq:bound-on-the-first-term-case-1}
\lim_{n\to\infty}\sup_{s_0\leq s, t \leq t_0 \atop |s-t|\leq\delta_n} \frac{\esp[\left\{H_{s,n,0}^{\anchor}-H_{t,n,0}^{\anchor}\right\}^2]}{\dhinterseq\pr(\norm{\bsX_0}>\tepseq)}=0\;
\end{align}
and
\begin{align}\label{eq:bound-on-the-second-term-case-1}
\lim_{n\to\infty}\sup_{s_0\leq s, t \leq t_0 \atop |s-t|\leq\delta_n}
\frac{1}{\dhinterseq \pr(\norm{\bsX_0}>\tepseq)}\sum_{j=1}^{z_n-1}\left(1-\frac{j}{z_n}\right) \esp[\left\{H_{s,n,0}^{\anchor}-H_{t,n,0}^{\anchor}\right\}\left\{H_{s,n,j}^{\anchor}-H_{t,n,j}^{\anchor}\right\}]=0\;.
\end{align}

\paragraph{Proof of \eqref{eq:bound-on-the-first-term-case-1}.}
We will write $\{H_s^\anchor-H_t^\anchor\}(\bsx)$ for
$H_s^\anchor(\bsx)-H_t^\anchor(\bsx)$.

Similarly to \eqref{eq:asymptotics-1},
\begin{align}
&\frac{\esp[\left\{H_{s,n,0}^{\anchor}-H_{t,n,0}^{\anchor}\right\}^2]}{\dhinterseq\pr(\norm{\bsX_0}>\tepseq)}\nonumber\\
&\leq
\|H\|\frac{1}{\pr(\norm{\bsX_0}>\tepseq)}\sum_{h=-\dhinterseq}^{\dhinterseq}
\left|\esp[
\{H_s^{\anchor}-H_t^\anchor\}\left(\bsX_{-\dhinterseq,\dhinterseq}/\tepseq\right)\times
\{{H}_s^{\anchor}-H_t^\anchor\}\left(\bsX_{h-\dhinterseq,h+\dhinterseq}/\tepseq\right)]\right|\nonumber\\
&=:\|H\|\sum_{h=-\dhinterseq}^{\dhinterseq}|g_n(h,H_s^\anchor-H_t^\anchor)|
\;\label{eq:decomposition-0-term-tightness}
\end{align}
with
\begin{align}\label{eq:bound-gn-E}
|{g}_n(h,G)|=\left|\frac{1}{\pr(\norm{\bsX_0}>\tepseq)}\esp[G(\bsX_{-\dhinterseq,\dhinterseq}/\tepseq)
G(\bsX_{h-\dhinterseq,h+\dhinterseq}/\tepseq)]\right|\;.
\end{align}
Using the definition \eqref{eq:HC with s} of $H_s^\anchor$, the fact that $s,t\geq s_0$ and since $H$ is bounded, we immediately get
\begin{align}\label{eq:immediate-bound-on-gn}
|{g}_n(h,H_s^\anchor-H_t^\anchor)|\leq \frac{4}{\pr(\norm{\bsX_0}>\tepseq)}\|H\|^2 \pr(\norm{\bsX_0}>s_0\tepseq,\norm{\bsX_h}>s_0\tepseq)\;.
\end{align}
To get a more precise bound that involves the difference $s-t$ we need to consider three cases. The reason for this is that we need to keep the absolute value in \eqref{eq:bound-gn-E} outside of the expectation. As such, computations below are quite technically involved.

To shorten our displays, we introduce the notation
\begin{align}\label{eq:indicator-notation}
\mci(i,s):=\ind{\norm{\bsX_i}>s\tepseq}\;.
\end{align}
\paragraph{Case 1.} Assume here that $\anchor$ is $0$-homogeneous. Then for any $i$,
\begin{align}\label{eq:specification-of-Hs}
H_s^\anchor\left(\bsX_{i-\dhinterseq,i+\dhinterseq}/\tepseq\right)&=
H\left(\bsX_{i-\dhinterseq,i+\dhinterseq}/(s\tepseq)\right)
\ind{\anchor\left(\bsX_{i-\dhinterseq,i+\dhinterseq}/(s\tepseq)\right)=i}\mci(i,s)\\
&=H_s\left(\bsX_{i-\dhinterseq,i+\dhinterseq}/\tepseq\right)
\ind{\anchor\left(\bsX_{i-\dhinterseq,i+\dhinterseq}/\tepseq\right)=i}\mci(i,s)\;;
\nonumber
\end{align}
(we keep $\tepseq$ in the argument of $\anchor$, although it can be omitted). What is important in this decomposition is that we can control monotonicity (withe respect to $s$) of each term.

Then
\begin{align*}
&H_s^\anchor\left(\bsX_{i-\dhinterseq,i+\dhinterseq}/\tepseq\right)-H_t^\anchor\left(\bsX_{i-\dhinterseq,i+\dhinterseq}/\tepseq\right)=\\
&=\ind{\anchor\left(\bsX_{i-\dhinterseq,i+\dhinterseq}/\tepseq\right)=i}
H_s\left(\bsX_{i-\dhinterseq,i+\dhinterseq}/\tepseq\right)\Big(\mci(i,s)-\mci(i,t)\Big)\\
&\phantom{=}+
\ind{\anchor\left(\bsX_{i-\dhinterseq,i+\dhinterseq}/\tepseq\right)=i}
\mci(i,t)\Big(H_s\left(\bsX_{i-\dhinterseq,i+\dhinterseq}/\tepseq\right)-
H_t\left(\bsX_{i-\dhinterseq,i+\dhinterseq}/\tepseq\right)\Big)\\
&=: T_1(i)+T_2(i)
\end{align*}
and hence
\begin{align*}
\left|\esp\left[\{H_s^{\anchor}-H_t^\anchor\}\left(\bsX_{-\dhinterseq,\dhinterseq}/\tepseq\right) \{H_s^{\anchor}-H_t^\anchor\}\left(\bsX_{h-\dhinterseq,h+\dhinterseq}/\tepseq\right)\right]\right|
\end{align*}
is bounded by the sum of four nonnegative terms $W_{11}+W_{22}+W_{12}+W_{21}$ that we are going to define below. The general idea is that we will obtain rough bounds in terms of the difference
$\Big(\mci(i,s)-\mci(i,t)\Big)$, except of one case which involves the product $T_2(0)T_2(h)$.
Coming back to the definitions of $W_{i,j}$, the double sub-index $\{12\}$ of $W$ indicates that $W_{12}$ is related to multiplying $T_1(0)$ by $T_2(h)$; $W_{21}$ means we multiply $T_2(0)$ and $T_1(h)$):
\begin{align*}
W_{11}:=\|H\|^2\esp\left[\left(\mci(0,s)-\mci(0,t)\right)
\left(\mci(h,s)-\mci(h,t)\right)\right]\;,
\end{align*}
(above, the indicators of the anchoring map are omitted);
\begin{align*}
&W_{22}:=\\
&\esp\left[\ind{\anchor\left(\bsX_{-\dhinterseq,\dhinterseq}/\tepseq\right)=0}\mci(0,t)\left\{H_s-H_t\right\}\left(\bsX_{-\dhinterseq,\dhinterseq}/\tepseq\right)
\left\{H_s-H_t\right\}\left(\bsX_{h-\dhinterseq,h+\dhinterseq}/\tepseq\right)\right]\;,
\end{align*}
(above, $\ind{\anchor\left(\bsX_{h-\dhinterseq,h+\dhinterseq}/\tepseq\right)=h}$ and $\mci(h,t)$ are omitted);
\begin{align*}
&W_{12}:=\pm \|H\|\esp\left[\Big(\mci(0,s)-\mci(0,t)\Big)
\left\{H_s-H_t\right\}\left(\bsX_{h-\dhinterseq,h+\dhinterseq}/\tepseq\right)\right]\;,
\end{align*}
(both indicators of the anchoring maps and $\mci(h,t)$ are omitted);
\begin{align*}
&W_{21}:=\pm \|H\|\esp\left[\Big(\mci(h,s)-\mci(h,t)\Big)
\left\{H_s-H_t\right\}\left(\bsX_{-\dhinterseq,\dhinterseq}/\tepseq\right)\right]\;,
\end{align*}
(both indicators of the anchoring maps and $\mci(0,t)$ are omitted).

Note that the \rhs\ of both $W_{11}, W_{22}$ is nonnegative thanks to the monotonicity and $s<t$, while for the \rhs\ of $W_{12},W_{21}$ we need to put $\pm$, since the sign of the expressions there depends on whether the map $s\to H_s$ is decreasing or increasing.

Thus,
\begin{align}\label{eq:W1}
W_{11}\leq \|H\|^2 \left(\pr(\norm{\bsX_0}>\tepseq s)-\pr(\norm{\bsX_0}>\tepseq t)\right)\;,
\end{align}
\begin{align}\label{eq:W2}
&W_{22}\leq \pm 2\|H\|\esp\left[\ind{\anchor\left(\bsX_{-\dhinterseq,\dhinterseq}/\tepseq\right)=0}\mci(0,t)\left\{H_s-H_t\right\}\left(\bsX_{-\dhinterseq,\dhinterseq}/\tepseq\right)\right]\;,
\end{align}
\begin{align}\label{eq:W3}
&W_{12}+W_{21}\leq 4\|H\|^2 \left(\pr(\norm{\bsX_0}>\tepseq s)-\pr(\norm{\bsX_0}>\tepseq t)\right)\;.
\end{align}
The bound on $W_{22}$ (with $+$) is obvious if $H_s-H_t\geq 0$ (thus, $s\to H_s$ is decreasing), while in an increasing case of $s\to H_s$ we use the following observation: if $a,b<0$, $|b|<c$, then $ab\leq -ac$ (yielding $-$ on the \rhs\ of \eqref{eq:W2}).   The bound on $W_{12}+W_{21}$ follows from the same reasoning.

In summary, with $g_n(h,H_s^\anchor-H_t^\anchor)$ defined in \eqref{eq:bound-gn-E}, we have
\begin{align}
&|g_n(h,H_s^\anchor-H_t^\anchor)|\leq 5\|H\|^2 \frac{\pr(\norm{\bsX_0}>\tepseq s)-\pr(\norm{\bsX_0}>\tepseq t)}{\pr(\norm{\bsX_0}>\tepseq)}\label{eq:tightness-0}\\
&\pm 2\|H\|\frac{\esp\left[\ind{\anchor\left(\bsX_{-\dhinterseq,\dhinterseq}/\tepseq\right)=0}\ind{\norm{\bsX_0}>s_0\tepseq}\left\{H_s-H_t\right\}\left(\bsX_{-\dhinterseq,\dhinterseq}/\tepseq\right)\right]}{\pr(\norm{\bsX_0}>\tepseq)}\;,
\label{eq:tightness-1}
\end{align}
where again the presence of $\pm$ depends on the sign of $H_s-H_t$.

We can ignore the scaling factor $2\|H\|$ in \eqref{eq:tightness-1} and write it as (recall that the anchoring map is $0$-homogeneous)
\begin{align*}
&\esp[H_{s/s_0}\left(\bsX_{-\dhinterseq,\dhinterseq} /(s_0\tepseq)\right)
\ind{\anchor\left(\bsX_{-\dhinterseq,\dhinterseq} /(s_0\tepseq)\right)=0}\mid \norm{\bsX_0}>s_0\tepseq]	\frac{\pr(\norm{\bsX_0}>s_0\tepseq)}{\pr(\norm{\bsX_0}>\tepseq)}\\
&\phantom{=}-\esp[H_{t/s_0}\left(\bsX_{-\dhinterseq,\dhinterseq} /(s_0\tepseq)\right)
\ind{\anchor\left(\bsX_{-\dhinterseq,\dhinterseq} /(s_0\tepseq)\right)=0}\mid \norm{\bsX_0}>s_0\tepseq]\frac{\pr(\norm{\bsX_0}>s_0\tepseq)}{\pr(\norm{\bsX_0}>\tepseq)}\\
&= \Big( \mu_{n,\dhinterseq}(s)  - \mu_{n,\dhinterseq}(t)\Big)\frac{\pr(\norm{\bsX_0}>s_0\tepseq)}{\pr(\norm{\bsX_0}>\tepseq)}=:
\Big( \tilde\mu_{n,\dhinterseq}(s)  - \tilde\mu_{n,\dhinterseq}(t)\Big)\;,
\end{align*}
with
\begin{align*}
\mu_{n,\dhinterseq}(\cdot )&=\esp[H_{\cdot/s_0}\left(\bsX_{-\dhinterseq,\dhinterseq} /(s_0 \tepseq)\right)
\ind{\anchor\left(\bsX_{-\dhinterseq,\dhinterseq} /(s_0 \tepseq)\right)=0}\mid \norm{\bsX_0}>s_0 \tepseq]\;
\end{align*}
and
$$
\tilde\mu_{n,\dhinterseq}(s)=\mu_{n,\dhinterseq}(s)\frac{\pr(\norm{\bsX_0}>s_0\tepseq)}{\pr(\norm{\bsX_0}>\tepseq)}\;.
$$
Thanks to \eqref{eq:1dim-conv}, $\lim_{n\to\infty}\mu_{n,\dhinterseq}(s)=\tailmeasurestar(H_{s/s_0})$. Thanks to the monotonicity of $s\to H_S$ and homogeneity of $\tailmeasurestar$,
the convergence of $\tilde\mu_{n,\dhinterseq}(s)$
to $$s_0^{-\alpha}\tailmeasurestar(H_{s/s_0})=s_0^{-\alpha}(s/s_0)^{-\alpha}\tailmeasurestar(H)=s^{-\alpha}\tailmeasurestar(H)$$ is uniform on $[s_0,t_0]$. Thus,
for $s,t\in[s_0,t_0]$,
\begin{multline*}
\left| \tilde\mu_{n,\dhinterseq}(s)  - \tilde\mu_{n,\dhinterseq}(t) \right|
\leq 2 \sup_{s_0\leq u \leq t_0} \left| \tilde\mu_{n,\dhinterseq}(u ) -
\tailmeasurestar(H_{u}) \right| + \tailmeasurestar(H) \{s^{-\alpha} - t^{-\alpha}\}\;.
\end{multline*}
Fix $\eta>0$. For large enough $n$, the uniform convergence yields
\begin{multline}
\sup_{s_0\leq s, t \leq t_0 \atop |s-t|\leq\delta_n} \left|
\tilde\mu_{n,\dhinterseq}(s) - \tilde\mu_{n,\dhinterseq}(t) \right|
\leq \eta + \tailmeasurestar(H) \sup_{s_0\leq s, t \leq t_0 \atop |s-t|\leq\delta_n}  \{s^{-\alpha} - t^{-\alpha}\} \\
\leq \eta + \alpha s_0^{-\alpha-1} \delta_n \tailmeasurestar(H)\; .\label{eq:bounds for g(h)}
\end{multline}
The uniform convergence also yields that the term in \eqref{eq:tightness-0} is bounded by $\eta + \alpha s_0^{-\alpha-1} \delta_n$. This, together with \eqref{eq:bounds for g(h)}, gives
\begin{align}\label{eq:s-t-bound-on-gn}
|g_n(h,H_s^\anchor-H_t^\anchor)|\leq \eta + \constant\ \delta_n
\end{align}
with a generic constant $\constant$.

Fix an integer $r$. Using \eqref{eq:immediate-bound-on-gn} and \eqref{eq:s-t-bound-on-gn}
we have
\begin{align*}
&\sup_{s_0\leq s, t \leq t_0 \atop |s-t|\leq\delta_n}\frac{\esp[\left\{H_{s,n,0}^{\anchor}-H_{t,n,0}^{\anchor}\right\}^2]}{\dhinterseq\pr(\norm{\bsX_0}>\tepseq)}\\
&\leq \sup_{s_0\leq s, t \leq t_0 \atop |s-t|\leq\delta_n}\sum_{h=-r}^{r}|g_n(h,H_s^\anchor-H_t^\anchor)|+
\sup_{s_0\leq s, t \leq t_0 \atop |s-t|\leq\delta_n}\sum_{|h|=r}^{\dhinterseq}|g_n(h,H_s^\anchor-H_t^\anchor)|\\
&
\leq \constant\ r(\eta + \delta_n)+\frac{4}{\pr(\norm{\bsX_0}>\tepseq)}\sum_{|h|=r}^{\dhinterseq} \pr(\norm{\bsX_0}>s_0\tepseq,\norm{\bsX_h}>s_0\tepseq)\;.
\end{align*}
Applying the anticlustering conditions \ref{eq:conditionS} to the second term, letting $\delta_n\to 0$, since $\eta$ is arbitrary, this proves \eqref{eq:bound-on-the-first-term-case-1}.

\paragraph{Case 2.}
Now, we consider the anchoring maps $\anchor^{(1)}$ and $\anchor^{(2)}$ which are not 0-homogeneous.
Note that for $j=1,2$ we can write
\begin{align*}
H_s^{\anchor^{(j)}}\left(\bsX_{i-\dhinterseq,i+\dhinterseq}/\tepseq\right)&=
H\left(\bsX_{i-\dhinterseq,i+\dhinterseq}/(s\tepseq)\right)
\ind{\anchor^{(j)}\left(\bsX_{i-\dhinterseq,i+\dhinterseq}/(s\tepseq)\right)=i}\mci(i,s)\\
&= H_s\left(\bsX_{i-\dhinterseq,i+\dhinterseq}/\tepseq\right) \mci(i,s)
F_s\left(\bsX_{i-\dhinterseq,i+\dhinterseq}/\tepseq\right)\;,
\end{align*}
where
\begin{align*}
F_s\left(\bsX_{i-\dhinterseq,i+\dhinterseq}/\tepseq\right)=
\ind{\bsX_{i-\dhinterseq,i-1}^*\leq s\tepseq}
\end{align*}
or
\begin{align*}
F_s\left(\bsX_{i-\dhinterseq,i+\dhinterseq}/\tepseq\right)=
\ind{\bsX_{i+1,i+\dhinterseq}^*\leq s\tepseq}
\end{align*}
in case $j=1$ and $j=2$, respectively. Note that regardless of the monotonicity of the map $s\to \anchor_s^{(j)}$, the map $s\to F_s$ is always \underline{non-decreasing}.
Then \eqref{eq:specification-of-Hs} gives, for any $i\in\Nset$, and $\anchor=\anchor^{(1)},\anchor^{(2)}$,
\begin{align*}
&H_s^\anchor\left(\bsX_{i-\dhinterseq,i+\dhinterseq}/\tepseq\right)-H_t^\anchor\left(\bsX_{i-\dhinterseq,i+\dhinterseq}/\tepseq\right)=\\
&=
H_s\left(\bsX_{i-\dhinterseq,i+\dhinterseq}/\tepseq\right)
F_s\left(\bsX_{i-\dhinterseq,i+\dhinterseq}/\tepseq\right)\bigg(\mci(i,s)-\mci(i,t)\bigg)
\\
&\phantom{=}+\mci(i,t)F_s\left(\bsX_{i-\dhinterseq,i+\dhinterseq}/\tepseq\right)\big(H_s\left(\bsX_{i-\dhinterseq,i+\dhinterseq}/\tepseq\right)-H_t\left(\bsX_{i-\dhinterseq,i+\dhinterseq}/\tepseq\right)\big)\\
&\phantom{=}+\mci(i,t)H_t\left(\bsX_{i-\dhinterseq,i+\dhinterseq}/\tepseq\right)
\bigg(
F_s\left(\bsX_{i-\dhinterseq,i+\dhinterseq}/\tepseq\right)
-F_t\left(\bsX_{i-\dhinterseq,i+\dhinterseq}/\tepseq\right)\bigg)\\
&=: T_1(i)+T_2(i)+T_3(i)
\;.
\end{align*}
Again, in this decomposition we can control monotonicity of each term. The argument now is very similar to that of Case 1. Hence, it is omitted.

Therefore, \eqref{eq:bound-on-the-first-term-case-1} is proved for $\anchor^{(j)}$, $j=0,1,2$.
\paragraph{Proof of \eqref{eq:bound-on-the-second-term-case-1}.}
We proceed similarly.
Recall that for $j\geq 1$ (cf. \eqref{eq:covariance-term}),
\begin{align}
\label{eq:tightness-calculation-second-term}
&\frac{1}{\dhinterseq \pr(\norm{\bsX_0}>\tepseq)}\esp[\left\{H_{s,n,0}^{\anchor}-H_{t,n,0}^{\anchor}\right\}
\left\{H_{s,n,j}^{\anchor}-H_{t,n,j}^{\anchor}\right\}]\nonumber\\
&
=
\frac{1}{\dhinterseq\pr(\norm{\bsX_0}>\tepseq)}
\esp\left[
\sum_{h=1}^{\dhinterseq}\Big\{ H_s^\anchor-H_t^\anchor\Big\}\left(\bsX_{h-\dhinterseq,h+\dhinterseq}/\tepseq\right)
\times
\sum_{i=j\dhinterseq+1}^{(j+1)\dhinterseq}\Big\{ H_s^\anchor-H_t^\anchor\Big\}\left(\bsX_{i-\dhinterseq,i+\dhinterseq}/\tepseq\right)
\right]
\nonumber
\\
&=\sum_{h=(j-1)\dhinterseq+1}^{j\dhinterseq}\left(\frac{h}{\dhinterseq}-(j-1)\right){g}_n(h,H^{\anchor}_s-H^{\anchor}_t) +
\sum_{h=j\dhinterseq+1}^{(j+1)\dhinterseq}\left((j+1)-\frac{h}{\dhinterseq}\right){g}_n(h,H^{\anchor}_s-H^{\anchor}_t)\nonumber \nonumber
\\
&\leq \sum_{h=(j-1)\dhinterseq+1}^{(j+1)\dhinterseq}{g}_n(h,H^{\anchor}_s-H^{\anchor}_t)=:I_{j}(s,t)
\end{align}
with the same $g_n$ as in
\eqref{eq:bound-gn-E}.

Write ${g}_n(h,G)$ as
\begin{align*}
&\frac{1}{\pr(\norm{\bsX_0}>\tepseq)}\cov[G(\bsX_{-\dhinterseq,\dhinterseq}/\tepseq),
G(\bsX_{h-\dhinterseq,h+\dhinterseq}/\tepseq)]+
\frac{1}{\pr(\norm{\bsX_0}>\tepseq)}\esp^2[G(\bsX_{-\dhinterseq,\dhinterseq}/\tepseq)
]\\
&=:\widetilde{g}_n(h,G)+\frac{1}{\pr(\norm{\bsX_0}>\tepseq)}\esp^2[G(\bsX_{-\dhinterseq,\dhinterseq}/\tepseq)
]
\;.
\end{align*}

For $h>2\dhinterseq$ we have by \eqref{eq:davydov-2}, %and \eqref{eq:bounds for g(h)},
\begin{align}%\label{eq:covariance-term-1}
|\widetilde{g}_n(h,H^{\anchor}_s-H^{\anchor}_t)|&\leq \frac{\constant}{\pr(\norm{\bsX_0}>\tepseq)} \beta_{h-2\dhinterseq}\;.
\end{align}
Thus,
\begin{align}\label{eq:bound-on-the-second-term-tail}
&\frac{1}{\dhinterseq \pr(\norm{\bsX_0}>\tepseq)}\sum_{j=4}^{z_n-1}\left(1-\frac{j}{z_n}\right)\esp[\left(H_{s,n,0}^{\anchor}-H_{t,n,0}^{\anchor}\right)\left(H_{s,n,j}^{\anchor}-H_{t,n,j}^{\anchor}\right)]\nonumber\\
&\leq
\frac{\constant}{\pr(\norm{\bsX_0}>\tepseq)}  \sum_{j=4}^{z_n-1}
\sum_{h=(j-1)\dhinterseq+1}^{(j+1)\dhinterseq}\beta_{h-2\dhinterseq}\nonumber \\
&\phantom{\leq}
+\frac{\constant}{\pr(\norm{\bsX_0}>\tepseq)}  z_n\dhinterseq \esp^2[\{H_s^\anchor-H_t^\anchor\}(\bsX_{-\dhinterseq,\dhinterseq}/\tepseq)]
\nonumber\\
&\leq \frac{\constant}{\pr(\norm{\bsX_0}>\tepseq)}  \sum_{h=3\dhinterseq+1}^{\infty}\beta_{h-2\dhinterseq}+\constant
z_n\dhinterseq \pr(\norm{\bsX_0}>s_0 \tepseq)
\nonumber\\
&=\frac{\constant}{\pr(\norm{\bsX_0}>\tepseq)}
\sum_{i=\dhinterseq+1}^{\infty}\beta_{i}+o(1)=o(1)
\end{align}
uniformly in $s,t\in [s_0,t_0]$.
In the last line we applied the assumption \eqref{eq:mixing-rates-runs}, and the assumption \eqref{eq:zn}.

The terms that correspond to $j=1,2,3$ in \eqref{eq:tightness-calculation-second-term} have to be dealt with separately. We note that $I_1(s,t)=\sum_{h=1}^{\dhinterseq} g_n(h,H_s^\anchor-H_t^\anchor)$ is bounded by the term in
\eqref{eq:decomposition-0-term-tightness}. Hence,
\begin{align}\label{eq:bound-on-the-second-term-1}
\lim_{n\to\infty}\sup_{s_0\leq s, t \leq t_0 \atop |s-t|\leq\delta_n} I_1(s,t)=0\;.
\end{align}
Next, using \eqref{eq:immediate-bound-on-gn} and \ref{eq:conditionS},
\begin{align}\label{eq:bound-on-the-second-term-2}
I_2(s,t)+I_3(s,t)\leq \constant \sum_{h=\dhinterseq+1}^{4\dhinterseq}g_n(h)\leq \constant \sum_{h=\dhinterseq+1}^{4\dhinterseq}
\pr(\norm{\bsX_0}>s_0\tepseq,\norm{\bsX_h}>s_0\tepseq)=0\;,
\end{align}
uniformly in $s,t\in [s_0,t_0]$.

Combination of \eqref{eq:bound-on-the-second-term-tail}, \eqref{eq:bound-on-the-second-term-1}, \eqref{eq:bound-on-the-second-term-2} finishes the proof of
\eqref{eq:bound-on-the-second-term-case-1}.

This, together with \eqref{eq:bound-on-the-first-term-case-1} concluded the proof of
\eqref{eq:continuity-l2}.

\subsection{Random entropy}\label{sec:random-entropy}
In this section we discuss validity of \Cref{hypo:entropy}. We cannot check this condition for arbitrary functionals $H$ and anchoring maps $\anchor$, however, we will see that the conditions is satisfied for most relevant cases considered in the paper.

Recall the class
$$\mcg = \{H^{\anchor}_s,s\in[s_0,t_0]\}=\{H(\bsx/s)\ind{\anchor(\bsx/s)=0}\ind{\norm{\bsx_0}>s},\;\;s\in[s_0,t_0]\}.$$ We start first with $H$ of the form
\begin{align}\label{eq:class-1}
H_{s}(\bsx)=\ind{K(\bsx)>s}\;,
\end{align}
where
$K:\Rset^\Zset\to \Rset$. This is the case of the functionals that lead to the extremal index, the large deviation index and the ruin index.

Since $\anchor^{(0)}$ is 0-homogeneous, it does not play a role in calculating the class entropy. Then
$$
H_{s}(\bsx)\ind{\norm{\bsx_0}>s}= \ind{\min\{K(\bsx),\norm{\bsx_0}\}>s}\;.
$$
Hence, the map $s\to H_{s}(\bsx)\ind{\norm{\bsx_0}>s}$ is decreasing. Therefore, VC($\mcg)=2$.

As for $\anchor^{(1)}$ we have
\begin{align*}
\ind{\anchor^{(1)}(\bsx/s)=0}=
\ind{ \bsx_{-\infty,-1}^*\leq s,\norm{\bsx_0}>s}\;.
\end{align*}
Thus,
\begin{align*}
H(\bsx/s)\ind{\anchor(\bsx/s)=0}\ind{\norm{\bsx_0}>s}=
\ind{\min\{K(\bsx),\norm{\bsx_0}\}>s}\ind{ \bsx_{-\infty,-1}^*\leq s}
\;.
\end{align*}
Now, the class
$$\mcf=\{(-\infty,s)\times(s,+\infty):s \in \bbR \}$$
has the VC-index $3$. By \cite[Example C.4.14]{kulik:soulier:2020} the class $\mcg$ has VC-index at most 3.

Similarly, for $\anchor^{(2)}$ we have
\begin{align*}
\ind{\anchor^{(2)}(\bsx/s)=0}=
\ind{ \bsx_{1,\infty}^*\leq s,\norm{\bsx_0}>s}\;.
\end{align*}
Thus,
\begin{align*}
H(\bsx/s)\ind{\anchor(\bsx/s)=0}\ind{\norm{\bsx_0}>s}=
\ind{\min\{K(\bsx),\norm{\bsx_0}\}>s}\ind{ \bsx_{1,\infty}^*\leq s}
\;
\end{align*}
and again
the class $\mcg$ has VC-index at most 3.

In summary, for functionals $H$ given in \eqref{eq:class-1} and the anchoring maps $\anchor^{(0)}$, $\anchor^{(1)}$, $\anchor^{(2)}$ the class $\mcg$ has the VC-index at most 3 and hence the random entropy \Cref{hypo:entropy} is satisfied.

Now, assume that the map $s\to H_s$ is decreasing. This is the case of (again) the extremal index, the large deviation index and the ruin index. This is also the case of the stop-loss index and the cluster size distribution. If we choose $\anchor=\anchor^{(0)}$, since $\anchor^{(0)}$ is $0$-homogeneous, the maps $s\to H_s^{\anchor}$ is also decreasing. Thus, the VC-index of $\mcg$ is at most 2. The random entropy condition is satisfied.

\bibliographystyle{plain}
%\bibliography{bib}

\newpage

\begin{table*}
	\centering
	\ra{1.5}
	\noindent\makebox[0.4\textwidth]{
		\begin{tabular}{@{}rrrrrcrrrrc@{}}\toprule \\
			& \multicolumn{4}{c}{\textcolor{blue}{$\rho=0.9$},$\;\;\;$\textcolor{red}{\text{Extremal Index}=$0.34$}} & \phantom{abc}& \multicolumn{4}{l}{\textcolor{blue}{$\rho=0.5$},$\;\:$ \textcolor{red}{\text{Extremal Index=} $0.94$}}\\
			\cmidrule{2-5} \cmidrule{7-10}
			($k$ $\,$\%)&\multicolumn{2}{c}{$k=5$} & \multicolumn{2}{c}{$k=10$} && \multicolumn{2}{c}{$k=5$}   & \multicolumn{2}{c}{$k=10$}  \\ \midrule
			\textcolor{purple}{$r_n=8\;$}\\
			Disjoint bl & \textbf{0.360} & (0.05)& \textbf{0.310 }& (0.03) && \textbf{0.680} & (0.05) & \textbf{0.570} &(0.03) \\
			Sliding bl &\textbf{ 0.348} & (0.04) & \textbf{0.308} & (0.03)&& \textbf{0.673}& (0.04)& \textbf{0.562}&(0.03)\\
			Runs $C^{(0)}$ &\textbf{ 0.240} & (0.05) & \textbf{0.190} & (0.03)&& \textbf{0.560}& (0.06)& \textbf{0.420}&(0.03)\\
			Runs $C^{(1)}$ &\textbf{ 0.220} & (0.05) & \textbf{0.170} & (0.03)&& \textbf{0.540}& (0.06)& \textbf{0.390}&(0.03)\\
			Runs $C^{(2)}$ &\textbf{ 0.220} & (0.05) & \textbf{0.170} & (0.03)&& \textbf{0.540}& (0.06)& \textbf{0.390}&(0.03)\\
			\textcolor{purple}{$r_n=9\;$}\\
			Disjoint bl & \textbf{0.340}& (0.05)& \textbf{0.290}& (0.03) && \textbf{0.655}& (0.05)& \textbf{0.540} & (0.03)\\
			Sliding bl & \textbf{0.340}& (0.04)& \textbf{0.309}& (0.03) && \textbf{0.672} & (0.04)& \textbf{0.539} & (0.03)\\
			Runs $C^{(0)}$ &\textbf{ 0.220} & (0.05) & \textbf{0.170} & (0.03)&& \textbf{0.520}& (0.05)& \textbf{0.370}&(0.03)\\
			Runs $C^{(1)}$ &\textbf{ 0.200} & (0.05) & \textbf{0.160} & (0.03)&& \textbf{0.500}& (0.05)& \textbf{0.340}&(0.03)\\
			Runs $C^{(2)}$ &\textbf{ 0.200} & (0.05) & \textbf{0.160} & (0.03)&& \textbf{0.500}& (0.05)&
			\textbf{0.340}&(0.03)\\
			\bottomrule
		\end{tabular}
	}
	\caption{The median and the variance (in brackets) of disjoint, sliding blocks and runs (with the anchoring maps $C^{(0)}, C^{(1)}$ and $C^{(2)}$) estimators for the extremal index. Data are simulated from AR(1) with $\alpha=4$, $\rho=0.5$ (thus, $\theta=0.94$), and $\rho=0.9$ (thus $\theta=0.34$). Block size $r_n=8,\;9$. The number of order statistics is $k=5\%$ and $10\%$ for a sample $n=1000$  based on $N = 1000$ Monte Carlo simulations.}
	\label{Summary table-AR-extremal}
\end{table*}

\begin{table*}
	\centering
	\ra{1.5}
	\noindent\makebox[0.4\textwidth]{
		\begin{tabular}{@{}rrrrrcrrrrc@{}}\toprule \\
			& \multicolumn{4}{c}{\textcolor{blue}{$\rho=0.9$},$\;\;\;$\textcolor{red}{\text{Stop-loss Index}=$0.085$}} & \phantom{abc}& \multicolumn{4}{l}{\textcolor{blue}{$\rho=0.5$},$\;\:$ \textcolor{red}{\text{Stop-loss Index=} $0.078$}}\\
			\cmidrule{2-5} \cmidrule{7-10}
			(k $\,$\%)&\multicolumn{2}{c}{$k=10$} & \multicolumn{2}{c}{$k=40$} && \multicolumn{2}{c}{$k=10$}   & \multicolumn{2}{c}{$k=40$}  \\ \midrule
			%			\textcolor{purple}{$r_n=7\;$}\\
			%			Disjoint bl & \textbf{0.0100} & (0.01)& \textbf{0.0150 }& (0.007) && \textbf{0.0400} & (0.02) & \textbf{0.0650} &(0.010) \\
			%			Sliding bl &\textbf{ 0.0100} & (0.01) & \textbf{0.0157} & (0.007)&& \textbf{0.0457}& (0.02)& \textbf{0.0650}&(0.009)\\
			%			Runs $C^{(0)}$ &\textbf{ 0.0200} & (0.01) & \textbf{0.0250} & (0.006)&& \textbf{0.0600}& (0.02)& \textcolor{red}{\textbf{0.0770}}&(0.008)\\			
			\textcolor{purple}{$r_n=8\;$}\\
			Disjoint bl & \textbf{0.0100}& (0.01)& \textbf{0.0175}& (0.007) && \textbf{0.0500}& (0.02)& \textbf{0.0675} & (0.009)\\
			Sliding bl & \textbf{0.0113}& (0.01)& \textbf{0.0181}& (0.007) && \textbf{0.0457} & (0.02)& \textbf{0.0678} & (0.008)\\
			Runs $C^{(0)}$ &\textbf{ 0.0200} & (0.01) & \textbf{0.0250} & (0.006)&& \textbf{0.0600}& (0.02)& \textcolor{red}{\textbf{0.0775}}&(0.007)\\
			\textcolor{purple}{$r_n=9\;$}\\
			Disjoint bl & \textbf{0.0100}& (0.01)& \textbf{0.0200}& (0.008) && \textbf{0.0500}&  (0.02)& \textbf{0.0700} & (0.009)\\
			Sliding bl & \textbf{0.0122}& (0.01)& \textbf{0.0197}&(0.007) && \textbf{0.0478} &  (0.02) &\textbf{0.0694} & (0.008)\\
			Runs $C^{(0)}$ &\textbf{ 0.0200} & (0.01) & \textbf{0.0250} & (0.005)&& \textbf{0.0600}& (0.02)& \textcolor{red}{\textbf{0.0750}}&(0.008)\\
			%			\textcolor{purple}{$r_n=10\;$}\\
			%			Disjoint bl & \textbf{0.0100}& (0.01)& \textbf{0.0225}& (0.007) && \textbf{0.0500}& (0.02)&\textbf{ 0.0725} & (0.009)\\
			%			Sliding bl & \textbf{0.0140}& (0.01)& \textbf{0.0213}&(0.007) && \textbf{0.0509} &  (0.02)&\textbf{0.0718} & (0.008)\\
			%			Runs $C_0$ &\textbf{ 0.0200} & (0.01) & \textbf{0.0250} & (0.005)&& \textbf{0.0600}& (0.02)& \textbf{0.0750}&(0.007)\\
			\bottomrule
		\end{tabular}
	}
	\caption{The median and the variance (in brackets) of disjoint, sliding blocks and runs ($C^{(0)}$) estimators for stop-loss index with $S=0.9$. Data are simulated from AR(1) with $\alpha=4$, $\rho=0.5,\;=0.9$. The block size is $r_n=8,\;9$. The number of order statistics is $k=10\%$ and $40\%$ for a sample $n=1000$  based on $N = 1000$ Monte Carlo simulations.}
	\label{Summary table stop loss}
\end{table*}

\begin{table*}
	\centering
	\ra{1.5}
	\noindent\makebox[0.4\textwidth]{
		\begin{tabular}{@{}rrrrrcrrrrc@{}}\toprule \\
			& \multicolumn{3}{c}{\textcolor{red}{\text{Extremal Index}=$0.612$}} & \phantom{abc}&\\
			\cmidrule{2-5} \cmidrule{7-10}
			($k$ $\,$\%)&\multicolumn{2}{c}{$k=5$} & \multicolumn{2}{c}{$k=10$} &&\\ \midrule
			%			\textcolor{purple}{$r_n=7\;$}\\
			%			Disjoint bl & \textbf{0.660} & (0.06)& \textbf{0.620 }& (0.04) && \\
			%			Sliding bl &\textbf{ 0.666} & (0.06) & \textbf{0.620} & (0.03)&& \\
			%			Runs $C_0$ & \textbf{0.520}& (0.06)& \textbf{0.440}&(0.04) &&\\
			%			Runs $C_1$ & \textbf{0.520}& (0.06)& \textbf{0.410}&(0.04) &&\\
			%	     	Runs $C_2$ & \textbf{0.520}& (0.06)& \textbf{0.410}&(0.04) &&\\
			\textcolor{purple}{$r_n=8\;$}\\
			Disjoint bl & \textbf{0.660}& (0.06)& \textbf{0.600}& (0.04) &&\\
			Sliding bl & \textbf{0.648}& (0.06)& \textbf{0.593}& (0.03) && \\
			Runs $C^{(0)}$ &\textbf{ 0.520} & (0.06) & \textbf{0.410} & (0.03)&& \\
			Runs $C^{(1)}$ & \textbf{0.500}& (0.06)& \textbf{0.380}&(0.03) &&\\
			Runs $C^{(2)}$ & \textbf{0.500}& (0.06)& \textbf{0.380}&(0.03) &&\\			
			\textcolor{purple}{$r_n=9\;$}\\
			Disjoint bl & \textbf{0.640}& (0.06)& \textbf{0.570}& (0.04) && \\
			Sliding bl &\textbf{ 0.630} & (0.06) & \textbf{0.569} & (0.03)&& \\
			Runs $C^{(0)}$ & \textbf{0.500}& (0.06)& \textbf{0.380}&(0.03) &&\\
			Runs $C^{(1)}$ & \textbf{0.480}& (0.06)& \textbf{0.340}&(0.03) &&\\
			Runs $C^{(2)}$ & \textbf{0.480}& (0.06)& \textbf{0.340}&(0.03) &&\\						
			%			\textcolor{purple}{$r_n=10\;$}\\
			%			Disjoint bl & \textbf{0.620}& (0.06)& \textbf{0.550}& (0.04) &&\\
			%			Sliding bl & \textbf{0.616}& (0.05)& \textbf{0.546}&(0.03) &&
			%			\\
			%			Runs $C_0$ &\textbf{ 0.480} & (0.06) & \textbf{0.360} & (0.03)&& \\
			%			Runs $C_1$ & \textbf{0.460}& (0.06)& \textbf{0.320}&(0.03) &&\\
			%			Runs $C_2$ & \textbf{0.460}& (0.06)& \textbf{0.320}&(0.03) &&\\					
			\bottomrule
			
		\end{tabular}
	}
	\caption{The median and the variance (in brackets) of disjoint, sliding blocks and runs ($C^{(0)},\;C^{(1)}$ and $C^{(2)}$)  estimators for the extremal index in ARCH(1) model with $\lambda=0.9$. The block size is $r_n=8,\;9$. The number of order statistics is $k=5\%$ and $10\%$ for a sample $n=1000$  based on $N = 1000$ Monte Carlo simulations.}
	\label{Summary table-ARCH-extremal}
\end{table*}

\begin{figure}[!ht]
	\centering
	\includegraphics[width=9cm,height=5cm]{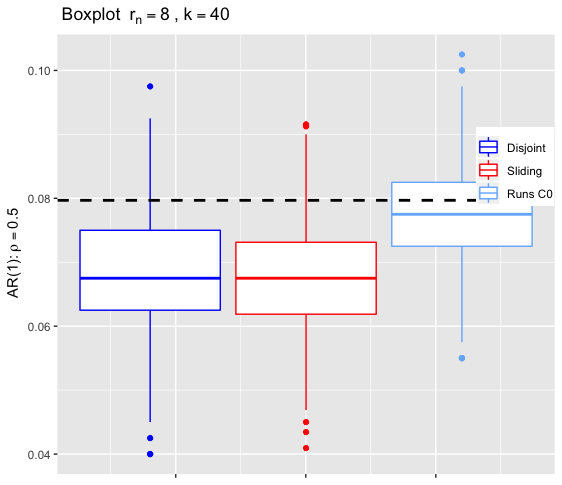}
	\includegraphics[width=9cm,height=5cm]{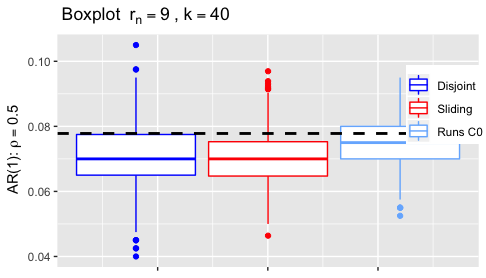}
	\caption{Monte Carlo simulations of runs estimator ($C^{(0)}$) for stop-loss index with $S=0.9$. Data are simulated from AR(1) with $\rho=0.5$ and $r_n=8$  (left panel), $\rho=0.9$ and $r_n=9$ (right panel), $\theta=0.078$ $\alpha=4$ and the number of order statistics $k=40$. Dotted lines indicated the true value of the cluster index.}
	\label{plot:box plots}
\end{figure}

\end{document}